\newtheorem{thm}{Theorem}[section]
\newtheorem{lem}[thm]{Lemma}
\newtheorem{prop}[thm]{Proposition}
\newtheorem{fact}[thm]{Fact}
\newtheorem{example}[thm]{Example}
\theoremstyle{definition}
\theoremstyle{remark}
\newtheorem{rem}[thm]{Remark}
\numberwithin{figure}{section}
\numberwithin{equation}{section}
\newcommand{\R}{\mathbb R}  
\newcommand{\Z}{\mathbb Z}  
\renewcommand{\Im}{{\mathrm{Im\,}}}  
\newcommand{\C}{\mathbb C}  
\renewcommand{\H}{\mathbb H}  
\newcommand{\rightsetse}[1]{%
\hidewidth\rotatebox[origin=c]{-45}{$\xrightarrow{\kern2em}$} 
     \rlap{\raisebox{1ex}
     {$\kern-.8em\scriptstyle #1$}}\hidewidth}
\newcommand{\rightsetsw}[1]{%
\hidewidth\rotatebox[origin=c]{45}{$\xleftarrow{\kern2em}$} 
     \rlap{\raisebox{.1ex}
     {$\kern-.8em\scriptstyle #1$}}\hidewidth}
\newcommand{\leftsetsw}[1]{%
\hidewidth
     \llap{\raisebox{1ex}
     {$\scriptstyle #1$\kern-.8em}}
    \rotatebox[origin=c]{45}{$\xleftarrow{\kern2em}$}\hidewidth}
\newcommand{\rightsetnw}[1]{%
\hidewidth\rotatebox[origin=c]{135}{$\xrightarrow{\kern2em}$} 
     \rlap{\raisebox{1ex}
     {$\kern-.8em\scriptstyle #1$}}\hidewidth}
\newcommand{\rightsetd}[1]{%
\hidewidth\rotatebox[origin=c]{-90}{$\xrightarrow{\kern2em}$} 
     \rlap{{$\scriptstyle #1$}}\hidewidth}
\begin{document}
\title[Geometric analysis on small representations of $GL(N,\R)$]{Geometric analysis on small unitary representations of $GL(N,\R)$
}
\author{Toshiyuki Kobayashi, Bent \O rsted, Michael Pevzner}
\begin{abstract}
The most degenerate unitary principal series representations $\pi_{i\lambda,\delta}$ ($\lambda\in \R,\,\delta\in\mathbb Z/2\mathbb Z$)
of $G = GL(N,\R)$ attain the minimum of the Gelfand--Kirillov
dimension among all irreducible unitary representations of $G$.
This article gives an explicit formula of the irreducible
decomposition of the restriction $\pi_{i\lambda,\delta}|_H$ (\textit{branching law})
with respect to all symmetric pairs $(G,H)$.
For $N=2n$ with $n \ge 2$, the restriction
$\pi_{i\lambda,\delta}|_H$ remains irreducible for $H=Sp(n,\R)$
if $\lambda\ne0$
and splits into two irreducible representations if
$\lambda=0$.
The branching law of the restriction $\pi_{i\lambda,\delta}|_H$ is purely
discrete for $H = GL(n,\C)$,
consists only of continuous spectrum for
$H = GL(p,\R) \times GL(q,\R)$ $(p+q=N)$,
and contains
 both
discrete and continuous spectra for $H=O(p,q)$ $(p>q\ge1)$. Our emphasis is laid on geometric analysis, which arises from the restriction of `small representations' to various subgroups.
\\

Key words and phrases: \emph{small representation, branching
law, symmetric pair, reductive group, phase space representation,
symplectic group, degenerate principal series representations.}
\end{abstract}
\renewcommand{\thefootnote}{}
\renewcommand{\footnotemark}{}
\footnote { 2010 MSC:
Primary 22E45; 
Secondary
  22E46, 
30H10, 
47G30, 
53D50.
}
\maketitle 
\section{Introduction}
%
The subject of our study is geometric analysis on `small
representations' of $GL(N,\R)$ through branching problems to
non-compact subgroups.

Here, by a branching problem, we mean a general question on the understanding how irreducible
representations of a group decompose when restricted to a
subgroup. A classic example
 is studying the irreducible decomposition
 of the tensor product of two representations.
Branching problems are one of the most basic problems
 in representation theory,
 however,
 it is hard in general
 to find explicit branching laws
 for unitary representations of non-compact reductive groups.
For reductive symmetric spaces
$G/H$, the multiplicities in the Plancherel formula 
 of
$L^2(G/H)$ are finite \cite{BS,Delorme},
whereas the multiplicities in the branching laws for the restriction
$G \downarrow H$ are often infinite even when $(G,H)$ are symmetric pairs
 (see \emph{e.g.} \cite{K02} for recent developments and open problems
 in this area).

Our standing point is that `\emph{small representations' of a group should have `large
symmetries' in the representation spaces}, as was advocated by one of the
authors from the
perspectives in global analysis \cite{xsato}.
In particular,
considering the restrictions of \lq small representations\rq\ to reasonable subgroups,
we expect that their breaking symmetries should have still fairly
large symmetries,
for which geometric analysis would deserve finer study.

Then, what are `small representations'?
For this, the Gelfand--Kirillov dimension serves as a coarse measure of
the `size' of infinite dimensional representations.
We recall that for an irreducible unitary representation $\pi$ of a real reductive Lie
group $G$ the Gelfand--Kirillov dimension $\mathrm{DIM}(\pi)$
 takes the value in the set of half the dimensions
 of nilpotent orbits in the Lie algebra ${\mathfrak {g}}$.
We may think of $\pi$ as one of the `smallest' infinite dimensional
 representations of $G$,
 if $\mathrm{DIM}(\pi)$ equals $n(G)$, half the dimension
 of the minimal nilpotent orbit.

For the metaplectic group $G=Mp(m,{\mathbb{R}})$,
 the connected two-fold covering group of the symplectic group $Sp(m,{\mathbb{R}})$ of rank $m$,
 the Gelfand--Kirillov dimension attains its minimum
 $n(G) = m$
 at the Segal--Shale--Weil representation.
For the indefinite orthogonal group $G=O(p,q)$
 ($p,q>3$), there exists $\pi$ such that
$\mathrm{DIM}(\pi) = n(G) \ ( = p+q-3)$
 if and only if $p+q$ is even according to an algebraic result of Howe and Vogan.
See \emph{e.g.}\ a survey paper \cite{xgansavin} for the algebraic
 theory of `minimal representations',
and \cite{Foll, xsato,KobOr2,KM-intopq}
 for their analytic aspects.

In general,
a real reductive Lie group $G$ admits at most finitely many irreducible unitary representations
 $\pi$ with $\mathrm{DIM}(\pi)=n(G)$
if
 the complexified Lie algebra ${\mathfrak {g}}_{\mathbb{C}}$
 does not contain a simple factor of type $A$ (see \cite{xgansavin}).
In contrast,
 for $G=GL(N,{\mathbb{R}})$,
 there exist infinitely many irreducible unitary representations $\pi$
 with $\mathrm{DIM}(\pi)=n(G) \ (=N-1)$.
For example,
the unitarily induced representations
\begin{equation}\label{eqn:indGL}
\pi_{i\lambda,\delta}^{GL(N,\R)} := \displaystyle{\mathrm{Ind}}_{P_N}^{GL(N,\R)}(\chi_{i\lambda,\delta})
\end{equation}
from a unitary character $\chi_{i\lambda,\delta}$
 of a maximal parabolic subgroup
\begin{equation}\label{eqn:PN}
    P_N:=(GL(1,{\mathbb{R}}) \times GL(N-1, {\mathbb{R}})) \ltimes {\mathbb{R}}^{N-1},
\end{equation}
 are such representations with parameter $\lambda\in \R$ and $\delta \in \Z/2\Z$.
\vskip10pt

In this paper,
 we find the irreducible decomposition
 of these \lq{small} representations\rq\ $\pi_{i \lambda, \delta}^{GL(N,\R)}$
 with respect to \emph{all symmetric pairs}.

We recall that a pair of Lie groups $(G,H)$ is said to be a
symmetric pair if there exists an involutive automorphism
$\sigma$ of $G$ such that $H$ is an open subgroup of 
$G^\sigma := \{g \in G: \sigma g=g \}$. 
According to M. Berger's classification \cite{Be}, the following
subgroups $H=K,\,G_j\,(1\leq j\leq 4)$ and $G$ exhaust all
symmetric pairs $(G,H)$ for $G=GL(N,\R)$ up to local isomorphisms and the center of
$G$:
 { \allowdisplaybreaks
\begin{alignat*}{3}
&K &&:= O(N) && \text{(maximal compact subgroup),}
\\
&G_1 &&:= Sp(n,\R) &&(N=2n),
\\
&G_2 &&:= GL(n,\C) &&(N=2n),
\\
&G_3 &&:= GL(p,\R) \times GL(q,\R)
\quad
&&(N=p+q),
\\
&G_4 &&:= O(p,q)   &&(N=p+q).
\end{alignat*}
}%

It turns out that the branching laws for the restrictions of
$\pi_{i\lambda,\delta}^{GL(N,\R)}$ with respect to these subgroups
behave nicely in all the cases,
and in particular, the multiplicities of irreducible representations
in the branching laws are
uniformly bounded.

To be more specific, the restriction of
$\pi_{i\lambda,\delta}^{GL(N,\R)}$ to $K$ splits discretely into the
space of spherical harmonics on $\R^N$, and the resulting $K$-type
formula is multiplicity-free and so called of ladder type.
For the non-compact subgroups $G_j\,(1\leq j\leq 4)$,
we prove the following irreducible decompositions in Theorems \ref{thm:2}, \ref{thm:3}, \ref{thm:6.1} and \ref{thm:5}:
\begin{thm}\label{thm:global}
 For $\lambda\in\R$ and $\delta\in\mathbb Z/2\mathbb Z$, the irriducible unitary representation $\pi_{i\lambda,\delta}^{GL(N,\R)}$
decomposes when restricted to symmetric pairs as follows:
\begin{enumerate}
 \item[1)] $GL(2n,\R)\downarrow Sp(n,\R)$,
$(n\geq2)$:
$$
\pi_{i\lambda,\delta}^{GL(2n,\R)}\Big|_{G_1}\simeq\left\{\begin{array}{ll}
                                                            {\mathrm{Irreducible}} & (\lambda\neq0), \\
                                                            \left(\pi_{0,\delta}^{Sp(n,\R)}\right)^+
                                                            \oplus\left(\pi_{0,\delta}^{Sp(n,\R)}\right)^-
                                                            &(\lambda=0).
                                                          \end{array}\right.
$$

  \item[2)] $GL(2n,\R) \downarrow GL(n,\C)$:
$$
\pi_{i\lambda,\delta}^{GL(2n,\R)}\Big|_{G_2}\simeq
\sideset{}{^\oplus}\sum_{m\in2\mathbb Z+\delta}
\pi_{i\lambda,m}^{GL(n,\mathbb C)}.
$$
  \item[3)] $GL(p+q,\R)\downarrow GL(p,\R)\times GL(q,\R):$
$$
\pi_{i\lambda,\delta}^{GL(p+q,\R)}\Big|_{G_3}\simeq
\sum_{\delta'\in\mathbb Z/2\mathbb Z}\int_\R^\oplus
\pi_{i\lambda',\delta'}^{GL(p,\R)}\boxtimes
\pi_{i(\lambda-\lambda'), \delta-\delta'}^{GL(q,\R)}d\lambda'.
$$
  \item[4)] $GL(p+q,\R)\downarrow O(p,q):$
 $$
\pi_{i\lambda,\delta}^{GL(p+q,\R)}\Big|_{G_4}\simeq
\sideset{}{^\oplus}\sum_{\nu \in A_+^\delta(p,q)}
\pi_{+,\nu}^{O(p,q)}\oplus \sideset{}{^\oplus}\sum_{\nu \in
A_+^\delta(q,p)}\pi_{-,\nu}^{O(p,q)}\oplus
2\int_{\R_+}^\oplus\pi_{i\nu,\delta}^{O(p,q)}d\nu.
$$
\end{enumerate}
\end{thm}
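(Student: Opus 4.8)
The plan is to realize the representation $\pi_{i\lambda,\delta}^{GL(N,\R)}$ on a concrete model well-suited to each subgroup, and then perform harmonic analysis adapted to the subgroup action. The key point, which must be established first, is an \emph{$L^2$-model} (a ``phase space'' or conformal model) for the degenerate principal series: since $\mathrm{DIM}(\pi_{i\lambda,\delta})=N-1$ is minimal, the representation can be realized on $L^2$ of an $(N-1)$-dimensional space. Concretely I would use the noncompact picture, realizing $\pi_{i\lambda,\delta}^{GL(N,\R)}$ on (a completion of) $C^\infty(\R^{N-1})$ via functions on the open Bruhat cell, or better, on the isotropic cone model $\Xi = \{\xi \in \R^N \setminus\{0\}\}/\R_{>0}^\times$ with an appropriate $\frac12$-density twist, so that the inner product is the natural $GL(N,\R)$-quasi-invariant pairing. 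The Euclidean Fourier transform on $\R^{N-1}$ then intertwines the ``position'' model with a ``momentum'' model, and this flexibility is what makes four quite different branching problems tractable from one source.

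The four cases are then handled by four different devices, but each reduces to a classical decomposition of $L^2$ on a homogeneous space. For $G_3 = GL(p,\R)\times GL(q,\R)$: in the noncompact picture the subgroup acts on $\R^{N-1}\cong \R^{p}\times \R^{q-1}$ (or a linear slice thereof) essentially by the tensor product of two copies of the $GL$-action, so a partial Mellin transform in a scaling variable diagonalizes the $GL(1)$-part and exhibits the direct integral over $\lambda'$ with the ``conservation law'' $\lambda=\lambda'+(\lambda-\lambda')$, $\delta = \delta'+(\delta-\delta')$; this is the purely continuous case and is the most straightforward. For $G_2 = GL(n,\C)$: here the relevant space $\R^{2n-1}$ is, up to a cone/scaling, a circle bundle and the restriction is governed by the decomposition of $L^2(S^1)$ into characters $e^{im\theta}$, giving the discrete sum over $m \in 2\Z+\delta$; the main work is matching each Fourier mode with the degenerate principal series $\pi_{i\lambda,m}^{GL(n,\C)}$ of the smaller group, which I would do by computing the action of $\mathfrak{gl}(n,\C)$ on each isotypic component. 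For $G_4 = O(p,q)$: the subgroup acts on the model space $S^{p-1}\times S^{q-1}$ (the ``conformal'' picture), and the branching law is read off from the spectral decomposition of an operator like $\Delta_{S^{p-1}} - \Delta_{S^{q-1}}$ acting on $L^2(S^{p-1}\times S^{q-1})$ — its discrete spectrum produces the two ladders indexed by $A_+^\delta(p,q)$ and $A_+^\delta(q,p)$ and its continuous spectrum produces the direct integral of complementary/principal series $\pi_{i\nu,\delta}^{O(p,q)}$, with the multiplicity $2$ coming from the even/odd eigenfunctions; this is the analytically richest case.

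For $G_1 = Sp(n,\R)$ with $n\ge 2$, the mechanism is different and this is where I expect the main obstacle. The point is that $Sp(n,\R)\subset GL(2n,\R)$ acts \emph{transitively} on the relevant model space $\R^{2n}\setminus\{0\}$ modulo dilation (equivalently, $Sp(n,\R)$ has an open orbit that exhausts the support), so there is no room for a continuous or discrete decomposition parameter: the restriction must stay irreducible generically. To prove irreducibility for $\lambda\ne 0$ I would show that the restricted representation has no nontrivial closed invariant subspace by exhibiting a cyclic vector and using the fact that the $Sp(n,\R)$-action together with multiplication operators generates enough of the algebra of operators — concretely, identify the restriction with the Segal--Shale--Weil (oscillator) representation of $Mp(n,\R)$ tensored by a character, or with a known irreducible building block, and invoke its irreducibility. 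For $\lambda=0$ the intertwining operator (Knapp--Stein / the $c$-function normalization) becomes an involution commuting with the $Sp(n,\R)$-action, and its $\pm 1$-eigenspaces give the two summands $(\pi_{0,\delta}^{Sp(n,\R)})^{\pm}$; one must check these are each irreducible and inequivalent, which I would again do via the oscillator-representation identification. The genuinely hard step is the transitivity/irreducibility argument: controlling that the commutant of $\pi_{i\lambda,\delta}^{GL(2n,\R)}|_{Sp(n,\R)}$ is trivial (resp. two-dimensional) requires either a precise orbit analysis on the flag variety $GL(2n,\R)/P_{2n}$ under $Sp(n,\R)$ — showing there is a unique open dense orbit and that boundary orbits are negligible — or an explicit spectral-theoretic identification of the restricted representation with a representation already known to be irreducible; I would pursue the latter, since the oscillator representation of $Sp(n,\R)$ is exactly the ``small'' representation one expects to appear, and its restriction theory is classical.
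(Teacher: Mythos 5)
Your overall strategy --- pass to the noncompact ($N$-picture) or isotropic-cone model of $\pi_{i\lambda,\delta}^{GL(N,\R)}$, decompose the base space $G/P\simeq \mathbb{P}^{N-1}\R$ into open orbits of the subgroup $H$, and then perform harmonic analysis (Mellin transform, Fourier series on a circle, spectral theory of an invariant operator) along the fibres --- is essentially the paper's approach, and your treatments of $G_3=GL(p,\R)\times GL(q,\R)$ (Mellin transform along an $\R^\times$-fibre) and $G_2=GL(n,\C)$ (Fourier series along an $S^1/\{\pm1\}$-fibre) coincide with the paper's in substance.

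There are, however, two genuine gaps. First, for $G_4=O(p,q)$ you propose to work on $L^2(S^{p-1}\times S^{q-1})$ with an operator such as $\Delta_{S^{p-1}}-\Delta_{S^{q-1}}$; this is the model for the \emph{minimal} representation of $O(p,q)$, realized on the projectivized null cone of the quadratic form, on which $O(p,q)$ does not act by honest translations of $L^2$. Here the relevant geometry is different: $O(p,q)$ has two \emph{open dense} orbits $\mathcal{O}_4^\pm$ on $\mathbb{P}^{p+q-1}\R$, namely $\{[x]: Q(x)>0\}$ and $\{[x]: Q(x)<0\}$, which are (quotients of) the hyperboloids $X(p,q)_\pm$. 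The restriction $\pi_{i\lambda,\delta}|_{O(p,q)}$ is therefore unitarily equivalent to $L^2(X(p,q)_+)_\delta\oplus L^2(X(p,q)_-)_\delta$ --- notably independent of $\lambda$ --- and the branching law is read off from the Plancherel formula for these hyperboloids (Faraut, Strichartz): discrete series on $X(p,q)_\pm$ give the two ladders $A_+^\delta(p,q)$ and $A_+^\delta(q,p)$, while the common continuous spectrum occurs once in each, producing the multiplicity two. Your spectral-operator instinct is sound but you have the wrong space; if you run the analysis on $S^{p-1}\times S^{q-1}$ you will decompose the wrong unitary representation.

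Second, and more serious, for $G_1=Sp(n,\R)$: you are right that $Sp(n,\R)$ acts transitively on $\mathbb{P}^{2n-1}\R$, so the restriction is the degenerate principal series $\pi_{i\lambda,\delta}^{Sp(n,\R)}$ induced from the maximal parabolic with Levi $GL(1,\R)\times Sp(n-1,\R)$; and your idea that the normalized Knapp--Stein operator at $\mu=0$ is an involution whose $\pm1$-eigenspaces are the two summands is exactly right. But your proposed method for proving irreducibility --- ``identify the restriction with the Segal--Shale--Weil representation of $Mp(n,\R)$ tensored by a character'' --- cannot work: the oscillator representation has Gelfand--Kirillov dimension $n$, whereas $\pi_{i\lambda,\delta}^{Sp(n,\R)}$ has Gelfand--Kirillov dimension $2n-1$, so these are not the same object, nor is one a twist of the other. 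The oscillator representation does enter, but indirectly: the paper restricts further to the parabolic $P=MA\overline N\subset Sp(n,\R)$ whose unipotent radical is the Heisenberg group $H^{2n-1}$, applies a fibrewise Weyl quantization $\mathrm{Op}_{\tau_\rho}$ (where the oscillator representation of $Mp(n-1,\R)$ and the Schr\"odinger representation appear on the target Hilbert--Schmidt space), and shows that $\pi_{0,\delta}|_{MAN}$ breaks into exactly four irreducible pieces, namely the $W_\pm$-valued Hardy spaces $\mathcal{H}_\pm^2(W_\pm)$. Irreducibility of each Knapp--Stein eigenspace then follows by showing that no single Hardy-space summand is $G_1$-stable (checked on $K$-finite vectors such as $h^+$ and $h_b^-$ using multiplicity-freeness of the $K$-types $\mathcal{H}^{\alpha,\beta}(\C^n)$). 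Without an argument of this kind --- controlling the commutant by exhibiting a maximal parabolic restriction that is already near-irreducible --- the irreducibility claim for the summands $\pi_{0,\delta}^\pm$ is unsupported.
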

Here, each summand in the right-hand side stands for (pairwise inequivalent) irreducible
representations of the corresponding subgroups which will be defined explicitly in Sections 
\ref{sec:prf}, \ref{sec:glnc}, \ref{sec:glpq} and \ref{sec:opq}.

As indicated above, we see that the representation
$\pi_{i\lambda,\delta}^{GL(2n,\R)}$ remains generically irreducible
when restricted to the subgroup $G_1=Sp(n,\R)$
and splits into a direct sum of two irreducible subrepresentations
for $\lambda=0$ and $n>1$. The case $n=1$ is well known (\emph{cf}. \cite{B47}): 
the group $Sp(1,\mathbb{R})$ is
isomorphic to $SL(2,\mathbb{R})$, and
$\pi_{i\lambda,\delta}$ are
irreducible except for $(\lambda,\delta)=(0,1)$, while $\pi_{0,1}$
splits into the direct sum of two irreducible unitary
representations i.e.\ the (classical) Hardy space and its dual.

 The representation
$\pi_{i \lambda, \delta}^{GL(2n,\R)}$
 is discretely decomposable in the sense of \cite{xkAnn98}
when restricted to the subgroup $G_2=GL(n,\C)$. In other words, the non-compact group $G_2$ behaves in
the representation space of $\pi_{i\lambda,\delta}^{GL(2n,\R)}$ as
if it were
 a compact subgroup.
In contrast, the restriction of $\pi_{i \lambda,
\delta}^{GL(p+q,\R)}\,$
 to  another subgroup $G_3=GL(p,{\mathbb{R}})\times GL(q, {\mathbb{R}})\,$
decomposes without discrete spectrum,
while both discrete and continuous spectra appear for the
restriction of $\pi_{i\lambda, \delta}^{GL(p+q,\R)}$ to $G_4=O(p,q)$
if $p,q\ge1$ and $(p,q) \ne (1,1)$.
  Finally, in Theorem \ref{thm:MetMet} we give an irreducible
  decomposition of the tensor product of
  the Segal--Shale--Weil representation with its dual, giving another example of explicit branching laws of small representations with respect to symmetric pairs.

We have stated Theorem \ref{thm:global} from representation theoretic
 viewpoint. 
 However, our emphasis is not only on results of this nature but also on 
geometric analysis of concrete models via branching laws of  small
 representations, which we find surprisingly
rich in its interaction with  various domains of
classical analysis and their new aspects. It
 includes
the theory of Hilbert-space valued Hardy spaces (Section 2), the Weyl operator calculus (Section 3),
representation theory of Jacobi
and Heisenberg groups, the Segal--Shale--Weil
representation
of the metaplectic group (Section 4), (complex) spherical
harmonics (Section 5), the $K$-Bessel functions (Section \ref{sec:minK}),
and global analysis on space forms of indefinite-Riemannian manifolds (Section \ref{sec:opq}).

Further, we introduce a \emph{non-standard} $L^2$-model for the degenerate
principal series representations of $Sp(n,\R)$ where the Knapp--Stein
intertwining operator becomes an \emph{algebraic operator} (Theorem
\ref{thm:KNalg}). In this model the minimal $K$-types are given  in
terms of Bessel functions
(Proposition \ref{prop:FFF}). The two irreducible components
$\pi_{0,\delta}^\pm$ at $\lambda=0$ in Theorem \ref{thm:global} 1) will be
presented in three ways, that is, in terms of
Hardy spaces based on the Weyl operator calculus as giving the
$P$-module structure,
complex spherical harmonics as giving the $K$-module structure,
and the eigenspaces of the Knapp--Stein intertwining operators
 (see Theorem \ref{thm:twosum}).\vskip10pt

The authors are grateful to an anonymous referee for bringing the papers of
Barbasch \cite{Barb} and Farmer \cite{Farmer} to our attention.
\medskip

\textbf{Notation:}
$\mathbb{N} = \{0,1,2,\dotsc\}$,
$\mathbb{N}_+ = \{1,2,3,\dotsc\}$,
${\mathbb{R}}_{\pm}=\{\rho \in {\mathbb{R}}: \pm \rho \ge0\}$,
$\mathbb{R}^\times = \mathbb{R} \setminus \{0\}$, and
$\mathbb{C}^\times = \mathbb{C} \setminus \{0\}$.

\section{Hilbert space valued Hardy space}\label{sec:2}
\numberwithin{equation}{section}

Let $W$ be a (separable) Hilbert space.
Then,
we can define the Bochner integrals
of weakly measurable functions
on $\R$ with values in $W$.
For a measurable set $E$ in ${\mathbb{R}}$,
 we denote by $L^2(E,W)$ the Hilbert space
 consisting of $W$-valued square integrable functions on $E$.
Clearly, it is a closed subspace of $L^2(\R,W)$.

Suppose $F$ is a $W$-valued function
 defined on an open subset in ${\mathbb{C}}$.
We say $F$ is holomorphic
 if the scalar product $(F,w)_W$ is a holomorphic function for any $w \in W$.

Let $\Pi_+$ be the upper half plane
$\{z=t+iu\in\mathbb{C}: u=\Im z > 0\}$.
Then, the $W$-valued Hardy space is defined as
\begin{equation}\label{eqn:H}
     {\mathcal {H}}_+^2(W):=\{F : \Pi_+ \to W: 
     \text{$F$ is holomorphic and $||F||_{{\mathcal {H}}_+^2(W)}<\infty $}\},
\end{equation}
where the norm $\|F\|_{\mathcal{H}_+^2}(W)$ is given by
$$
     ||F||_{{\mathcal {H}}_+^2(W)}:=
     \left(\sup_{u >0} \int_{\mathbb{R}} ||F(t+i u)||_W^2d t\right)^{\frac 1 2}.
$$

Similarly,
${\mathcal{H}}_-^2(W)$ is defined by replacing
 $\Pi_+$ with the lower half plane $\Pi_-$.
Notice that ${\mathcal {H}}_+^2(W)$ is the classical Hardy space, if
$W={\mathbb{C}}$.\\

Next,
we define the $W$-valued Fourier transform ${\mathcal {F}}$ as
\begin{equation*}
     {\mathcal {F}}: L^2({\mathbb{R}},W) \to L^2({\mathbb{R}}, W),
\quad
                        f (t) \mapsto ({\mathcal {F}}f)(\rho) := \int_{\mathbb{R}}f(t) e^{-2\pi i \rho t}\,dt.
\end{equation*}
Here, the Bochner integral converges for $f \in (L^1 \cap
L^2)(\mathbb{R},W)$ with obvious notation.
Then, $\mathcal{F}$ extends to the Hilbert space
$L^2(\mathbb{R},W)$ as a unitary isomorphism.
\begin{example}\label{ex:FRk}
Suppose
$W=L^2(\mathbb{R}^k)$ for some $k$.
 Then, we have a natural unitary
isomorphism $L^2(\mathbb{R},W) \simeq L^2(\mathbb{R}^{k+1})$.
Via this isomorphism,
the $L^2(\R^k)$-valued Fourier transform
 $\mathcal{F}$ is identified with the partial Fourier
transform $\mathcal{F}_t$ with respect to the first variable $t$ as
follows:
\begin{equation}\label{eqn:FLk}
\begin{matrix}
  L^2(\mathbb{R},L^2(\mathbb{R}^k))
  & \underset{\mathcal{F}}{\stackrel{\sim}{\to}}
  & L^2(\mathbb{R},L^2(\mathbb{R}^k))
  \\
  |\wr && |\wr
  \\[1ex]
  L^2(\mathbb{R}^{k+1})
  & \underset{\mathcal{F}_t}{\stackrel{\sim}{\to}}
  & L^2(\mathbb{R}^{k+1}).
\end{matrix}
\end{equation}
\end{example}

As in the case of
 the classical theory on the (scalar-valued)
Hardy space $\mathcal{H}_+^2 \equiv \mathcal{H}_+^2(\mathbb{C})$,
we can characterize
 ${\mathcal {H}}_{\pm}^2 (W)$ by means of the Fourier transform:

\begin{lem}
\label{lem:Hardy}
Let $W$ be a separable Hilbert space,
and $\mathcal{H}_\pm^2(W)$ the $W$-valued Hardy spaces
(see \eqref{eqn:H}).
\begin{enumerate}
\item[1)]
For $F \in {\mathcal {H}}_{\pm}^2(W)$,
the boundary value
$$
F(t\pm i0) := \underset {u \downarrow 0}\lim F(t \pm i u)
$$
exists as a weak limit in the Hilbert space $L^2({\mathbb{R}}, W)$,
 and defines an isometric embedding:
\begin{equation}\label{eqn:HpmL}
  {\mathcal {H}}_{\pm}^2(W) \hookrightarrow L^2({\mathbb{R}}, W).
\end{equation}
From now,
 we regard ${\mathcal {H}}_{\pm}^2(W)$ as a closed subspace
 of $L^2({\mathbb{R}}, W)$.
\item[2)]
The $W$-valued Fourier transform ${\mathcal{F}}$ induces the unitary isomorphism:
$$
     {\mathcal {F}} : {\mathcal {H}}_{\pm}^2(W) \overset{\sim}{\to} L^2({\mathbb{R}}_{\pm}, W) .
$$
\item[3)]
$L^2({\mathbb{R}}, W)= {\mathcal {H}}_{+}^2(W) \oplus {\mathcal
{H}}_{-}^2(W)$ (direct sum).
\item[4)] If a function $F\in {\mathcal {H}}_{+}^2(W)$ satisfies
$F(t+i0)=F(-t+i0)$ then $F\equiv0$.
\end{enumerate}
\end{lem}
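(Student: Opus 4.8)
The plan is to reduce all four assertions to one \emph{vector-valued Paley--Wiener theorem}, namely that $\mathcal H_+^2(W)$ coincides with the image of $L^2(\R_+,W)$ under the Laplace-type transform
\begin{equation*}
  (\Phi g)(z):=\int_0^\infty g(\rho)\,e^{2\pi i\rho z}\,d\rho\qquad(z\in\Pi_+),
\end{equation*}
and that $\Phi$ is a unitary isomorphism $L^2(\R_+,W)\to\mathcal H_+^2(W)$; the analogous statement for $\mathcal H_-^2(W)$, with $\R_-$ in place of $\R_+$, will follow by the same argument, since $e^{2\pi i\rho z}$ is bounded on $\Pi_-$ exactly when $\rho\le0$. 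Granting this, everything becomes formal. Writing $F=\Phi g$ one has $F(\cdot+iu)=\mathcal F^{-1}\bigl(\mathbf{1}_{\R_+}e^{-2\pi\rho u}g\bigr)$, so $F(\cdot+iu)\to\mathcal F^{-1}(\mathbf{1}_{\R_+}g)$ in $L^2(\R,W)$ as $u\downarrow0$ by dominated convergence (dominated by $\|g(\cdot)\|_W$); this yields the boundary value in 1) and, via unitarity of $\mathcal F$, the isometry $\|F(\cdot+i0)\|_{L^2(\R,W)}=\|g\|_{L^2(\R_+,W)}=\|F\|_{\mathcal H_+^2(W)}$. Then $\mathcal F(F(\cdot+i0))=g\in L^2(\R_+,W)$, and the map $F\mapsto g$ is exactly $\Phi^{-1}$, which is assertion 2); assertion 3) follows because $\mathcal F$ is unitary on $L^2(\R,W)$ and $L^2(\R,W)=L^2(\R_+,W)\oplus L^2(\R_-,W)$ orthogonally; and for 4), if $F(t+i0)=F(-t+i0)$ then $g=\mathcal F(F(\cdot+i0))$ is an even function supported in $\R_+$, hence also supported in $\R_-$, so $g=0$ a.e.\ and $F=\Phi g\equiv0$.

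It remains to carry out the Paley--Wiener step. That $\Phi g$ is well defined and holomorphic for $g\in L^2(\R_+,W)$ is immediate: for $z=t+iu$ with $u>0$ the integrand is Bochner-integrable by Cauchy--Schwarz since $\rho\mapsto e^{-2\pi\rho u}$ lies in $L^2(\R_+)$, and one checks holomorphy in the sense used above by pairing with $w\in W$. From $(\Phi g)(\cdot+iu)=\mathcal F^{-1}(\mathbf{1}_{\R_+}e^{-2\pi\rho u}g)$ and the Plancherel theorem on $L^2(\R,W)$ one gets
\begin{equation*}
  \int_\R\|(\Phi g)(t+iu)\|_W^2\,dt=\int_0^\infty e^{-4\pi\rho u}\|g(\rho)\|_W^2\,d\rho,
\end{equation*}
which increases to $\|g\|_{L^2(\R_+,W)}^2$ as $u\downarrow0$ by monotone convergence, so $\Phi g\in\mathcal H_+^2(W)$ with $\|\Phi g\|_{\mathcal H_+^2(W)}=\|g\|_{L^2(\R_+,W)}$. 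For surjectivity I would fix $F\in\mathcal H_+^2(W)$, note $F(\cdot+iu)\in L^2(\R,W)$ for each $u>0$, set $g_u(\rho):=e^{2\pi\rho u}\bigl(\mathcal F(F(\cdot+iu))\bigr)(\rho)$, and show first that $g_u$ does not depend on $u$: pairing with any $w\in W$ reduces this to the scalar function $(F(\cdot),w)_W\in\mathcal H_+^2(\C)$, for which it is the classical fact proved by shifting a rectangular contour and letting its width tend to infinity. Calling the common value $g$, the identity above --- now with $F$ in place of $\Phi g$ --- gives $\int_\R e^{-4\pi\rho u}\|g(\rho)\|_W^2\,d\rho=\int_\R\|F(t+iu)\|_W^2\,dt\le\|F\|_{\mathcal H_+^2(W)}^2$ uniformly in $u>0$; since $e^{-4\pi\rho u}\to\infty$ uniformly on $\{\rho\le-\eps\}$ as $u\to\infty$ for every $\eps>0$, were $g$ not a.e.\ zero on $\R_-$ the left-hand side would be unbounded, a contradiction. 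Hence $g\in L^2(\R_+,W)$ (let $u\downarrow0$ and use monotone convergence), $\mathcal F(F(\cdot+iu))=\mathbf{1}_{\R_+}e^{-2\pi\rho u}g$, and therefore $F=\Phi g$.

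I expect the $u$-independence of $g_u$ to be the only real obstacle, as it is the step that genuinely uses holomorphy of $F$ on the full half-plane rather than on a single horizontal line; I would dispatch it by applying the scalar Paley--Wiener/Cauchy argument to each $(F(\cdot),w)_W$, say for $w$ ranging over a countable total subset of $W$ and then extending to all of $W$ by density and continuity of the inner product. Everything else --- convergence of the Bochner integrals, the Plancherel identity on $L^2(\R,W)$, the support argument for $g$, and the deductions of 1)--4) --- is routine bookkeeping.
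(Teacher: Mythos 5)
Your proof is correct and reaches the same conclusions, but it follows a different organizational route than the paper. The paper decomposes $F$ against a fixed orthonormal basis $\{e_j\}$ of $W$, applies the scalar Paley--Wiener theorem to each coordinate function $F_j(z)=(F(z),e_j)_W$, and then reassembles; the crucial technical step there is exchanging $\sup_{u>0}$ and $\sum_j$ in $\|F\|^2_{\mathcal H_+^2(W)}=\sup_{u>0}\sum_j I_j(u)$, which is justified by the monotonicity of $u\mapsto I_j(u)$ inherited from the scalar theory. You instead construct the candidate inverse map $\Phi:L^2(\R_+,W)\to\mathcal H_+^2(W)$ up front, prove directly that it is an isometry (Plancherel plus monotone convergence), and establish surjectivity via the $u$-independence of $g_u(\rho)=e^{2\pi\rho u}(\mathcal F F(\cdot+iu))(\rho)$, reducing that step to the scalar case by pairing with vectors in a countable total subset of $W$ rather than with basis vectors. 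Both approaches ultimately lean on the same scalar Paley--Wiener input; yours packages the argument more uniformly by exhibiting the explicit unitary and reading off 1)--4) as corollaries, while the paper's is somewhat more direct about constructing the boundary value $F(t+i0)$ itself. One small remark: your uniform-boundedness argument for $\operatorname{supp}g\subset\R_+$ (sending $u\to\infty$) is a clean alternative to the paper's appeal to \eqref{eqn:FFj}, and your derivation of 4) is literally the same observation the paper makes, just phrased in terms of $g$ being both even and supported in $\R_+$.
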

\begin{proof}
The idea is to reduce the general case to the classical one by using a
uniform estimate on norms as the imaginary part $u$ tends to zero.

Let $\{e_j\}$ be an orthonormal basis of $W$.
Suppose
$F \in \mathcal{H}_+^2(W)$.
Then we have
\begin{align}\label{eqn:Ijy}
\|F\|_{\mathcal{H}_+^2(W)}^2
&= \sup_{u>0} \int_{\mathbb{R}}
   \|F(t+iu)\|_W^2 dt
\nonumber
\\
&= \sup_{u>0} \sum_j I_j(u),
\end{align}
where we set
\[
I_j(u) := \int_{\mathbb{R}}
          |(F(t+iu),e_j)_W|^2 dt.
\]
Then, it follows from \eqref{eqn:Ijy} that  for any $j$
$\sup_{u>0} I_j(u) < \infty$
and therefore
$$
F_j(z):=(F_j(z),e_j)_W,
\quad
(z = t+iu \in \Pi_+)
$$
belongs to the (scalar-valued) Hardy space $\mathcal{H}_+^2$.
By the classical Paley--Wiener theorem for the (scalar-valued) Hardy space
$\mathcal{H}_+^2$,
we have:
\begin{align}
&\text{$F_j(t+i0) := \lim_{u\downarrow0} F_j(t+iu)$
       (weak limit in $L^2(\mathbb{R})$)},
\label{eqn:Fj}
\\
&\mathcal{F}F_j(t+i0) \in L^2(\mathbb{R}_+),
\label{eqn:FFj}
\\
&(\mathcal{F}F_j(t+iu))(\rho)
 = e^{-2\pi u\rho} (\mathcal{F}F_j(t+i0))(\rho)\,{\mathrm{for}}\, u>0,
\label{eqn:FFyj}
\\
&\text{$I_j(u)$ is a monotonely decreasing function of $u>0$},
\label{eqn:mono}
\\
&\lim_{u\downarrow0} I_j(u)
 = \|F_j(t+iu)\|_{\mathcal{H}_+^2}^2
 = \|F_j(t+i0)\|_{L^2(\mathbb{R})}^2.
\label{eqn:limIj}
\end{align}
The formula \eqref{eqn:FFyj} shows \eqref{eqn:mono},
 which is crucial in the uniform estimate as below.
In fact by \eqref{eqn:mono} we can exchange
$\sup_{u>0}$ and $\sum_j$ in \eqref{eqn:Ijy}.
Thus,
we get
\begin{equation*}
\|F\|_{\mathcal{H}_+^2(W)}^2
= \sum_j \lim_{u\downarrow0} I_j(u)
= \sum_j \|F_j(t+i0)\|_{L^2(\mathbb{R})}^2.
\end{equation*}
Hence we can define an element of $L^2(\mathbb{R},W)$ as the following weak limit:
\[
F(t+i0) := \sum_j F_j(t+i0) e_j.
\]
Equivalently,
$F(t+i0)$ is the weak limit of
$F(t+iu)$ in $L^2(\mathbb{R},W)$ as $u\to0$.
Further,
\eqref{eqn:FFj} implies
$\operatorname{supp}\mathcal{F}F(t+i0) \subset \mathbb{R}_+$ because
\[
\mathcal{F}F(t+i0)
= \sum_j \mathcal{F}F_j(t+i0)e_j
\quad \text{(weak limit)}.
\]
In summary we have shown that
$F(t+i0) \in L^2(\mathbb{R},W)$,
$\mathcal{F}F(t+i0) \in L^2(\mathbb{R}_+,W)$,
and
\[
\|F\|_{\mathcal{H}_+^2(W)}
= \|F(t+i0)\|_{L^2(\mathbb{R},W)}
= \|\mathcal{F}F(t+i0)\|_{L^2(\mathbb{R}_+,W)}
\]
for any $F \in \mathcal{H}_+^2(W)$.
Thus, we have proved that the map
\[
\mathcal{F}: \mathcal{H}_+^2(W) \to L^2(\mathbb{R}_+,W)
\]
is well-defined and isometric.

Conversely, the opposite inclusion
$\mathcal{F}^{-1}(L^2(\mathbb{R}_+,W)) \subset \mathcal{H}_+^2(W)$
is proved in a similar way.
Hence the statements 1), 2) and 3) follow.

The last statement is now immediate from 2) because
$\mathcal{F}F(t+i0)(\rho) = \mathcal{F}F(-t+i0)(-\rho)$.
\end{proof}
\vskip 10pt
\section{Weyl Operator Calculus}\label{sec:3}
In this section, based on the well-known construction of the Schr\"odinger representation
and the Segal--Shale--Weil representation, we introduce the action of
the outer automorphisms of the Heisenberg group
on the Weyl operator calculus (see \eqref{eqn:Optau},
\eqref{eqn:thetatau}, and \eqref{eqn:mettau}),
and discuss carefully its basic properties,
see Proposition \ref{fact:irred} and Lemma \ref{lem:invol}.
In particular,
the results of this section will be used in 
analyzing of the `small representation' $\pi_{i\lambda,\delta}^{GL(2n,\R)}$, when restricted to
a certain maximal parabolic subgroup of $Sp(n,\R)$, see e.g.
the identity
\eqref{eqn:Op-phi-rho}.\\

Let $\R^{2m}$ be the $2m$-dimensional Euclidean vector space endowed with the standard
symplectic form
\begin{equation}\label{eqn:symp}
\omega(X,Y)\equiv\omega((x,\xi),(y,\eta)):=\langle\xi, y\rangle-\langle x,\eta\rangle.
\end{equation}

The choice of this non-degenerate closed 2-form gives a standard realization of the symplectic
group $Sp(m,\R)$ and the Heisenberg group $H^{2m+1}$.
Namely, 
$$
Sp(m,\R):=\{T\in GL(2m,\R):\,\omega(TX,TY)=\omega(X,Y)\}
$$
 and
$$
H^{2m+1}:=\{g=(s,A)\in\R\times\R^{2m}\}
$$
 equipped with the
product
$$
g\cdot g' \equiv (s,A)\cdot(s',A'):=(s+s'+\frac12\omega(A,A'),A+A').
$$

Accordingly, the Heisenberg Lie algebra $\mathfrak h^{2m+1}$ is then defined by
$$
[(s,X),(t,Y)]=(\omega(X,Y),0).
$$

Finally we denote by $Z$ the center $\{(s,0):\,s\in\R\}$ of $H^{2m+1}$.

The Heisenberg group $H^{2m+1}$ admits a unitary representation,
denoted by $\vartheta$,
 on the \emph{configuration space} $L^2(\R^m)$ by the formula
\begin{equation}\label{eqn:theta}
    \vartheta(g)\varphi(x)=
e^{2\pi i (s+\langle x,\alpha\rangle-\frac12\langle a,\alpha\rangle)}\varphi(x-a),\quad g=(s,a,\alpha).
\end{equation}

This
representation,
referred to as the \emph{Schr\"{o}dinger representation}, is
irreducible and unitary \cite{vN31}.
The symplectic group, or more precisely its double covering,
also acts on the same Hilbert space
 $L^2(\R^m)$.


In order to track the effect of $\operatorname{Aut}(H^{2m+1})$, we recall briefly its construction. The group
$Sp(m,\R)$ acts by automorphisms of $H^{2m+1}$ preserving the center $Z$ pointwise. Composing
$\vartheta$ with such automorphisms $T\in Sp(m,\R)$ one gets a new
representation $\vartheta\circ T$ of $H^{2m+1}$ on $L^2(\R^m)$. Notice that these representations
have the same central character, namely $\vartheta
\circ T(s,0,0)=e^{2\pi i s}\operatorname{id}=\vartheta(s,0,0)$. According to the Stone--von
Neumann theorem (see Fact \ref{fact:SvN} below) the representations
$\vartheta$ and $\vartheta\circ T$ are equivalent as irreducible unitary representations of $H^{2m+1}$.
 Thus, there
exists a unitary operator $\mathrm{Met}(T)$ acting on $L^2(\R^m)$ in such a
way that
\begin{equation}\label{eqn:32}
\left(\vartheta\circ T\right)(g)=\mathrm{Met}(T)\vartheta(g)\mathrm{Met}(T)^{-1},\quad g\in H^{2m+1}.
\end{equation}
Because $\vartheta$ is irreducible, Met is defined up to a scalar and gives rise to
a projective unitary representation of $Sp(m,\R)$.
It is known that
 this scalar factor may be chosen in one and only one way, up
to a sign, so that $\mathrm{Met}$ becomes a double-valued representation of
$Sp(m,\R)$. The resulting unitary representation of the metaplectic
group, that we keep denoting $\mathrm{Met}$, is referred to as the
\emph{Segal--Shale--Weil representation} and it is a lowest weight module with respect
to a fixed Borel subalgebra.
Notice that choosing
the opposite sign of the scalar factor in the definition of Met one gets a highest weight module which is isomorphic to
the contragredient representation $\mathrm{Met}^\vee$.

The unitary representation Met splits into
 two irreducible and inequivalent subrepresentations
 ${\mathrm{Met}}_0$ and ${\mathrm{Met}}_{1}$ according to the
 decomposition of the Hilbert space
 $L^2({\mathbb{R}}^m)=L^2({\mathbb{R}}^m)_{\mathrm{even}}\oplus
 L^2({\mathbb{R}}^m)_{\mathrm{odd}}$.\vskip10pt

The \emph{Weyl quantization}, or the\emph{ Weyl
operator calculus}, is a way to associate to a function
${\mathfrak S}(x,\,\xi)$
the operator \,${\mathrm{Op}}({\mathfrak S})$ on $L^2(\mathbb{R}^m)$
defined by the equation
\begin{equation}\label{eqn:33}
({\mathrm{Op}}({\mathfrak S})\,u)(x)=\int_{\R^m\times\R^m} {\mathfrak S}\left(\frac{x+y}{2},\,\eta\right)\,
e^{2\pi i\langle x-y,\,\eta\rangle}\,u(y)\,\,dy\,d\eta\,.
\end{equation}
Such a linear operator sets up an isometry
\begin{equation}
\label{eqn:Op}
     {\mathrm{Op}}: L^2({\mathbb{R}}^{2m})
\overset\sim\longrightarrow
\operatorname{ HS}(L^2({\mathbb{R}}^m), L^2({\mathbb{R}}^m)).
\end{equation}
 from the \emph{phase space} \,$L^2(\R^m\times\R^m)$\, onto the Hilbert space consisting of all Hilbert--Schmidt
operators on the configuration space \,$L^2(\R^m)$\,.
Introducing the {\em
symplectic\/} Fourier transformation \,${\mathcal
F}_{\mathrm{symp}}$ by:
\begin{equation}\label{eqn:Fsymp}
({\mathcal F}_{\mathrm{symp}}\,{\mathfrak S})(X):=\int_{\R^m\times \R^m} {\mathfrak S}(Y)\,e^{-2i\pi\omega(X,Y)}\,dY\,,
\end{equation}
one may give another, fully equivalent, definition of the Weyl
operator by means of the equation
\begin{equation}\label{expdef}
{\mathrm{Op}}({\mathfrak S})=\int_{\R^{2m}} ({\mathcal F}_{\mathrm{symp}}\,{\mathfrak S})(Y)\,
\vartheta(0,Y)\,dY,
\end{equation}
where the right-hand side is a Bochner operator-valued integral.

 The Heisenberg group $H^{2m+1}$
acts on $\R^{2m}\simeq H^{2m+1}/Z,$
by
$$
\R^{2m} \to \R^{2m}, \  X\mapsto X+A
\quad\text{for $g = (s,A)$},
$$
 and consequently it acts on the phase space $L^2(\R^{2m})$
by left translations.
The symplectic group $Sp(m,\R)$ also acts on the same Hilbert space
$L^2(\R^{2m})$ by left translations. (This representation is reducible.
See Section \ref{sec:75} for
its irreducible decomposition.)
In fact, both representations come from an action on $L^2(\R^{2m})$ of
the semidirect product group $G^J:=Sp(m,\R)\ltimes H^{2m+1}$ which is referred to as the \emph{Jacobi group}.\vskip10pt

Let us recall some classical facts in a way that we shall use them in the sequel:
\begin{fact}
\label{fact:1} ~
\begin{enumerate}
\item[1)] The representations $\vartheta$ and $Met$ form a  unitary representation of the double covering
$Mp(m,\R)\ltimes H^{2m+1}$  of $G^J$ on the configuration space $L^2(\R^m)$. This action induces a representation
of the Jacobi group $G^J$ on the
 Hilbert space of Hilbert--Schmidt operators $\operatorname{ HS}(L^2(\R^m),L^2(\R^m))$
 by conjugations.\vskip5pt

\item [2)] The Weyl quantization map Op intertwines the action of $G^J$ on $L^2(\R^{2m})$ with the representation $\mathrm{Met}\ltimes\vartheta$
on the Hilbert space
$\operatorname{ HS}(L^2(\R^m),L^2(\R^m))$ defined in 2). Namely,
 \begin{equation}\label{eqn:thetacov}
 \vartheta(g)\,{\mathrm{Op}}({\mathfrak
 S})\,\vartheta(g^{-1})={\mathrm{Op}}({\mathfrak
 S}\,\circ\,g^{-1}),\quad g\in H^{2m+1}.
 \end{equation}
\begin{equation}\label{eqn:1.12}
{\mathrm{Met}}(g)\,{\mathrm{Op}}({\mathfrak S})\,{\mathrm{Met}}^{-1}(g)={\mathrm{Op}}({\mathfrak S}\,\circ\,g^{-1}),\quad g\in Sp(m,\R).
\end{equation}
\item[3)] Any unitary operator satisfying (\ref{eqn:thetacov}) and (\ref{eqn:1.12}) is a scalar multiple of the Weyl quantization map Op.
\end{enumerate}
\end{fact}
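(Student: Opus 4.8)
The plan is to assemble the three assertions from the constructions already recalled in this section, treating each as a bookkeeping exercise in the Stone–von Neumann framework rather than as a computation from scratch. For part 1), I would first observe that $\vartheta$ is the Schr\"odinger representation \eqref{eqn:theta}, and that $\mathrm{Met}$ was constructed precisely so that \eqref{eqn:32} holds; the only thing to check is that the pair $(\mathrm{Met},\vartheta)$ satisfies the compatibility relation of a semidirect product, i.e. that $\mathrm{Met}(T)\vartheta(g)\mathrm{Met}(T)^{-1} = \vartheta(T\cdot g)$ for $T \in Mp(m,\R)$ acting on $H^{2m+1}$ through its image in $Sp(m,\R)$. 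This is exactly \eqref{eqn:32} rewritten, since $\vartheta\circ T$ means $g \mapsto \vartheta(Tg)$. Hence $T \mapsto \mathrm{Met}(T)$, $g \mapsto \vartheta(g)$ together define a genuine (double-valued on the $Sp$-factor, hence single-valued on $Mp(m,\R)\ltimes H^{2m+1}$) unitary representation on $L^2(\R^m)$. Passing to $\mathrm{HS}(L^2(\R^m),L^2(\R^m))$ by conjugation $S \mapsto U S U^{-1}$ is automatic: conjugation by a unitary is a unitary operator on the Hilbert–Schmidt space, and it is multiplicative in $U$, so the conjugation action of $Mp(m,\R)\ltimes H^{2m+1}$ descends to $G^J$ because the kernel of $Mp \to Sp$ acts by $\pm\mathrm{id}$, which is trivial under conjugation.

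For part 2), the identity \eqref{eqn:thetacov} for the Heisenberg action is obtained from the integral formula \eqref{expdef}: writing $\mathrm{Op}(\mathfrak S) = \int_{\R^{2m}} (\mathcal F_{\mathrm{symp}}\mathfrak S)(Y)\,\vartheta(0,Y)\,dY$ and conjugating by $\vartheta(g)$, one uses the commutation rule $\vartheta(g)\vartheta(0,Y)\vartheta(g^{-1}) = e^{2\pi i \omega(A,Y)}\vartheta(0,Y)$ for $g=(s,A)$, which follows from the Heisenberg multiplication law; absorbing the phase $e^{2\pi i\omega(A,Y)}$ into the symplectic Fourier transform translates $\mathfrak S$ by $A$, i.e. produces $\mathcal F_{\mathrm{symp}}(\mathfrak S \circ g^{-1})$ on the nose. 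The identity \eqref{eqn:1.12} for the metaplectic action is the more substantial point: combining \eqref{expdef} with the intertwining relation \eqref{eqn:32} gives
\[
\mathrm{Met}(T)\,\mathrm{Op}(\mathfrak S)\,\mathrm{Met}(T)^{-1}
= \int_{\R^{2m}} (\mathcal F_{\mathrm{symp}}\mathfrak S)(Y)\,\vartheta(0, TY)\,dY,
\]
and then a change of variables $Y \mapsto T^{-1}Y$, together with the fact that $\mathcal F_{\mathrm{symp}}$ intertwines the linear $Sp(m,\R)$-action on phase space with itself (because $\omega$ is $Sp$-invariant and $T$ has determinant one, so the Jacobian is trivial), identifies the right-hand side with $\mathrm{Op}(\mathfrak S \circ T^{-1})$. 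I expect the verification that $\mathcal F_{\mathrm{symp}} \circ (T^{-1})^* = (T^{-1})^* \circ \mathcal F_{\mathrm{symp}}$ to be the only place requiring a short explicit computation, using $\omega(T^{-1}X, Y) = \omega(X, TY)$.

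For part 3), suppose $A$ is a unitary operator on $L^2(\R^{2m})$ satisfying both \eqref{eqn:thetacov} and \eqref{eqn:1.12} with $A$ in place of $\mathrm{Op}$, i.e. $A$ intertwines the $G^J$-action on phase space with the conjugation action on Hilbert–Schmidt operators. Then $\mathrm{Op}^{-1} \circ A$ is a unitary operator on $L^2(\R^{2m})$ commuting with the full $G^J$-representation on the phase space. The main obstacle — and the crux of the argument — is to show that this representation of $G^J$ on $L^2(\R^{2m})$ is irreducible, or at least multiplicity-free in the appropriate sense, so that Schur's lemma forces $\mathrm{Op}^{-1}\circ A$ to be a scalar. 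Here I would invoke the Stone–von Neumann uniqueness theorem one level up: the conjugation representation of $H^{2m+1}$ on $\mathrm{HS}(L^2(\R^m))$ decomposes, via Op, and the commutant of the $H^{2m+1}$-action alone is already small (it is the von Neumann algebra generated by right translations, which under Op corresponds to the opposite Schr\"odinger representation); imposing in addition commutation with $\mathrm{Met}$ cuts this down to the scalars. Concretely, an operator commuting with $\vartheta(0,Y)$-conjugation for all $Y$ must be a convolution-type (i.e. translation-invariant) operator on $L^2(\R^{2m})$, hence a Fourier multiplier; commuting additionally with all $\mathrm{Met}(T)$-conjugations forces the multiplier to be $Sp(m,\R)$-invariant and bounded, hence (since $Sp(m,\R)$ acts on $\R^{2m}\setminus\{0\}$ with a dense orbit) constant. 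This yields $A = c\,\mathrm{Op}$ with $|c|=1$, completing the proof.
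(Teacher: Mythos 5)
Your proof is correct and follows essentially the same route as the paper, with some differences worth noting. For part 1) you make the same observation the paper does: the relation \eqref{eqn:32} is precisely the semidirect-product compatibility for $Mp(m,\R)\ltimes H^{2m+1}$, and the kernel of the metaplectic cover acts trivially under conjugation, so the conjugation action on $\operatorname{HS}(L^2(\R^m),L^2(\R^m))$ descends to $G^J$. For part 2) the paper simply refers to the literature (Folland); your explicit derivation from the Bochner integral \eqref{expdef}, using the conjugation rule $\vartheta(g)\vartheta(0,Y)\vartheta(g)^{-1}=e^{2\pi i\omega(A,Y)}\vartheta(0,Y)$ and the $Sp(m,\R)$-equivariance of the symplectic Fourier transform, is the standard computation and is correct. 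For part 3) the difference is stylistic rather than substantive: the paper shows irreducibility of the $Sp(m,\R)\ltimes\R^{2m}$-action on $L^2(\R^{2m})$ directly via invariant subspaces (any translation-invariant closed subspace is a Wiener space $\mathcal F^{-1}(L^2(E))$, and ergodicity of $Sp(m,\R)$ on $\R^{2m}$ forces $E$ to be null or conull), whereas you show the commutant is trivial (translation-commuting operators are Fourier multipliers, and $Sp(m,\R)$-invariance of the multiplier forces it to be constant). These are dual formulations, interchangeable by Schur's lemma for unitary representations. Two small remarks: you could sharpen ``dense orbit'' to transitivity, since $Sp(m,\R)$ acts transitively on $\R^{2m}\setminus\{0\}$; and your aside about ``Stone--von Neumann one level up'' and the von Neumann algebra of right translations is unnecessary detour --- the conjugation action of $H^{2m+1}$ on $\operatorname{HS}$ factors through $H^{2m+1}/Z\simeq\R^{2m}$ because the center acts trivially under conjugation, so the Fourier-multiplier description of the commutant of translations is all that is needed, and the argument you then give is the one that works.
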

\begin{proof}
Most of these statements may be found in the literature (\emph{e.g.} \cite[Chapter 2]{Foll} for the second statement),
but we give a brief explanation of some of them for
the convenience of the reader.  Namely,
the first statement follows from (\ref{eqn:32}). Consequently, the semi-direct product $Mp(m,\R)\ltimes H^{2m+1}$ also acts by
conjugations on the space $\operatorname{ HS}(L^2(\R^m),L^2(\R^m))$, and this action is well
defined for the Jacobi group $G^J=Sp(m,\R)\ltimes H^{2m+1}$ because the
kernel of the metaplectic cover
$Mp(m,\R) \to Sp(m,\R)$ acts trivially on
$\operatorname{ HS}(L^2(\R^m), L^2(\R^m))$.

The third statement follows from the fact that $L^2(\R^{2m})$ is already irreducible by the codimension one subgroup $Sp(m,\R)\ltimes\R^{2m}$
 of $G^J$. Indeed,
any translation--invariant closed subspace of $L^2(\R^{2m})$ is a Wiener space, \emph{i.e.} the pre-image by the Fourier transform of $L^2(E)$ for some measurable set $E$ in $\R^{2m}$.
 On the other hand, the symplectic group acts ergodically on $\R^{2m}$,
in the sense that the only
$Sp(m,\R)$--invariant measurable subsets of $\R^{2m}$ are either null or conull with respect to the Lebesgue measure. Hence, the whole group $Sp(m,\R)\ltimes \R^{2m+1}$ acts irreducibly on $L^2(\R^{2m})$.
\end{proof}

Now we consider the `twist' of the metaplectic representation by automorphisms of the Heisenberg group.

The group of automorphisms of the Heisenberg group
$H^{2m+1}$, to be denoted by Aut$(H^{2m+1})$, is generated by
\begin{itemize}
  \item[-] symplectic maps : $(s,A)\mapsto (s, T(A))$, where $T\in Sp(m,\R)$;
  \item[-] inner automorphisms $(s,A)\mapsto I_{(t,B)}(s,A):=(t,B)(s,A)(t,B)^{-1}$ $=(s-\omega(A,B),A)$, where $(t,B)\in H^{2m+1}$;
  \item[-] dilations  $(s,A)\mapsto d(r)(s,A):=(r^2s,rA)$, where $r>0$;
  \item[-] inversion: $(s,A)\mapsto i(s,A):=(-s,\alpha,a),$ where $A=(a,\alpha)$.
\end{itemize}

In the sequel we shall pay a particular attention to the \emph{rescaling map} $\tau_\rho$ which
is defined for every $\rho\neq0$ by
\begin{equation}\label{eqn:rescale}
{\tau_\rho}: H^{2m+1}\to H^{2m+1},\quad(s,a,\alpha)\mapsto
\left(\frac\rho4\, s,a,\frac\rho4\, \alpha\right).
\end{equation}

Here we have adopted the parametrization of $\tau_\rho$ in a way that
it fits well
into Lemma \ref{lem:4.2}. We note that $(\tau_{-4})^2 = \operatorname{id}$ and $\tau_4={\mathrm{id}}$.

The whole group $\mathrm{Aut}(H^{2m+1})$
of automorphisms is generated by $G^J$ and $\{\tau_\rho:\rho\in\R^\times\}$. We denote by Aut$(H^{2m+1})_o$ the identity component of Aut$(H^{2m+1})$.
Then we have
$$
\mathrm{Aut}(H^{2m+1})=\{1,\tau_{-4}\}\cdot\mathrm{Aut}(H^{2m+1})_o.
$$
\noindent

\vskip10pt

For any given automorphism $\tau\in\mathrm{Aut}(H^{2m+1})$,
we denote by $\overline\tau$
the induced linear operator on
$H^{2m+1}/Z\simeq\R^{2m}$ and by
$\pi(\tau)$ its pull-back
$\pi(\tau)f:= f\circ(\overline\tau)^{-1}$. We notice
 that $\pi(\tau)$ is a unitary operator on $L^2(\R^{2m})$ if $\tau\in G^J$.

Further,
 we define the $\tau$-\emph{twist}
 ${\mathrm{Op}}_\tau$  of the Weyl quantization map ${\mathrm{Op}}$ by
\begin{equation}\label{eqn:Optau}
 {\mathrm{Op}}_\tau:={\mathrm{Op}}\circ\pi(\tau).
\end{equation}
In particular, it follows from \eqref{eqn:33} and \eqref{eqn:rescale} that
\begin{equation}\label{eqn:Oprho}
    ({\mathrm{Op}}_{\tau_\rho}({\mathfrak S})\,u)(x)=\int_{\R^m\times\R^m} {\mathfrak S}\left(\frac{x+y}{2},\,\frac4\rho\xi\right)\,
e^{2\pi i\langle x-y,\,\xi\rangle}\,u(y)\,\,dy\,d\eta\,.
\end{equation}

Similarly, we define the $\tau$-twist $\vartheta_\tau$ of the Schr\"odinger representation $\vartheta$ by
\begin{equation}\label{eqn:thetatau}
\vartheta_\tau:=\vartheta\circ\tau^{-1}.
\end{equation}
Finally, we define the $\tau$-twist $\mathrm{Met}_\tau$ of the Segal--Shale--Weil representation Met.
For this,
we begin with the identity component
 $\mathrm{Aut}(H^{2m+1})_o$.
We set
\begin{alignat}{1}\label{eqn:mettau}
\mathrm{Met}_\tau:={} &A^{-1}\circ \mathrm{Met}\circ A, \qquad\mathrm{where}\\
 &A=\left\{\begin{array}{ll}
  \mathrm{Met}(\tau),&\mathrm{for}\, \tau\in Sp(m,\R),\\
     \vartheta(\tau),& \mathrm{for}\, \tau\in H^{2m+1},
     \\
      \mathrm{Id}, & \mathrm{for}\, \tau=d(r).
       \end{array}\right.\nonumber
\end{alignat}

It follows from Fact \ref{fact:1} 1) that $\mathrm{Met}_\tau$ is
well-defined for $\tau\in\mathrm{Aut}(H^{2m+1})_o$.
For the connected component containing $\tau_{-4}$,
we set
\begin{equation}\label{eqn:Met2}
\mathrm{Met}_\tau:=\left(\mathrm{Met}_{\tau'}\right)^\vee
\end{equation}
for $\tau = \tau_{-4} \tau'$, $\tau' \in \mathrm{Aut}(H^{2m+1})_o$.

Thereby, Met${}_\tau$ is a unitary representation of $Mp(m,\R)$ on $L^2(\R^m)$ characterized for
every $T\in Sp(m,\R)$ by
$$
Met_\tau(T)\vartheta_\tau(g)Met_\tau(T)^{-1}=\vartheta_\tau( T(g)).
$$


Hence, the group $\mathrm{Aut}(H^{2m+1})$ acts on $L^2(\R^{2m})$ in such a way that the following proposition holds.

\begin{prop}
\label{fact:irred} ~
\begin{enumerate}
\item[1)]
 The $\tau$-twisted Weyl
calculus is covariant with
 respect to the Jacobi group:
 \begin{equation}\label{eqn:thetarhocov}
 \vartheta_\tau(g)\,{\mathrm{Op}}_\tau({\mathfrak
 S})\,\vartheta_\tau(g^{-1})={\mathrm{Op}}_\tau({\mathfrak
 S}\,\circ\,g^{-1}),\quad g\in H^{2m+1},
 \end{equation}
\begin{equation}\label{eqn:metrho}
{\mathrm{Met}}_\tau(g)\,{\mathrm{Op}}_\tau({\mathfrak S})\,{\mathrm{Met}}^{-1}_\tau(g)=
{\mathrm{Op}}_\tau({\mathfrak S}\,\circ\,g^{-1}), \quad g\in Sp(m,\R).
\end{equation}
\item[2)] For any $\tau\in\mathrm{Aut}(H^{2m+1})$ the representation $Met_\tau$ is equivalent either to Met or to its
contragredient Met${}^\vee$.
\end{enumerate}
\end{prop}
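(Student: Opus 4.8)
The plan is to treat the two statements separately, proving part 1) by a direct transport-of-structure argument from Fact \ref{fact:1}, and part 2) by analyzing how the generators of $\mathrm{Aut}(H^{2m+1})$ act on $\mathrm{Met}$, reducing everything to the connected component $\mathrm{Aut}(H^{2m+1})_o$ together with the single extra element $\tau_{-4}$.

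For part 1), I would argue as follows. By definition \eqref{eqn:Optau}, $\mathrm{Op}_\tau = \mathrm{Op}\circ\pi(\tau)$, and $\pi(\tau)$ is the pull-back by $\overline\tau^{-1}$ on $L^2(\R^{2m})$; by definition \eqref{eqn:thetatau}, $\vartheta_\tau = \vartheta\circ\tau^{-1}$; and $\mathrm{Met}_\tau$ is by \eqref{eqn:mettau} a conjugate $A^{-1}\,\mathrm{Met}\,A$ of $\mathrm{Met}$ (on the identity component; the component of $\tau_{-4}$ is handled by passing to contragredients via \eqref{eqn:Met2}, which does not affect the covariance identities since the right-hand sides are defined by composition with $g^{-1}$). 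Substituting these definitions into the two desired identities \eqref{eqn:thetarhocov}--\eqref{eqn:metrho} and using the already-established covariance \eqref{eqn:thetacov}--\eqref{eqn:1.12} of the untwisted Weyl calculus from Fact \ref{fact:1} 2), the twisted identities should follow by a routine bookkeeping of how $\tau$ intertwines the $H^{2m+1}$- and $Sp(m,\R)$-actions on the phase space with those on the configuration space. Concretely, one checks that $\pi(\tau)$ conjugates the left-translation action of $g$ on $L^2(\R^{2m})$ into the left-translation action of $\tau(g)$, and that $A$ conjugates $\vartheta(g)$, $\mathrm{Met}(T)$ into $\vartheta(\tau(g))$, $\mathrm{Met}(\tau(T))$ respectively; combining these with Fact \ref{fact:1} 2) gives exactly \eqref{eqn:thetarhocov} and \eqref{eqn:metrho}. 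It is enough to verify this on the generators $\tau\in Sp(m,\R)$, $\tau\in H^{2m+1}$, $\tau=d(r)$, and $\tau=\tau_{-4}$, since these generate $\mathrm{Aut}(H^{2m+1})$.

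For part 2), the key point is that $\mathrm{Met}$ is characterized up to a scalar by intertwining $\vartheta$ with $\vartheta\circ T$ (Stone--von Neumann, Fact \ref{fact:SvN}), so any construction producing a unitary $U$ with $U\,\vartheta_\tau(g)\,U^{-1} = \vartheta_\tau(T(g))$ for all $T\in Sp(m,\R)$ must coincide, up to scalar, with $\mathrm{Met}_\tau$. I would first show that for $\tau\in\mathrm{Aut}(H^{2m+1})_o$ the representation $\mathrm{Met}_\tau = A^{-1}\,\mathrm{Met}\,A$ is literally unitarily equivalent to $\mathrm{Met}$ — this is immediate since it is a genuine conjugation by the unitary $A$ — and moreover that the cocycle ambiguities match up so that $\mathrm{Met}_\tau$ is again a bona fide (double-valued) representation of $Mp(m,\R)$, not merely a projective one; here one invokes that the sign normalization of $\mathrm{Met}$ is unique up to an overall sign (as recalled in the text), and that conjugation preserves the lowest-weight property, hence preserves the normalization. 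Then, for the non-identity component, $\tau = \tau_{-4}\tau'$ with $\tau'\in\mathrm{Aut}(H^{2m+1})_o$, definition \eqref{eqn:Met2} sets $\mathrm{Met}_\tau = (\mathrm{Met}_{\tau'})^\vee$, which by the previous step is $(\mathrm{Met})^\vee = \mathrm{Met}^\vee$. Since $\mathrm{Aut}(H^{2m+1}) = \{1,\tau_{-4}\}\cdot\mathrm{Aut}(H^{2m+1})_o$, this exhausts all cases: $\mathrm{Met}_\tau \simeq \mathrm{Met}$ when $\tau\in\mathrm{Aut}(H^{2m+1})_o$ and $\mathrm{Met}_\tau\simeq\mathrm{Met}^\vee$ otherwise.

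The main obstacle I anticipate is not the equivalence statement itself but the careful handling of the metaplectic cocycle under the twist — i.e.\ checking that $\mathrm{Met}_\tau$ is well-defined as a representation of $Mp(m,\R)$ (independent of the choices in \eqref{eqn:mettau}) and that the sign ambiguity is resolved consistently. This requires knowing that $\tau_{-4}$ acts on $Sp(m,\R)$ in a way compatible with the metaplectic cover, and that the inner automorphisms and dilations, which act trivially modulo $Sp(m,\R)$ on the relevant data, do not introduce spurious scalars; the cleanest route is to reduce, via Fact \ref{fact:1} 1) and the structure of $\mathrm{Aut}(H^{2m+1})$, to checking the single relation $(\tau_{-4})^2 = \mathrm{id}$ against $(\mathrm{Met}^\vee)^\vee = \mathrm{Met}$, which is consistent. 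Everything else is transport of structure.
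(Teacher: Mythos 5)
The paper states Proposition~\ref{fact:irred} with no accompanying proof; it is treated as an immediate consequence of the definitions \eqref{eqn:Optau}--\eqref{eqn:Met2} together with Fact~\ref{fact:1}, so there is no ``paper's own proof'' against which to measure you, only the surrounding definitions. Your overall strategy --- transport of structure for part~1), and reduction to the two connected components of $\mathrm{Aut}(H^{2m+1})$ for part~2) --- is exactly what the authors evidently have in mind. Part~2) of your argument is complete and correct: on $\mathrm{Aut}(H^{2m+1})_o$ the twist is an honest unitary conjugation by $A$, and on the other component \eqref{eqn:Met2} forces $\mathrm{Met}_\tau\simeq\mathrm{Met}^\vee$, and since $\mathrm{Aut}(H^{2m+1})=\{1,\tau_{-4}\}\cdot\mathrm{Aut}(H^{2m+1})_o$ this exhausts all cases.

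For part~1), however, the phrase ``routine bookkeeping'' hides the one place where actual work is required, and if you carry it out literally you will hit a snag. Substituting $\vartheta_\tau=\vartheta\circ\tau^{-1}$, $\mathrm{Op}_\tau=\mathrm{Op}\circ\pi(\tau)$ with $\pi(\tau)\mathfrak S=\mathfrak S\circ\overline\tau^{-1}$, and then invoking \eqref{eqn:thetacov} with $h=\tau^{-1}(g)$, you obtain
\begin{equation*}
\vartheta_\tau(g)\,\mathrm{Op}_\tau(\mathfrak S)\,\vartheta_\tau(g^{-1})
=\mathrm{Op}\bigl((\pi(\tau)\mathfrak S)\circ\tau^{-1}(g)^{-1}\bigr),
\end{equation*}
and so \eqref{eqn:thetarhocov} reduces to the identity of symbols
$(\pi(\tau)\mathfrak S)\circ\tau^{-1}(g)^{-1}=\pi(\tau)(\mathfrak S\circ g^{-1})$. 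Evaluating both sides at $X\in\R^{2m}$, with $g$ projecting to $A$, the left side is $\mathfrak S(\overline\tau^{-1}(X)-\overline\tau^{-2}(A))$ whereas the right side is $\mathfrak S(\overline\tau^{-1}(X)-A)$; these agree only when $\overline\tau^{2}=\mathrm{id}$ (e.g.\ for $\tau\in Sp(m,\R)$ acting by conjugation on $H^{2m+1}/Z$, or for inner automorphisms where $\overline\tau=\mathrm{id}$), but \emph{not} for a generic dilation $\tau_\rho$. The covariance closes up cleanly if one instead takes $\vartheta_\tau:=\vartheta\circ\tau$ (then $(\pi(\tau)\mathfrak S)\circ\tau(g)^{-1}=\pi(\tau)(\mathfrak S\circ g^{-1})$ holds identically), or equivalently reads $\mathfrak S\circ g^{-1}$ on the right of \eqref{eqn:thetarhocov}--\eqref{eqn:metrho} as the $\tau$-twisted action of $g^{-1}$ on the phase space. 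You do say you would ``verify this on the generators,'' and the dilation generator is precisely where this sign/inverse issue shows itself --- a direct check against $\tau_\rho$ and the explicit formula \eqref{eqn:Oprho} (compare the genuinely proved one-sided covariance in Lemma~\ref{lem:4.2}, which uses the \emph{different} parametrization $\psi_\rho$) will force you to fix the convention before the bookkeeping closes. So the gap is not in your strategy but in the unexamined claim that the conventions as written slot together without adjustment; your proof needs one extra paragraph pinning down which of $\tau$, $\tau^{-1}$ enters each twisted object so that the three definitions are mutually consistent, and only then does the Fact~\ref{fact:1}~2) argument go through uniformly for all generators of $\mathrm{Aut}(H^{2m+1})$.
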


%

The special case of the $\tau$-twist,
namely,
the $\tau$-twist associated with the rescaling map $\tau_\rho$ (\ref{eqn:rescale}) deserves our attention for at least the following two reasons.
 First, the parameter $\frac\rho4$ has a
concrete physical meaning - this is the inverse of the Planck constant $h$ (see \cite[Theorem 4.57]{Foll}, where a slightly different notation was
used. Namely, the Schr\"odinger
representations that we denote by
$\vartheta_{\tau_\rho}$ correspond therein to $\rho_{h}$ with $h=\frac4\rho$).
 Secondly, dilations
do not preserve the center $Z$ of the Heisenberg while the symplectic automorphisms of $H^{2m+1}$ do. More precisely, the whole Jacobi
group $G^J$ fixes $Z$ pointwise.
The last observation together with the Stone -- von Neumann theorem (see below) shows that the action of ${\mathrm{Aut}}(H^{2m+1})/G^J\simeq\{\tau_\rho:\rho\in\R^\times\}( \simeq \R^\times)$
is sufficient in order to obtain all infinite dimensional irreducible unitary representations of the Heisenberg group.

We set
\begin{equation}\label{eqn:thetarho}
\vartheta_\rho:=\vartheta_{\tau_\rho},
\end{equation}
to which we refer as the \emph{Schr\"{o}dinger representations}
with central character $\rho$.

\begin{fact}[{Stone--von Neumann Theorem, \cite{H80,vN31}}]
\label{fact:SvN}
The
representations $\vartheta_\rho$
constitute a
family of irreducible pairwise inequivalent unitary
representations with real parameter $\rho$. Any infinite dimensional irreducible unitary
representation of $H^{2m+1}$ is uniquely determined by its central character and thus
equivalent to one of the $\vartheta_\rho$'s.
\end{fact}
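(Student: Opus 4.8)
\medskip
\noindent\textbf{Proof idea.}
The plan is to run the classical ``Gaussian projector'' argument, using the Weyl calculus of Section \ref{sec:3} only to make the Schr\"odinger side explicit. Begin with the routine reductions. If $\pi$ is a continuous irreducible unitary representation of $H^{2m+1}$ on a (necessarily separable) Hilbert space $\mathcal H_\pi$, then by Schur's lemma the center $Z=\{(s,0):s\in\R\}$ acts through a unitary character $s\mapsto e^{2\pi i\rho s}$, and for $\rho=0$ the representation factors through the abelian quotient $H^{2m+1}/Z\simeq\R^{2m}$, hence is one-dimensional; so every infinite-dimensional $\pi$ has $\rho\neq0$, and two such are inequivalent as soon as their values of $\rho$ differ. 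Since each $\tau_\rho$ is an automorphism of $H^{2m+1}$, the representation $\vartheta_\rho=\vartheta\circ\tau_\rho^{-1}$ is again irreducible and unitary, its central character is the corresponding rescaling of that of $\vartheta$, and precomposition with the $\tau_\rho$ carries the collection of infinite-dimensional irreducibles onto itself. It therefore suffices to show that an arbitrary irreducible unitary representation $\pi$ of $H^{2m+1}$ whose central character coincides with that of the standard Schr\"odinger representation $\vartheta$ of \eqref{eqn:theta}, namely $s\mapsto e^{2\pi is}$, is unitarily equivalent to $\vartheta$; the general statement then follows by twisting with a $\tau_\rho$.

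Fix such a $\pi$. With this normalization the Heisenberg law gives $\pi(0,A)\pi(0,B)=e^{\pi i\omega(A,B)}\pi(0,A+B)$, and hence $\pi(0,B)\pi(0,A)\pi(0,B)^{-1}=e^{2\pi i\omega(B,A)}\pi(0,A)$. Introduce the operator
\[
P:=\int_{\R^{2m}}\pi(0,A)\,e^{-\frac{\pi}{2}\abs{A}^{2}}\,dA,
\]
a Bochner integral of unitary operators ($\abs{\cdot}$ being the Euclidean norm on $\R^{2m}$). A direct Gaussian computation from the commutation relation above yields $P^{*}=P$ and $P^{2}=P$, so $P$ is an orthogonal projection; moreover $P\neq0$, since $P=0$ would force $\int_{\R^{2m}}e^{2\pi i\omega(B,A)}\langle\pi(0,A)v,w\rangle\,e^{-\frac{\pi}{2}\abs{A}^{2}}\,dA=0$ for all $B$ and all $v,w\in\mathcal H_\pi$, whence $\langle\pi(0,A)v,w\rangle\equiv0$ by injectivity of the Fourier transform, which is impossible. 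In the same way one obtains an identity of the form $P\,\pi(0,A)\,P=c(A)\,P$, where $c$ is a scalar function on $\R^{2m}$ depending only on the group law, the Gaussian weight, and the chosen central character---in particular the \emph{same} function for $\pi$ and for $\vartheta$. Consequently $\langle\pi(0,A)v,w\rangle=c(A)\langle v,w\rangle$ for all $v,w\in\operatorname{Ran}P$; and since $\pi$ is irreducible while $\pi(s,A)=e^{2\pi is}\pi(0,A)$, the orbit $\{\pi(0,A)v:A\in\R^{2m}\}$ of any nonzero $v\in\operatorname{Ran}P$ is total in $\mathcal H_\pi$, which forces $\operatorname{Ran}P$ to be one-dimensional.

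Fix a unit vector $v_{0}\in\operatorname{Ran}P$. In the Schr\"odinger model the analogous operator $\int_{\R^{2m}}\vartheta(0,A)e^{-\frac{\pi}{2}\abs{A}^{2}}\,dA$ is, via \eqref{expdef} and \eqref{eqn:Op}, the rank-one projection onto the normalized Gaussian ground state $\phi_{0}\in L^{2}(\R^{m})$; define $U$ on the dense linear span of $\{\vartheta(0,A)\phi_{0}:A\in\R^{2m}\}$ by $U(\vartheta(0,A)\phi_{0}):=\pi(0,A)v_{0}$. Because both overlap functions $\langle\vartheta(0,A)\phi_{0},\vartheta(0,B)\phi_{0}\rangle$ and $\langle\pi(0,A)v_{0},\pi(0,B)v_{0}\rangle$ equal $e^{-\pi i\omega(B,A)}c(A-B)$ with the \emph{same} $c$, the map $U$ is well defined and isometric, hence extends to an isometry $L^{2}(\R^{m})\to\mathcal H_\pi$. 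By construction $U$ intertwines $\vartheta(0,B)$ with $\pi(0,B)$ and the centers by the common central character, so its range is a nonzero $\pi$-invariant closed subspace, thus all of $\mathcal H_\pi$; hence $U$ is a unitary equivalence $\vartheta\simeq\pi$. Twisting by the $\tau_\rho$ now gives $\vartheta_\rho\simeq\pi$ for arbitrary $\rho\neq0$, and together with the inequivalence noted in the first paragraph this proves the Fact.

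The main obstacle is the step asserting that $P$ is a \emph{rank-one} projection: it rests on the identity $P\pi(0,A)P=c(A)P$, whose verification is a two-fold Gaussian-integral manipulation of the Heisenberg cocycle $(A,B)\mapsto e^{\pi i\omega(A,B)}$, and on the totality of a single orbit, which is precisely where irreducibility of $\pi$ enters. The remaining points---idempotency and non-vanishing of $P$, construction of $U$, and the reduction to a fixed central character---are routine once this cocycle bookkeeping is carried out. A more structural alternative is to note, via \eqref{expdef}, \eqref{eqn:Op} and Fact \ref{fact:1}, that the integrated Schr\"odinger representation identifies the ($\rho$-twisted) convolution algebra on $\R^{2m}$ with the Hilbert--Schmidt operators on $L^{2}(\R^{m})$, so that its enveloping $C^{*}$-algebra ought to be the algebra $\mathcal K(L^{2}(\R^{m}))$ of compact operators, after which the Fact becomes the uniqueness of the irreducible representation of $\mathcal K(L^{2}(\R^{m}))$; however, the genuinely nontrivial point in that approach---that the enveloping $C^{*}$-algebra is \emph{exactly} $\mathcal K(L^{2}(\R^{m}))$ and not something carrying extra irreducibles---is equivalent to the theorem itself, so one is returned to the explicit projector $P$.
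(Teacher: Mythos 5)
The paper does not prove this statement: it is quoted as a \emph{Fact} with references to Howe \cite{H80} and von Neumann \cite{vN31}, so there is no internal argument to compare with. Your write-up is a correct, essentially self-contained rendering of the classical ``Gaussian projector'' proof (the argument of von Neumann, presented also in Folland's book cited in the paper). All the decisive steps are in place and in the right order: Schur's lemma plus the $\tau_\rho$-twists reduce to a fixed nontrivial central character; the operator $P$ is self-adjoint and idempotent by the cocycle $(A,B)\mapsto e^{\pi i\omega(A,B)}$ together with a Gaussian convolution identity; $P\neq0$ by injectivity of the Fourier transform applied to matrix coefficients; $P\pi(0,A)P=c(A)P$ combined with irreducibility (totality of a single $\pi(0,\cdot)$-orbit) forces $P$ to have rank one; the Gram-matrix identity then makes $U$ a well-defined isometry, and irreducibility again makes it onto. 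Two minor points worth tightening: the integral defining $P$ should be read in the weak or strong sense rather than as a norm-convergent Bochner integral of operators, since $A\mapsto\pi(0,A)$ is not norm-continuous in general; and the closing phrase ``for arbitrary $\rho\neq0$'' should read ``for the $\rho$ determined by the central character of $\pi$.'' Your side remark that the $C^*$-algebraic route via compact operators is essentially equivalent to the theorem itself is also accurate.
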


%
%


\vskip10pt

%
%
%

To end this section,
we give yet another algebraic property of the Weyl operator calculus.
We shall see in Lemma \ref{lem:1.8} that the irreducible decomposition  of 
$\pi_{i\lambda,\delta}^{GL(2n,\R)}$, when restricted to a maximal parabolic subgroup of
$Sp(n,\R)$, is based on an
involution of the phase space
coming from the parity preserving involution on the configuration
space. 

Consider on $L^2(\R^m)$ an involution defined by $\check{u}(x):=u(-x)$ and
induce through the map ${\mathrm{Op}_{\tau_\rho}}:L^2(\R^{2m})\to \operatorname{ HS}
(L^2(\R^m),L^2(\R^m))$ two involutions on $L^2(\R^{2m})$, denoted by
$\mathfrak S\mapsto{}^{\dag_\rho}\mathfrak S$ and $\mathfrak S\mapsto\mathfrak
S^{\dag_\rho}$, by the following identities:
\begin{eqnarray}\label{eqn:dagger}
  {\mathrm{Op}_{\tau_\rho}} ({}^{\dag_\rho}\mathfrak S)(u)&=& {\mathrm{Op}_{\tau_\rho}}(\mathfrak S)(\check{u}), \\
  {\mathrm{Op}_{\tau_\rho}}( \mathfrak S^{\dag_\rho})(u)&=&({\mathrm{Op}_{\tau_\rho}}(\mathfrak S)(u))\check{}.\label{eqn:dagger2}
\end{eqnarray}
Then 
${}^{\dag_\rho}\mathfrak S$
and
$\mathfrak S^{\dag_\rho}$
are characterized by their partial Fourier transforms defined by
$$
(\mathcal F_\xi\mathfrak S)(x,\eta)
:=\int_{\R^m}\mathfrak S(x,\xi) e^{-2\pi i \langle \xi,\eta\rangle}d\xi
\quad\text{for $\mathfrak S\in L^2(\R^{2m})$}.
$$
\begin{lem}\label{lem:invol}
\begin{eqnarray*}
  \left(\mathcal F_\xi{}^{\dag_\rho}\mathfrak S\right)(x,\eta) &=& \left(\mathcal F_\xi\mathfrak S\right)\left(-\frac2\rho\eta,-\frac\rho2x\right),  \\
    \left(\mathcal F_\xi\mathfrak S^{\dag_\rho}\right)(x,\eta) &=& \left(\mathcal F_\xi\mathfrak S\right)\left(\frac2\rho\eta,\frac\rho2x\right).
\end{eqnarray*}
\end{lem}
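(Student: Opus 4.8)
The plan is to compute everything on the Fourier-transform side, where the Weyl calculus simplifies. First I would record the kernel of $\mathrm{Op}_{\tau_\rho}(\mathfrak S)$ directly from \eqref{eqn:Oprho}: the operator $\mathrm{Op}_{\tau_\rho}(\mathfrak S)$ has integral kernel
\[
K_{\mathfrak S}(x,y)=\int_{\R^m}\mathfrak S\!\left(\tfrac{x+y}{2},\tfrac4\rho\xi\right)e^{2\pi i\langle x-y,\xi\rangle}\,d\xi
=\left(\tfrac\rho4\right)^m(\mathcal F_\xi\mathfrak S)\!\left(\tfrac{x+y}{2},-\tfrac\rho4(x-y)\right),
\]
after the substitution $\eta=\tfrac4\rho\xi$ in the defining integral of $\mathcal F_\xi$. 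Thus the map $\mathfrak S\mapsto K_{\mathfrak S}$ is, up to the change of variables $(x,y)\mapsto(\tfrac{x+y}{2},-\tfrac\rho4(x-y))$ and a constant, just $\mathcal F_\xi$; in particular it is injective, so it suffices to check the claimed identities after composing with $\mathfrak S\mapsto K_{\mathfrak S}$.

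Next I would translate the two involutions into kernel operations. If $T$ has kernel $K(x,y)$, then the operator $u\mapsto T(\check u)$ has kernel $K(x,-y)$, and the operator $u\mapsto (Tu)\check{}$ has kernel $K(-x,y)$. Hence \eqref{eqn:dagger} says $K_{{}^{\dag_\rho}\mathfrak S}(x,y)=K_{\mathfrak S}(x,-y)$ and \eqref{eqn:dagger2} says $K_{\mathfrak S^{\dag_\rho}}(x,y)=K_{\mathfrak S}(-x,y)$. Now I substitute the formula above. Writing $K_{\mathfrak S}(x,y)=c\,(\mathcal F_\xi\mathfrak S)(\tfrac{x+y}{2},-\tfrac\rho4(x-y))$ with $c=(\tfrac\rho4)^m$, the first identity gives
\[
c\,(\mathcal F_\xi{}^{\dag_\rho}\mathfrak S)\!\left(\tfrac{x+y}{2},-\tfrac\rho4(x-y)\right)
=c\,(\mathcal F_\xi\mathfrak S)\!\left(\tfrac{x-y}{2},-\tfrac\rho4(x+y)\right).
\]
Setting $a=\tfrac{x+y}{2}$, $b=-\tfrac\rho4(x-y)$, so that $\tfrac{x-y}{2}=-\tfrac2\rho b$ and $-\tfrac\rho4(x+y)=-\tfrac\rho2 a$, this reads $(\mathcal F_\xi{}^{\dag_\rho}\mathfrak S)(a,b)=(\mathcal F_\xi\mathfrak S)(-\tfrac2\rho b,-\tfrac\rho2 a)$, which is the first asserted formula after renaming $(a,b)=(x,\eta)$. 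The second identity is handled identically: replacing $(x,y)$ by $(-x,y)$ swaps the roles and produces the sign pattern $(\mathcal F_\xi\mathfrak S^{\dag_\rho})(x,\eta)=(\mathcal F_\xi\mathfrak S)(\tfrac2\rho\eta,\tfrac\rho2 x)$.

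The only genuinely delicate point is the bookkeeping of constants and of the change of variables $(x,y)\leftrightarrow(\tfrac{x+y}{2},-\tfrac\rho4(x-y))$, together with justifying that passing to kernels is legitimate for $\mathfrak S\in L^2(\R^{2m})$ (so that $\mathrm{Op}_{\tau_\rho}(\mathfrak S)$ is Hilbert--Schmidt and the kernel is an honest $L^2$ function to which the pointwise manipulations apply, after density of Schwartz functions). That is routine given the isometry \eqref{eqn:Op} and its $\tau_\rho$-twisted version; I expect no conceptual obstacle, only the need to carry the Jacobians and the factor $(\rho/4)^m$ consistently through both computations — and it is reassuring that the constant $c$ cancels on both sides, so the final formulas are clean.
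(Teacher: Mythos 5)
Your proof is correct and essentially the same as the paper's: both establish the identity $K_{\mathfrak S}(x,y)=\left|\tfrac{\rho}{4}\right|^{m}(\mathcal F_\xi\mathfrak S)\left(\tfrac{x+y}{2},-\tfrac{\rho}{4}(x-y)\right)$ by the substitution $\xi\mapsto\tfrac{\rho}{4}\xi$, interpret the two involutions $\dag_\rho$ as the reflections $(x,y)\mapsto(x,-y)$ and $(x,y)\mapsto(-x,y)$ on the kernel, and solve the resulting change of variables $(a,b)=\left(\tfrac{x+y}{2},-\tfrac{\rho}{4}(x-y)\right)$. Your packaging through the Schwartz kernel makes the second identity an immediate symmetric variant of the first, whereas the paper writes out the integral comparison for the first and says the second ``may be proved in the same way''; otherwise the computations coincide, including the cancellation of the factor $\left|\tfrac{\rho}{4}\right|^{m}$ (which you should write with absolute values, though this is immaterial since it cancels).
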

\begin{proof}
By \eqref{eqn:Oprho} the first equality \eqref{eqn:dagger} amounts to
\begin{eqnarray*}
    && \int_{\R^m\times\R^m}{}^{\dag_\rho}\mathfrak S\left(\frac{x+y}2,\frac4\rho\xi\right)e^{2i\pi\langle x-y,\xi\rangle}u(y)dyd\xi \\
  &=&  \int_{\R^m\times\R^m}\mathfrak S\left(\frac{x+y}2,\frac4\rho\xi\right)e^{2i\pi\langle x-y,\xi\rangle}u(-y)dyd\xi.
\end{eqnarray*}

The right-hand side equals
$$
  \left(\frac{|\rho|}4\right)^n\int_{\R^m\times\R^m}\mathfrak S\left(\frac{x-y}2,\xi\right)e^{2i\pi\langle \frac\rho4(x+y),\xi\rangle}u(y)dyd\xi.
$$
This equality holds for all $u\in L^2(\R^m)$, and therefore,
\begin{equation*}
    \int_{\R^m}{}^{\dag_\rho}\mathfrak S\left(\frac{x+y}2,\frac4\rho\xi\right)e^{2i\pi\langle x-y,\xi\rangle}d\xi =
     \left(\frac{|\rho|}4\right)^n\int_{\R^m}\mathfrak S\left(\frac{x-y}2,\xi\right)e^{2i\pi\langle \frac\rho4(x+y),\xi\rangle}d\xi.
\end{equation*}
Namely,
$$
\left(\mathcal F_\xi{}^{\dag_\rho}\mathfrak S\right)\left(\frac{x+y}2,\frac\rho4(y-x)\right) = \left(\mathcal F_\xi\mathfrak S\right)\left(\frac{x-y}2,-\frac\rho4(x+y)\right).
$$
Thus the first statement follows and the second may be proved in the
same way.
\end{proof}

\section{Restriction of $\pi_{i\lambda,\delta}$ to a maximal parabolic subgroup}\label{sec:4}

Let $n=m+1$.
Consider the space of homogeneous functions
\begin{equation}\label{eqn:homoRN}
V_{\mu,\delta}^\infty
:=\{f\in C^\infty(\R^{2n}\setminus\{0\}):f(r\,\cdot)=(\operatorname{sgn}r)^\delta|r|^{-n-\mu}f(\cdot), r\in\R^\times\},
\end{equation}
for $\delta=0,1$ and $\mu\in\C$. It may be seen as the space of even or odd smooth functions
on the unit sphere $S^{2n-1}$ according to $\delta=0$ or $1$, since
homogeneous functions are determined by their restriction to $S^{2n-1}$.
Let $V_{\mu,\delta}$ denote its completion
with respect to the $L^2$-norm over $S^{2n-1}$.
Likewise, by restricting to the hyperplane defined by the first coordinate to be $1$, we can
identify the space $V_{\mu,\delta}$ with the Hilbert space $L^2(\R^{2n-1})$ up to a scalar multiple on the inner product.

The normalized degenerate principal series representations
$\pi_{\mu,\delta}^{GL(2n,\R)}$ induced from the character $\chi_{\mu,\delta}$ of
a maximal parabolic subgroup $P_{2n}$ of
$GL(2n,\R)$ corresponding to the partition $2n=1+(2n-1)$ may be realized on these functional spaces.
The realization of the same representation on $V_{\mu,\delta}$ will
be referred to as the $K$-\emph{picture}, and on $L^2(\R^{2n-1})$ as the $N$-\emph{picture}.

In addition to these standard models of $\pi_{\mu,\delta}^{GL(2n,\R)}$, we shall use another model $L^2(\R,{{\operatorname{ HS}}}(L^2(\R^{m}),L^2(\R^{m})))$,
 which we call the \emph{operator calculus model}. It gives a strong machinery for investigating the restriction to the maximal parabolic subgroup of $Sp(n,\R)$ (see (\ref{eqn:PSpn}) below).
 
 Let us denote by
$$\displaystyle
\mathcal{F}_t(f)(\rho,X)=\int_\R f(t,X)\,e^{-2i\pi t\rho}\,dt,
$$
the partial Fourier transform of $f(t,X)\in L^2(\R^{1+2m})$ with respect to the first
variable. Applying the direct integral of the operators $\operatorname{Op}_{\tau_\rho}$ and using \eqref{eqn:FLk},
we obtain the unitary isomorphisms
\begin{eqnarray}\label{eqn:LMFR}
  && V_{\mu,\delta} \simeq
   L^2(\mathbb{R}^{1+2m}) \simeq
L^2(\mathbb{R},L^2(\mathbb{R}^{2m}))
\underset{\mathcal{F}_t}{\simeq}
L^2(\mathbb{R},L^2(\mathbb{R}^{2m}))\\\nonumber &&\underset{\int
{\mathrm{Op}}_{\tau_\rho}d\rho}{\stackrel{\sim}{\longrightarrow}} L^2(\mathbb R,
{\mathrm{HS}}(L^2(\mathbb R^m),L^2(\mathbb R^m))).
\end{eqnarray}

According to situations we shall use following geometric models for the induced representations:

\begin{figure}[H]
$$
\begin{matrix}
&\text{standard model}
\\[1ex]
&\hidewidth
\kern8.5em
 \framebox[34.7em][l]{$\begin{array}{ll}
          V_{\mu,\delta} = L^2(S^{2n-1})_\delta
         &\quad\text{$K$-picture}
       \\
          \kern-4em\rightsetnw{\text{restrict}}
       \\
          \kern4em
          V_{\mu,\delta}^\infty = \{f \in C^\infty(\mathbb{R}^{2n}\setminus\{0\}):
                             f(r X)
                             = |r|^{-\mu-n}
                             (\operatorname{sgn}r)^\delta f(X),
                             r \in \mathbb{R}^\times \}
          \hidewidth
       \\
          \kern-4em\rightsetsw{\text{restrict}}
       \\
          L^2(H^{2m+1}) = L^2(\mathbb{R},L^2(\mathbb{R}^{2m}))
         &\quad\text{$N$-picture}
       \end{array}
       $}\hidewidth
\\
\\[-1ex]
&\rightsetd{\mathcal{F}_t}
\\[2ex]
&\hidewidth
 \fbox{$L^2(\mathbb{R},L^2(\mathbb{R}^{2m})) \simeq L^2(\mathbb{R}^{2m+1})$}
 \hidewidth
\\[2ex]
&\leftsetsw{\mathcal{F}_\xi}
 \kern10em
 \rightsetse{\int_{\mathbb{R}} \mathrm{Op}_{\tau_\rho}d\rho}
\\[2ex]
\fbox{\parbox{8em}{\begin{center}$\mathcal U_{\mu,\delta}=L^2(\mathbb{R}^{2m+1})$
\\
\text{(see Section \ref{sec:6})}\end{center}}}
&&
 \fbox{\parbox{13em}{\begin{center}
$L^2(\mathbb{R},\mathrm{HS}(L^2(\mathbb{R}^m),L^2(\mathbb{R}^m)))$
\\
\text{(see Section \ref{sec:5})}\end{center}}}
\\
\mbox{non-standard model}&&\mbox{operator calculus model}
\end{matrix}
$$
\caption{}
\label{fig:3models}
\end{figure}

The group $G_1=Sp(n,\R)(=Sp(m+1,\R))$ acts by linear symplectomorphisms
on $\R^{2n}$ and thus it also acts on the real projective space
$\mathbb P^{2m+1}\R$ . Fix a point in $\mathbb P^{2m+1}\R$ and denote by $P$
its stabilizer in $G_1$. This is a maximal parabolic subgroup of $G_1$ with Langlands decomposition
\begin{equation}\label{eqn:PSpn}
P  = MA\overline{N}
\simeq (\R^\times \cdot Sp(m,\R)) \ltimes H^{2m+1}.
\end{equation}

Let $\mathfrak g_1=\mathfrak n+\mathfrak m+\mathfrak a+\overline{\mathfrak n}$ be the Gelfand--Naimark decomposition for the Lie algebra $\mathfrak g_1=Lie (G_1)$.

We identify the standard Heisenberg Lie group $H^{2m+1}$ with the  subgroup $N=\exp \mathfrak n$ through the following Lie groups isomorphism:
\begin{equation}\label{eqn:hn}
(s,x,\xi)\mapsto\left(
                  \begin{array}{cccc}
                    1 & 0 & 0 & 0 \\
                    x & I_m & 0 & 0 \\
                    2s & {}^t\xi & 1 & -^tx \\
                    \xi & 0 & 0 & I_m\\
                  \end{array}
                \right).
\end{equation}
Thus,
in the coordinates
$(t,x,\xi) \in H^{1+2m}$,
the restriction map
$V_{\mu,\delta}^\infty \to L^2(H^{2m+1})$
is given by
\begin{equation}\label{eqn:Npic}
f \mapsto f(1,2t,x,\xi).
\end{equation}

The action of $G_1$  on $\mathbb{P}^{2n-1}\R$ is transitive, and all
such isotropy subgroups are conjugate to each other. Therefore, we may assume
that $P=Sp(n,\R)\cap P_{2n}$.
Then, the natural inclusion
$Sp(n,\mathbb{R}) \subset GL(2n,\mathbb{R})$
induces the following isomorphisms
\[
Sp(n,\mathbb{R})/P \overset{\sim}{\to}
GL(2n,\mathbb{R})/P_{2n} \simeq \mathbb{P}^{2n-1}\mathbb{R}.
\]
Hence, the (normalized) induced representation $\pi_{\mu,\delta}\equiv
\pi_{\mu,\delta}^{Sp(n,\R)}:=
{\mathrm
{Ind}}_P^{Sp(n,\R)}\chi_{\mu,\delta}$ can (\emph{cf}. Section \ref{sec:prf})
also be realized on the Hilbert space $V_{\mu,\delta}$.
Therefore, $\pi_{\mu,\delta}$ is
 equivalent to the restriction of
$\pi_{\mu,\delta}^{GL(2n,\R)}$ with respect to $Sp(n,\R)$. Notice that $\pi_{\mu,\delta}$
is unitary for $\mu=i\lambda,\,\lambda\in\R$.

It is noteworthy  that the unipotent radical $\overline N$ of $P$ is the Heisenberg
group $H^{2n-1}$ which is not abelian if $n\ge2$,
although the unipotent radical of $P_{2n}$ clearly is.
Notice also that the automorphism group Aut($H^{2n-1}$) contains $P/\{\pm1\}$
as a subgroup of index 2.

Denote by $M_o\simeq Sp(m,\R)$ the identity component of
$M\simeq O(1) \times Sp(m,\mathbb{R})$.
The subgroup
 $M_o\ltimes N$ is isomorphic to the
Jacobi group $G^J$ introduced in Section \ref{sec:3}.


We have then the following inclusive relations for subgroups of
symplectomorphisms:
\[
\begin{matrix}
G_1 & \supset & MAN & \supset & G^J=M_oN & \supset &N.
\\[-1ex]
\text{\footnotesize Symplectic group} &&&& \text{\footnotesize Jacobi group} &&
\text{\footnotesize Heisenberg group}
\end{matrix}
\]

 Our
strategy of analyzing the representations
$\pi_{i\lambda,\delta}$ of $G_1$ (see Theorem \ref{thm:twosum}) will be based on their restrictions to these subgroups (see Lemmas \ref{lem:MNdeco} and \ref{lem:1.8}).

We recall from \eqref{eqn:thetarho} that $\vartheta_\rho$ is the
Schr\"odinger representation of the Heisenberg group $H^{2m+1}$
with central character $\rho$.
While the abstract Plancherel formula for the group 
$N \simeq H^{2m+1}$:
$$
L^2(N)=\int_\R \vartheta_\rho\otimes \vartheta_\rho^\vee d\rho,
$$
underlines the decomposition with respect to left and right regular actions of the group $N$,
we shall consider the decomposition of this space with respect to the
 restriction of the principal series representation $\pi_{i\lambda,\delta}$ to the Jacobi group
 $G^J=Sp(m,\R)\ltimes H^{2m+1}$ (see Lemma \ref{lem:MNdeco}).

Let us examine how the restriction $\pi_{i\lambda,\delta}|_{G^J}$ defined on the Hilbert space $V_{i\lambda,\delta}$
 on the left-hand side of \eqref{eqn:LMFR}
is transferred to $L^2(\mathbb{R},L^2(\mathbb{R}^{2m}))$ via the
partial Fourier transform $\mathcal{F}_t$.

The restriction $\pi_{i\lambda,\delta}|_N$ coincides with the left regular representation of $N$ on
 $L^2(\R^{1+2m})$ given by
\begin{align}\label{eqn:piN}
   \pi_{i\lambda,\delta}(g)f(t,X)
      &= f(t-s-\frac12\omega(A,X),
      X-A)\\
      &= f(t-s+\frac12(\langle \xi,a\rangle - \langle x,\alpha\rangle),
      x - a,\xi-\alpha),
\nonumber
\end{align}
for $f(t,X)\in L^2(\R^{1+2m})$ and $g=(s,A)\equiv(s,a,\alpha)\in\H^{2m+1}$.

Taking the partial Fourier transform $\mathcal{F}_t$ of \eqref{eqn:piN},
 we get
\begin{equation}\label{eqn:FpiT}
({\mathcal F}_t\,(\pi_{i\lambda,\delta}(g)\,f))(\rho,x,\,\xi)=e^{-2\pi i\rho\,(s-\frac12(\langle \xi,a\rangle- \langle x,\alpha\rangle)
)}\,({\mathcal F}_tf)(\rho,x-a,\,\xi-\alpha).
\end{equation}

Now, for each $\rho \in \mathbb{R}$,
 we define a representation $\varpi_\rho$ of
$N$ on $L^2(\mathbb{R}^{2m})$ by
\begin{equation}\label{eqn:varpi}
  \varpi_\rho(g) h(x,\xi)
 := e^{-2\pi i\rho\,(s- \frac12(\langle \xi,a\rangle- \langle x,\alpha\rangle))}\,
  h(x - a, \xi - \alpha),
\end{equation}
for $g=(s,a,\alpha) \in N$ and $h \in L^2(\mathbb{R}^{2m})$. Then,
$\varpi_\rho$ is a unitary representation of $N$ for any $\rho$, and
the formula \eqref{eqn:FpiT} may be written as:
\begin{equation}\label{eqn:Fpig}
(\mathcal{F}_t \pi_{i\lambda,\delta} (g)f)(\rho,x,\xi)
= \varpi_\rho (g)(\mathcal{F}_t f)(\rho,x,\xi),
\end{equation}
for $g \in N$. Here, we let $\varpi_\rho(g)$ act on $\mathcal{F}_t
f$ seen as a function of $(x,\xi)$.

 For each $\rho \in
\mathbb{R}$, we can extend the representation $\varpi_\rho$ of $N$
to a unitary
 representation of the Jacobi group $G^J$ by
letting $M_o$ act on $L^2(\mathbb{R}^{2m})$ by
\begin{equation*}
\varpi_\rho(g)h(x,\xi)= h(y,\eta),\quad {\mathrm{with}}\,
(y,\eta)=g^{-1}(x,\xi), \,g\in M_o\simeq Sp(m,\R).
\end{equation*}
Then, clearly the identity \eqref{eqn:Fpig} holds also for $g \in
M_o$. Thus, we have proved the following decomposition formula:
\begin{lem}\label{lem:MNdeco}
For any $(\lambda,\delta) \in \mathbb{R} \times
\mathbb{Z}/2\mathbb{Z}$,
the restriction of $\pi_{i\lambda,\delta}$ to the Jacobi group is
unitarily equivalent to the direct integral of unitary representations
$\varpi_\rho$ via $\mathcal{F}_t$
(see \eqref{eqn:LMFR}):
\begin{equation}\label{eqn:MNdeco}
\pi_{i\lambda,\delta} |_{G^J}
\underset{\mathcal{F}_t}{\simeq}
\int_{\mathbb{R}}^\oplus \varpi_\rho d\rho.
\end{equation}
\end{lem}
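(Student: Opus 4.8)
The plan is to unwind the definitions and check that the formula \eqref{eqn:Fpig}, already established pointwise for $g \in N$, extends to all of $G^J = M_o \ltimes N$, and then assemble the resulting family $\{\varpi_\rho\}_{\rho \in \mathbb{R}}$ into a direct integral decomposition via the partial Fourier transform $\mathcal{F}_t$. The key structural input is Example \ref{ex:FRk} (applied with $k = 2m$), which identifies $\mathcal{F}_t$ on $L^2(\mathbb{R}^{1+2m}) \simeq L^2(\mathbb{R}, L^2(\mathbb{R}^{2m}))$ with the $L^2(\mathbb{R}^{2m})$-valued Fourier transform, so that $\mathcal{F}_t$ is unitary; this gives the last isomorphism of \eqref{eqn:LMFR} rigorous meaning and lets us transport $\pi_{i\lambda,\delta}|_{G^J}$ to the Fourier side.

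First I would verify that each $\varpi_\rho$ defined by \eqref{eqn:varpi} is a unitary representation of $N \simeq H^{2m+1}$: unitarity is clear since the formula is a unimodular phase times translation in $(x,\xi)$, and the homomorphism property is a direct check using the product law $(s,A)\cdot(s',A') = (s+s'+\tfrac12\omega(A,A'), A+A')$ — the cocycle term $\tfrac12\omega(A,A')$ in the product is matched by the cross terms in the phase $e^{-2\pi i \rho(s - \frac12\omega(A,X))}$ after translating $X$. (One may also observe that $\varpi_\rho$ is nothing but $\vartheta_\rho$ tensored appropriately, consistent with the abstract Plancherel formula $L^2(N) = \int_\mathbb{R} \vartheta_\rho \otimes \vartheta_\rho^\vee \, d\rho$, but the bare computation suffices.) Next I would extend $\varpi_\rho$ to $G^J$ by letting $M_o \simeq Sp(m,\mathbb{R})$ act geometrically on $L^2(\mathbb{R}^{2m})$ through $h \mapsto h \circ g^{-1}$; this is manifestly unitary and, together with $\varpi_\rho|_N$, furnishes a representation of the semidirect product $G^J = M_o \ltimes N$ because $M_o$ acts on $N$ by symplectic automorphisms that preserve the symplectic form $\omega$ entering \eqref{eqn:varpi}, so the compatibility relation $\varpi_\rho(g)\varpi_\rho(n)\varpi_\rho(g)^{-1} = \varpi_\rho(gng^{-1})$ holds for $g \in M_o$, $n \in N$.

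Then I would establish the intertwining identity \eqref{eqn:Fpig} for $g \in G^J$. For $g \in N$ this is exactly the content of \eqref{eqn:FpiT}, obtained by applying $\mathcal{F}_t$ to the left regular action \eqref{eqn:piN}: the translation in the base variable $t$ by $s - \frac12\omega(A,X)$ becomes, after Fourier transform in $t$, multiplication by the phase $e^{-2\pi i \rho(s - \frac12\omega(A,X))}$ — but one must be careful that the shift depends on $X$, so the phase correctly picks up the $\frac12(\langle \xi,a\rangle - \langle x,\alpha\rangle)$ term evaluated at the shifted argument; reconciling this with \eqref{eqn:varpi} is the one genuinely fiddly computation, and I would present it as the verification that the two sides of \eqref{eqn:FpiT} agree after the change of variables $X \mapsto X - A$. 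For $g \in M_o$, the action of $M_o$ on $V_{i\lambda,\delta}$ in the $N$-picture is by the geometric action on $\mathbb{R}^{1+2m}$ that fixes the $t$-coordinate (since $M_o$ normalizes $N$ and acts trivially on the center $Z$), hence it commutes with $\mathcal{F}_t$ and descends fiberwise to exactly the geometric $M_o$-action defining $\varpi_\rho$; so \eqref{eqn:Fpig} holds there too, and since $N$ and $M_o$ generate $G^J$ it holds on all of $G^J$.

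Finally, I would conclude: the unitary isomorphism $\mathcal{F}_t \colon V_{i\lambda,\delta} \simeq L^2(\mathbb{R}, L^2(\mathbb{R}^{2m})) = \int_\mathbb{R}^\oplus L^2(\mathbb{R}^{2m})\, d\rho$ intertwines $\pi_{i\lambda,\delta}|_{G^J}$ with the field of representations $(\varpi_\rho)_{\rho \in \mathbb{R}}$ by \eqref{eqn:Fpig}, and since each $\varpi_\rho$ acts on the fiber $L^2(\mathbb{R}^{2m})$ and the action is (weakly) measurable in $\rho$ (evident from the explicit formulas), this is precisely the statement that $\pi_{i\lambda,\delta}|_{G^J} \simeq \int_\mathbb{R}^\oplus \varpi_\rho\, d\rho$, which is \eqref{eqn:MNdeco}. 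The main obstacle is none of these steps individually but rather bookkeeping the phase factors consistently — in particular tracking how the $X$-dependent translation in $t$ in \eqref{eqn:piN} turns into the symplectic cross-term in \eqref{eqn:varpi} under $\mathcal{F}_t$ — and making sure the measurability/direct-integral formalism is invoked cleanly; I expect the write-up to be short once \eqref{eqn:FpiT} is in hand, since the paper has already done the substantive computation there.
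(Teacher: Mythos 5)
Your proposal is correct and follows essentially the same route as the paper: establish \eqref{eqn:FpiT} (equivalently \eqref{eqn:Fpig}) for $g \in N$ by Fourier-transforming the left regular action \eqref{eqn:piN}, observe that $M_o$ acts on the center trivially so its action commutes with $\mathcal{F}_t$ and descends fiberwise, extend $\varpi_\rho$ to $G^J$ accordingly, and read off the direct integral. The paper treats this as a build-up paragraph rather than a formal proof block, and is terser about measurability and the group-law check for $\varpi_\rho$, but the substance and order of steps match yours.
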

\vskip10pt
Next we establish the link between the representations ($\varpi_\rho,L^2(\R^{2m}))$ and $(\vartheta_\rho,L^2(\R^m))$ of the Heisenberg group $N\simeq H^{2m+1}$ .
For this we note that the representation $\varpi_\rho$ brings us to the changeover of one parameter
families of automorphisms of $H^{2m+1}$, from $\{\tau_\rho:\rho\in\R^\times\}$ to 
$\{\psi_\rho:\rho\in\R^\times\}$ which defined by
\begin{equation}\label{eqn:psirho}
\psi_\rho(s,a,\alpha):=\left(\frac1\rho s,\frac12 a,\frac2\rho\,\alpha\right).
\end{equation}
 
 Then we state the following covariance relation given by $ {\mathrm{Op}}_{\tau_\rho}$:

\begin{lem}\label{lem:4.2}
 For every $g\in H^{2m+1}$ the following identity in $\mathrm{End}(L^2(\R^m))$ holds for any $\mathfrak S\in L^2(\R^{2m})$ :
\begin{equation}\label{eqn:Op-phi-rho}
    {\mathrm{Op}}_{\tau_\rho}(\varpi_\rho(g)\mathfrak S)=
    {\mathrm{Op}}_{\tau_\rho}(\mathfrak S)\circ \vartheta_{\psi_\rho}(g^{-1}).
\end{equation}
\end{lem}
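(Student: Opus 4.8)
The plan is to verify the identity \eqref{eqn:Op-phi-rho} by unwinding all three pieces of notation on each side and comparing, using the covariance relation \eqref{eqn:thetarhocov} from Proposition \ref{fact:irred} together with the explicit formula \eqref{eqn:Oprho} for the rescaled Weyl calculus. First I would recall that $\varpi_\rho$ acts on $L^2(\R^{2m})$ by \eqref{eqn:varpi}, i.e. by a translation in $(x,\xi)$ twisted by the central character phase $e^{-2\pi i \rho(s - \frac12(\langle\xi,a\rangle - \langle x,\alpha\rangle))}$; the translation part $\mathfrak S \mapsto \mathfrak S\circ g^{-1}$ is precisely the phase-space action of $H^{2m+1}$ appearing in \eqref{eqn:thetacov}. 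The point is then to separate the scalar phase from the geometric translation and to see how each is transported through $\mathrm{Op}_{\tau_\rho}$: the geometric translation by $(a,\alpha)$ in the $(x,\xi)$-variables gets converted, via \eqref{eqn:Oprho}, into a translation by $(a,\frac\rho4\alpha)$ after the rescaling of the $\xi$-variable, and this is exactly the action of $\vartheta$ along the rescaled Heisenberg group $\vartheta_{\psi_\rho}$.

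The key steps, in order, would be: (1) substitute $\varpi_\rho(g)\mathfrak S$ into the defining integral \eqref{eqn:Oprho} for $\mathrm{Op}_{\tau_\rho}$, so that the left-hand side of \eqref{eqn:Op-phi-rho} becomes an explicit oscillatory integral in the kernel variables $(x,y,\xi)$; (2) change variables in that integral to absorb the shift $(x,\xi)\mapsto(x-a,\xi-\alpha)$, keeping careful track of the phase $e^{-2\pi i\rho(s-\frac12(\langle\xi,a\rangle-\langle x,\alpha\rangle))}$ and how it recombines with the exponential $e^{2\pi i\langle x-y,\xi\rangle}$ in \eqref{eqn:Oprho}; (3) recognize the resulting operator as $\mathrm{Op}_{\tau_\rho}(\mathfrak S)$ composed, on the right, with the Schr\"odinger-type operator $\vartheta_{\psi_\rho}(g^{-1})$ acting on $L^2(\R^m)$ — here one uses \eqref{eqn:theta} and \eqref{eqn:thetatau} together with the explicit definition \eqref{eqn:psirho} of $\psi_\rho$ to match parameters. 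A cleaner alternative for steps (1)--(3) is to avoid the kernel computation entirely: write $g = (s,a,0)\cdot(0,0,\alpha)$ (or the analogous factorization into the center, the "$a$-part", and the "$\alpha$-part") and check \eqref{eqn:Op-phi-rho} separately on these three one-parameter subgroups, using \eqref{eqn:thetarhocov} for the geometric part and a direct one-line computation for the central character; since both sides of \eqref{eqn:Op-phi-rho} are (anti)multiplicative in $g$ in the appropriate sense, it suffices to handle generators.

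I would carry this out with the factorization approach, since it isolates the genuinely new content — the appearance of $\psi_\rho$ in place of $\tau_\rho$ — from the bookkeeping. The main obstacle I anticipate is precisely the constant-tracking: the dilation $\tau_\rho$ rescales the $\xi$-variable by $4/\rho$, which introduces Jacobian factors $(|\rho|/4)^m$ (these already surfaced in the proof of Lemma \ref{lem:invol}) and, more delicately, rescales the \emph{arguments} of the central-character phase, so one must check that $\psi_\rho$, as defined in \eqref{eqn:psirho} with its asymmetric scaling $(s,a,\alpha)\mapsto(\frac1\rho s,\frac12 a,\frac2\rho\alpha)$, is exactly the automorphism that makes the phases match — the factor $\frac12$ on the $a$-component versus $\frac2\rho$ on the $\alpha$-component is the subtle point, and it comes from the interplay of the $\frac{x+y}{2}$ midpoint in \eqref{eqn:33} with the $4/\rho$ rescaling. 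Once the generators are verified, assembling them into the general identity \eqref{eqn:Op-phi-rho} is immediate, and the antihomomorphism property (the $g^{-1}$ on the right, reflecting that composition of operators on $L^2(\R^m)$ is being matched against the $N$-action on $\mathfrak S$) is consistent with the form of \eqref{eqn:thetarhocov}.
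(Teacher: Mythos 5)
Your primary plan (substitute $\varpi_\rho(g)\mathfrak S$ into the integral formula \eqref{eqn:Oprho} and change variables) is essentially the paper's proof, and it works. The factorization into one-parameter generators is also viable, and your reason for why it would suffice is sound: if \eqref{eqn:Op-phi-rho} holds for $g_1$ and $g_2$ (for all $\mathfrak S$), it holds for $g_1g_2$, because $\vartheta_{\psi_\rho}$ is a genuine representation of $H^{2m+1}$ --- which in turn rests on the fact that $\psi_\rho$ in \eqref{eqn:psirho} is a group automorphism, a point worth verifying explicitly since the scalings $\tfrac12$ on $a$ and $\tfrac2\rho$ on $\alpha$ must conspire with the $\tfrac1\rho$ on the center to preserve the Heisenberg product.

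Two of your intermediate claims, however, do not hold up. First, \eqref{eqn:thetarhocov} cannot be used to dispose of a ``geometric translation part'': that identity is a two-sided conjugation by $\vartheta_{\tau_\rho}$ giving $\mathrm{Op}_{\tau_\rho}(\mathfrak S\circ g^{-1})$, whereas \eqref{eqn:Op-phi-rho} is a one-sided composition by $\vartheta_{\psi_\rho}$ --- a different twist and a different operator-algebraic shape --- and one cannot pass from the former to the latter by moving a factor across. Moreover, the proposed separation of $\varpi_\rho(g)$ into ``scalar phase $\times$ translation'' fails on the very generators you would use: for $g=(0,a,0)$ the phase is $e^{\pi i\rho\langle\xi,a\rangle}$ and for $g=(0,0,\alpha)$ it is $e^{-\pi i\rho\langle x,\alpha\rangle}$, both nonconstant on phase space. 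So on those generators you would still have to do the full kernel computation (which does succeed and is short); there is no shortcut via \eqref{eqn:thetarhocov}. Second, the Jacobian factors $\bigl(|\rho|/4\bigr)^m$ you anticipate never appear here: the changes of variable are translations ($y\mapsto y+2a$ and $\xi\mapsto\xi+\tfrac{\rho}{4}\alpha$), not dilations. The $(|\rho|/4)$-factor in the proof of Lemma \ref{lem:invol} came from a genuine rescaling in the $\dag_\rho$-involution; it has no analogue in Lemma \ref{lem:4.2}. The actual delicate point is solely the recombination of the phase exponents after the shift, which is exactly where the asymmetric form of $\psi_\rho$ is forced.
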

\begin{proof}
Let $g=(s,a,\alpha)\in H^{2m+1}$ and take an arbitrary function $u\in L^2(\R^m)$.
Using the integral formula \eqref{eqn:Oprho} for ${\mathrm{Op}}_{\tau_\rho}$, we get
\begin{eqnarray*}
 &&{\mathrm{Op}}_{\tau_\rho}(\varpi_\rho(g)\mathfrak S)u(x) \\
   &=& \int_{\R^m\times\R^m}\left( \varpi_\rho(g)\mathfrak S\right)\left(\frac{x+y}2,\frac4\rho\xi\right)e^{2\pi i\langle x-y,\xi\rangle}
   u(y)dyd\xi\nonumber\\
   &=& \int_{\R^m\times\R^m} e^{-2\pi i\rho \left( s-\frac12\left(\langle\frac4\rho\xi,a\rangle-\left\langle\frac{x+y}2,\alpha\right\rangle
    \right)\right)}\mathfrak S\left(\frac{x+y}2-a,\frac4\rho\xi-\alpha\right)e^{2\pi i\langle x-y,\xi\rangle}
   u(y)dyd\xi\nonumber\\
   &=&
   \int_{\R^m\times\R^m}e^{-2\pi i B}
   \mathfrak S\left(\frac{x+y}2,\frac4\rho \xi\right)
      u(y+2a)dyd\xi,
   \end{eqnarray*}
   where
   \begin{eqnarray*}
   B&=&
    \rho s-\frac\rho2\left(\langle\frac4\rho\xi+\alpha,a\rangle-\left\langle\frac{x+y+2a}2,\alpha\right\rangle\right)-
   \left\langle x-y-2a,\xi+\frac\rho4\alpha\right\rangle\\
   &=&\rho s+\frac\rho2\langle a+y,\alpha\rangle-\langle x-y,\xi\rangle.
   \end{eqnarray*}
   In view of the definitions \eqref{eqn:thetatau} anf \eqref{eqn:psirho},
   $$
   \vartheta_{\psi_\rho}(g^{-1})=\vartheta(\psi_\rho^{-1}g^{-1})
=\vartheta(-\rho s,-2a,-\frac\rho2 \alpha).
   $$
   Thus, by the definition \eqref{eqn:theta} of the Schr\"odinger representation $\vartheta$, 
we have
\[
  (\vartheta_{\psi_\rho}(s^{-1})u)(y)
  = e^{-2\pi i(\rho s+\frac{\rho}{2}\langle a+y,\alpha\rangle)}
    u(y+2a).
\]
Hence,
the last integral equals
   \begin{eqnarray*}
   &&\int_{\R^m\times\R^m}
   \mathfrak S\left(\frac{x+y}2,\frac4\rho\xi\right)
   (\vartheta_{\psi_\rho}(g^{-1})u)(y)e^{2\pi i\langle x-y,\xi\rangle}
   dyd\xi\\
   &=&\left({\mathrm{Op}}_{\tau_\rho}(\mathfrak S)\vartheta_{\psi_\rho}(g^{-1})u\right)(x).
\end{eqnarray*}
\end{proof}

Then, it turns out that the decomposition (\ref{eqn:MNdeco}) is not
irreducible, but the following lemma holds:
\begin{lem}\label{lem:1.7}
For any $\rho \in \mathbb{R}^\times$, $\varpi_\rho$ is a unitary
representation of the Jacobi group $G^J$ on $L^2(\mathbb{R}^{2m})$,
which splits into a direct sum $\varpi_\rho^0\oplus\varpi_\rho^1$ of
two pairwise inequivalent unitary irreducible representations.
\end{lem}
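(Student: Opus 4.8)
The plan is to transport $\varpi_\rho$ through the twisted Weyl calculus $\operatorname{Op}_{\tau_\rho}$ to a representation on Hilbert--Schmidt operators on the configuration space $L^2(\R^m)$, and then to read the assertion off from a computation of the commutant. Conjugating by the unitary isomorphism $\operatorname{Op}_{\tau_\rho}\colon L^2(\R^{2m})\overset{\sim}{\longrightarrow}\operatorname{HS}(L^2(\R^m),L^2(\R^m))$ produces a unitary representation of $G^J=M_oN$ on the Hilbert--Schmidt operators. By Lemma \ref{lem:4.2} the Heisenberg part $N$ acts by right composition, $T\mapsto T\circ\vartheta_{\psi_\rho}(g)^{-1}$; and since $\varpi_\rho(m)$ acts on $L^2(\R^{2m})$ by $\mathfrak S\mapsto\mathfrak S\circ m^{-1}$, the $Sp(m,\R)$-covariance \eqref{eqn:metrho} of $\operatorname{Op}_{\tau_\rho}$ shows that $M_o\simeq Sp(m,\R)$ acts by conjugation, $T\mapsto\mathrm{Met}_{\tau_\rho}(m)\,T\,\mathrm{Met}_{\tau_\rho}(m)^{-1}$, where $\mathrm{Met}_{\tau_\rho}$ is a unitary representation of $Mp(m,\R)$ on $L^2(\R^m)$ which, by Proposition \ref{fact:irred}(2), is equivalent to the Segal--Shale--Weil representation $\mathrm{Met}$ or to $\mathrm{Met}^\vee$. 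That these two prescriptions are mutually compatible is automatic, $\operatorname{Op}_{\tau_\rho}$ being a unitary intertwiner and $\varpi_\rho$ a representation; the one point requiring attention is to keep the two automorphisms $\psi_\rho$ and $\tau_\rho$, appearing respectively on the Heisenberg and on the metaplectic side, apart.

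Next I would compute $\operatorname{End}_{G^J}(\varpi_\rho)$. Since $\rho\neq0$, the Schr\"odinger representation $\vartheta_{\psi_\rho}$ is irreducible on $L^2(\R^m)$ (Fact \ref{fact:SvN}), so the von Neumann algebra generated by the operators $T\mapsto T\circ\vartheta_{\psi_\rho}(g)^{-1}$ $(g\in N)$ coincides with the algebra of all right multiplications $T\mapsto T\circ A$, $A\in\mathcal B(L^2(\R^m))$; its commutant (taken in the bounded operators on $\operatorname{HS}(L^2(\R^m),L^2(\R^m))$) is the algebra of left multiplications $T\mapsto S\circ T$, $S\in\mathcal B(L^2(\R^m))$, by a standard fact about Hilbert--Schmidt operators. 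Such an $S$ yields an operator commuting with the $M_o$-action exactly when $S\,\mathrm{Met}_{\tau_\rho}(m)=\mathrm{Met}_{\tau_\rho}(m)\,S$ for every $m$, i.e. when $S\in\operatorname{End}_{Mp(m,\R)}(\mathrm{Met}_{\tau_\rho})$. Hence $\operatorname{End}_{G^J}(\varpi_\rho)\cong\operatorname{End}_{Mp(m,\R)}(\mathrm{Met})$.

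Finally I would invoke the splitting $\mathrm{Met}=\mathrm{Met}_0\oplus\mathrm{Met}_1$ into two irreducible, mutually inequivalent subrepresentations on $L^2(\R^m)_{\mathrm{even}}$ and $L^2(\R^m)_{\mathrm{odd}}$ recalled in Section \ref{sec:3}. By Schur's lemma $\operatorname{End}_{Mp(m,\R)}(\mathrm{Met})=\C\,P_{\mathrm{even}}\oplus\C\,P_{\mathrm{odd}}$, a two-dimensional commutative von Neumann algebra whose two minimal projections sum to the identity. Consequently $\operatorname{End}_{G^J}(\varpi_\rho)$ has the same shape, so $\varpi_\rho$ is multiplicity-free and decomposes as $\varpi_\rho=\varpi_\rho^0\oplus\varpi_\rho^1$, the summands being the ranges of the two projections; each $\varpi_\rho^i$ is irreducible, its self-intertwiners forming a one-dimensional algebra, and $\varpi_\rho^0\not\simeq\varpi_\rho^1$, since $\operatorname{End}_{G^J}(\varpi_\rho)$ contains no nonzero operator carrying one summand into the other. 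Back on $L^2(\R^{2m})$, formula \eqref{eqn:dagger2} identifies $\varpi_\rho^0$ and $\varpi_\rho^1$ with the $(+1)$- and $(-1)$-eigenspaces of the involution $\mathfrak S\mapsto\mathfrak S^{\dag_\rho}$ of Lemma \ref{lem:invol}, which is the description that will be used afterwards in the analysis of $\pi_{i\lambda,\delta}$.

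The technical heart of this argument is the first step: extracting the exact form of the transported $G^J$-action, which requires careful bookkeeping of the automorphisms $\tau_\rho$ and $\psi_\rho$ and of the metaplectic twist; once this is done, the rest follows formally from Schur's lemma and the structure of Hilbert--Schmidt operators.
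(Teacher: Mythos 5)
Your proposal is correct and follows essentially the same route as the paper's proof: transport $\varpi_\rho$ through $\operatorname{Op}_{\tau_\rho}$ to $\operatorname{HS}(L^2(\R^m),L^2(\R^m))$, use irreducibility of the Schr\"odinger representation to control the $N$-invariant structure, use the metaplectic covariance of the Weyl calculus to bring in $\operatorname{Met}_{\tau_\rho}$, and conclude from the splitting $\operatorname{Met}=\operatorname{Met}_0\oplus\operatorname{Met}_1$. The one stylistic difference is that you package the invariant-subspace analysis as a computation of $\operatorname{End}_{G^J}(\varpi_\rho)$ via the left/right-multiplication picture on Hilbert--Schmidt operators, whereas the paper argues directly that every $N$-invariant closed subspace has the form $\operatorname{HS}(L^2(\R^m),U)$ and then lists the admissible $U$. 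Your commutant formulation has the small advantage of making the inequivalence of $\varpi_\rho^0$ and $\varpi_\rho^1$ immediate (a two-dimensional \emph{commutative} endomorphism algebra forces multiplicity one), where the paper simply invokes Proposition \ref{fact:irred}(2) without spelling this out.
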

\begin{proof}
Consider the rescaling map $\tau_\rho$ introduced by (\ref{eqn:rescale}) and recall that the $\tau_\rho$--twisted
 Weyl quantization map induces a $G^J$ equivariant isomorphism
\begin{equation}
\label{eqn:Op}
     {\mathrm{Op}}_{\tau_\rho}: L^2({\mathbb{R}}^{2m})
\overset\sim\longrightarrow
 \operatorname{ HS}(L^2({\mathbb{R}}^m), L^2({\mathbb{R}}^m))
\end{equation}
intertwining the $\varpi_\rho$ and $\vartheta_{\psi_\rho}$ actions (\ref{eqn:Op-phi-rho}).

 The irreducibility of the Schr\"odinger representation $\vartheta_\rho$ of the group $N$
 (Fact \ref{fact:SvN}) implies therefore
 that any $N$-invariant closed subspace in
$\operatorname{ HS}(L^2(\mathbb{R}^m), L^2(\mathbb{R}^m))$ must be of the form
 $
     \operatorname{ HS}(L^2({\mathbb{R}}^m),U)
$
 for some closed subspace $U\subset L^2({\mathbb{R}}^m)$.

In view of the covariance relation (\ref{eqn:metrho}) of the Weyl
quantization, the subspace $\operatorname{ HS}(L^2({\mathbb{R}}^m),U)$ is
$Sp(m,\mathbb{R})$-invariant
 if and only if $U$ itself is $Mp(m,\mathbb{R})$-invariant (see Proposition
\ref{fact:irred}), and the latter happens only if $U$ is one of
$\{0\}$, $L^2({\mathbb{R}}^m)_{\mathrm{even}}, L^2({\mathbb{R}}^m)_{\mathrm{odd}}$
or $L^2({\mathbb{R}}^m)$. Thus, we have the following
irreducible decomposition of $\varpi_\rho$, seen as a representation
of $G^J$ on $L^2({\mathbb{R}}^{2m})$:
\begin{eqnarray}\label{eqn:decompMN}
    L^2(\mathbb R^{2m})&=&W_+\oplus W_-\nonumber\\
    &\underset{{\mathrm{Op}}_{\tau_\rho}}{\stackrel{\sim}{\longrightarrow}}& 
    \operatorname{HS}(L^2({\mathbb{R}}^m),L^2({\mathbb{R}}^m)_{\mathrm{even}})\oplus
     \operatorname{HS}(L^2({\mathbb{R}}^m),L^2({\mathbb{R}}^m)_{\mathrm{odd}}).
\end{eqnarray}

From Proposition \ref{fact:irred} 2) we deduce that the
corresponding representations,
to be denoted by
 $\varpi_\rho^\delta$, of $G^J$, where
$\delta$ labels the parity, are pairwise inequivalent, i.e.
$\varpi_\rho^\delta=\varpi_{\rho'}^{\delta'}$ if and only if
$\rho=\rho'$ and $\delta=\delta'$ for all $\rho,\rho'\in\mathbb R$
and $\delta,\delta'\in \mathbb Z/2\mathbb Z$.
\end{proof}
The following lemma is straightforward from the definition of the
involution
$\mathfrak S\mapsto\mathfrak S^{\dag_\rho}$
(see \eqref{eqn:dagger}).
\begin{lem}\label{lem:Vpm}
The subspaces $W_+$ and $W_-$ introduced above are the $+1$ and $-1$ eigenspaces
of the involution $\mathfrak S\mapsto\mathfrak S^{\dag_\rho}$,
respectively.
\end{lem}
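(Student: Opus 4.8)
The plan is to unwind the definition of the involution $\mathfrak{S}\mapsto\mathfrak{S}^{\dag_\rho}$ through the unitary isomorphism $\mathrm{Op}_{\tau_\rho}$ and match it with the parity decomposition of $L^2(\mathbb{R}^m)$ appearing in the proof of Lemma \ref{lem:1.7}. By \eqref{eqn:dagger2}, $\mathrm{Op}_{\tau_\rho}(\mathfrak{S}^{\dag_\rho})(u) = (\mathrm{Op}_{\tau_\rho}(\mathfrak{S})(u))\check{\phantom{u}}$, i.e. $\mathrm{Op}_{\tau_\rho}(\mathfrak{S}^{\dag_\rho}) = C\circ\mathrm{Op}_{\tau_\rho}(\mathfrak{S})$, where $C$ is the unitary involution $Cu:=\check{u}$ on $L^2(\mathbb{R}^m)$ with $\check{u}(x)=u(-x)$. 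Therefore the $\pm 1$ eigenspace of $\mathfrak{S}\mapsto\mathfrak{S}^{\dag_\rho}$ is carried by $\mathrm{Op}_{\tau_\rho}$ onto the set of Hilbert–Schmidt operators $T$ satisfying $C\circ T = \pm T$, that is, operators whose range lies in the $\pm 1$ eigenspace of $C$.

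Next I would identify those eigenspaces: $C u = +u$ means $u$ is even, $Cu = -u$ means $u$ is odd, so the $+1$ eigenspace of $C$ is $L^2(\mathbb{R}^m)_{\mathrm{even}}$ and the $-1$ eigenspace is $L^2(\mathbb{R}^m)_{\mathrm{odd}}$. Hence the $+1$ (resp. $-1$) eigenspace of $\mathfrak{S}\mapsto\mathfrak{S}^{\dag_\rho}$ maps under $\mathrm{Op}_{\tau_\rho}$ precisely onto $\operatorname{HS}(L^2(\mathbb{R}^m), L^2(\mathbb{R}^m)_{\mathrm{even}})$ (resp. $\operatorname{HS}(L^2(\mathbb{R}^m), L^2(\mathbb{R}^m)_{\mathrm{odd}})$). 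Comparing with \eqref{eqn:decompMN}, which defines $W_+$ and $W_-$ as exactly the preimages under $\mathrm{Op}_{\tau_\rho}$ of these two summands, we conclude that $W_+$ and $W_-$ are the $+1$ and $-1$ eigenspaces of $\mathfrak{S}\mapsto\mathfrak{S}^{\dag_\rho}$ respectively.

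Two small points need care. First, one should confirm that $\mathfrak{S}\mapsto\mathfrak{S}^{\dag_\rho}$ is indeed a well-defined involution on $L^2(\mathbb{R}^{2m})$: this follows because $C$ is a unitary involution, post-composition $T\mapsto C\circ T$ preserves $\operatorname{HS}(L^2(\mathbb{R}^m),L^2(\mathbb{R}^m))$ and is an involution there, and $\mathrm{Op}_{\tau_\rho}$ is a unitary isomorphism by \eqref{eqn:Op}; alternatively it is visible from Lemma \ref{lem:invol}, since applying the stated formula for $\mathcal{F}_\xi\mathfrak{S}^{\dag_\rho}$ twice returns $\mathcal{F}_\xi\mathfrak{S}$. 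Second, one must check the orthogonal splitting $L^2(\mathbb{R}^m) = L^2(\mathbb{R}^m)_{\mathrm{even}}\oplus L^2(\mathbb{R}^m)_{\mathrm{odd}}$ induces $\operatorname{HS}(L^2(\mathbb{R}^m),L^2(\mathbb{R}^m)) = \operatorname{HS}(L^2(\mathbb{R}^m),L^2(\mathbb{R}^m)_{\mathrm{even}})\oplus\operatorname{HS}(L^2(\mathbb{R}^m),L^2(\mathbb{R}^m)_{\mathrm{odd}})$, which is immediate since an operator $T$ decomposes as $P_{\mathrm{even}}T \oplus P_{\mathrm{odd}}T$ with $P_{\mathrm{even}}, P_{\mathrm{odd}}$ the orthogonal projections, and $T$ is Hilbert–Schmidt iff both pieces are.

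I do not expect any genuine obstacle here; the statement is essentially bookkeeping. The only mildly delicate point is making sure the composition convention in \eqref{eqn:dagger2} is read correctly — that $\check{\phantom{u}}$ is applied \emph{after} $\mathrm{Op}_{\tau_\rho}(\mathfrak{S})$, hence corresponds to \emph{left} multiplication by $C$ and thus constrains the \emph{range} of the operator (as opposed to \eqref{eqn:dagger}, which precomposes and constrains the domain). Getting this left/right distinction right is what aligns the eigenspaces of $\dag_\rho$ with the range-side decomposition \eqref{eqn:decompMN} used to define $W_\pm$, rather than with the domain-side decomposition.
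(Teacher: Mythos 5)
Your proof is correct and is exactly the unwinding the paper has in mind: the paper gives no written argument, saying only that the lemma ``is straightforward from the definition of the involution.'' You correctly read \eqref{eqn:dagger2} as saying that $\mathrm{Op}_{\tau_\rho}$ conjugates $\dag_\rho$ into left composition with the parity operator $C$, observe that $CT=\pm T$ is equivalent to the range of $T$ lying in $L^2(\mathbb R^m)_{\mathrm{even}}$ resp.\ $L^2(\mathbb R^m)_{\mathrm{odd}}$, and match this with the range-side decomposition \eqref{eqn:decompMN} that defines $W_\pm$; your cautionary note about the left/right (range/domain) distinction between \eqref{eqn:dagger2} and \eqref{eqn:dagger} is precisely the one point worth being careful about.
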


Eventually, we take the $A$-action into account,
and give the branching law of the (degenerate) principal series
representation $\pi_{i\lambda,\delta}$ of $G_1$ when restricted to the
maximal parabolic subgroup $MAN$.

\begin{lem}\label{lem:1.8} \emph{(Branching law for $G_1\downarrow MAN$).}
For every $(\lambda,\delta)\in\mathbb R\times \mathbb Z/2\mathbb Z$
the space $V_{i\lambda,\delta}$ acted upon by the
representation $\pi_{i\lambda,\delta}|_{MAN}$ splits into the direct
sum of four irreducible representations:
\begin{equation}\label{eqn:AN}
V_{i\lambda,\delta}
\simeq
{\mathcal H}_+^2(W_+)
\oplus
{\mathcal H}_+^2(W_-)
\oplus
{\mathcal H}_-^2(W_+)
\oplus
{\mathcal H}_-^2(W_-).
\end{equation}
\end{lem}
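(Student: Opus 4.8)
The plan is to combine the decomposition of $\pi_{i\lambda,\delta}|_{G^J}$ from Lemma \ref{lem:MNdeco} with the extra information coming from the $A$-action. Recall that by Lemma \ref{lem:MNdeco} we have $\pi_{i\lambda,\delta}|_{G^J} \simeq \int_{\mathbb R}^\oplus \varpi_\rho\, d\rho$ via the partial Fourier transform $\mathcal{F}_t$, realized on $L^2(\mathbb{R}, L^2(\mathbb{R}^{2m}))$. The first step is to work out how the group $A \simeq \mathbb{R}^\times$ (or its identity component $\mathbb{R}_{>0}$) acts on this model. Since $A$ normalizes $N$ and acts on the center of $H^{2m+1} \simeq \overline N$ by dilation, its action in the $N$-picture $L^2(\mathbb{R}^{1+2m})$ is by a combination of dilation in the $t$-variable (together with the $X$-variables) and a modular character; after applying $\mathcal{F}_t$ this becomes a dilation in the dual variable $\rho$. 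The key point I expect is that $A$ acts on the $\rho$-axis by scaling, with the two components $\mathbb{R}_+$ and $\mathbb{R}_-$ preserved by $A_o = \mathbb{R}_{>0}$ and interchanged (or not) according to $\delta$; more precisely, the full $A \simeq \mathbb{R}^\times$ acts transitively on each of the half-lines $\mathbb{R}_{\pm}$.

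The second step is to fix the parity splitting. By Lemma \ref{lem:1.7} each $\varpi_\rho$ splits $G^J$-equivariantly as $\varpi_\rho^0 \oplus \varpi_\rho^1$, corresponding to the decomposition $L^2(\mathbb{R}^{2m}) = W_+ \oplus W_-$ of Lemma \ref{lem:Vpm} (the $\pm 1$ eigenspaces of $\mathfrak S \mapsto \mathfrak S^{\dag_\rho}$), which is carried by $\mathrm{Op}_{\tau_\rho}$ to $\operatorname{HS}(L^2(\mathbb{R}^m), L^2(\mathbb{R}^m)_{\mathrm{even}}) \oplus \operatorname{HS}(L^2(\mathbb{R}^m), L^2(\mathbb{R}^m)_{\mathrm{odd}})$. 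Since the involution $\dag_\rho$ is defined uniformly in $\rho$ and is compatible with the $A$-action (this needs a small check using Lemma \ref{lem:invol} and the explicit $A$-action in $\rho$), the direct integral decomposes as
\[
\pi_{i\lambda,\delta}|_{G^J}
\simeq
\Bigl(\int_{\mathbb{R}}^\oplus \varpi_\rho^+ \, d\rho\Bigr)
\oplus
\Bigl(\int_{\mathbb{R}}^\oplus \varpi_\rho^- \, d\rho\Bigr),
\]
and each summand splits further according to $\rho > 0$ and $\rho < 0$, giving four pieces $L^2(\mathbb{R}_{\pm}, W_{\pm})$ of $L^2(\mathbb{R}, L^2(\mathbb{R}^{2m}))$.

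The third step is to identify these four $L^2(\mathbb{R}_{\pm}, W_{\pm})$ pieces, as $MAN$-modules, with the Hardy spaces $\mathcal{H}_{\pm}^2(W_{\pm})$. This is exactly where Lemma \ref{lem:Hardy} enters: by part 2) of that lemma the $W$-valued Fourier transform $\mathcal{F}$ gives a unitary isomorphism $\mathcal{H}_{\pm}^2(W) \xrightarrow{\sim} L^2(\mathbb{R}_{\pm}, W)$, so $L^2(\mathbb{R}_+, W_+) \simeq \mathcal{H}_+^2(W_+)$ and similarly for the other three. Because $\mathcal{F}_t$ already put us on the $\rho$-side, the composite identification $V_{i\lambda,\delta} \simeq L^2(\mathbb{R}, L^2(\mathbb{R}^{2m})) \simeq \bigoplus_{\pm,\pm} L^2(\mathbb{R}_{\pm}, W_{\pm}) \simeq \bigoplus_{\pm,\pm} \mathcal{H}_{\pm}^2(W_{\pm})$ is precisely \eqref{eqn:AN}. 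One must check that the resulting Hardy-space realization intertwines the $A$-action — the point being that under $\mathcal{F}$ a dilation on $\mathbb{R}_{\pm}$ corresponds to a holomorphic dilation of $\Pi_{\pm}$, which preserves $\mathcal{H}_{\pm}^2$ — and that the $N$- and $M_o$-actions match up as in Lemmas \ref{lem:MNdeco}, \ref{lem:1.7}.

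Finally, irreducibility of each of the four summands as an $MAN$-module: the $G^J = M_o N$-action on $\int_{\mathbb{R}_{\pm}}^\oplus \varpi_\rho^\pm\, d\rho$ is the direct integral of the irreducibles $\varpi_\rho^\pm$ over a half-line on which $A$ acts transitively, so the standard Mackey-type argument (an $MAN$-invariant closed subspace is a direct integral of $G^J$-invariant subspaces, hence of the form $\int_{E} \varpi_\rho^\pm$ for a measurable $E \subset \mathbb{R}_{\pm}$ by irreducibility of $\varpi_\rho^\pm$, and $A$-invariance plus transitivity forces $E$ to be null or conull) yields irreducibility; the four pieces are pairwise inequivalent since they are supported on disjoint $A$-orbits or carry inequivalent $\varpi_\rho^\delta$ (Lemma \ref{lem:1.7}). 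I expect the main obstacle to be the bookkeeping in the first step — pinning down the precise formula for the $A$-action in the operator-calculus/$\rho$-model and verifying its compatibility with both the parity involution $\dag_\rho$ and the passage to $\mathcal{H}_{\pm}^2$ — since everything else is then a direct-integral assembly of facts already in hand.
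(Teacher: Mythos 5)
Your proposal is essentially the paper's own argument: apply Lemma \ref{lem:MNdeco} to reduce to a direct integral of the $\varpi_\rho$, use the parity splitting $\varpi_\rho = \varpi_\rho^0 \oplus \varpi_\rho^1$ of Lemma \ref{lem:1.7} together with the explicit $A$-action on the dual variable $\rho$ (which scales by $a^2 > 0$, so preserves each half-line regardless of $\delta$ — your hedge about $\mathbb{R}_\pm$ being ``interchanged according to $\delta$'' should be dropped) to pin the measurable sets down to $\{0\}, \mathbb{R}_\pm, \mathbb{R}$, and then invoke Lemma \ref{lem:Hardy} 2) to rewrite $L^2(\mathbb{R}_\pm, W_\varepsilon)$ as the Hardy spaces $\mathcal{H}_\pm^2(W_\varepsilon)$; irreducibility and pairwise inequivalence fall out by the same Mackey-type closed-invariant-subspace analysis you describe.
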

\begin{proof}
We shall prove first that each summand in \eqref{eqn:AN} is already
 irreducible as a representation of
$M_oAN\simeq G^JA$.
Then we see that
it is stable by the group $MAN$ and thus irreducible
because $M$ is generated by $M_o$
and $-I_{2n}$,which acts on $V_{i\lambda,\delta}$ by the scalar $(-1)^\delta$.

In light of the $G^J$-irreducible decomposition \eqref{eqn:MNdeco},
any $G^J$-invariant closed subspace $U$ of $V_{i\lambda,\delta}$ must be of the form
\[
U = \mathcal{F}_t^{-1} (L^2(E_+,W_+))\oplus\mathcal{F}_t^{-1} (L^2(E_-,W_-)),
\]
for some measurable sets $E_\pm$ in $\mathbb{R}$.

Suppose furthermore that $U$ is $A$-invariant. Notice that the group
$A$ acts on $V_{i\lambda,\delta} \simeq
L^2(\mathbb{R}^{2m+1})$
 by
\[
\pi_{i\lambda,\delta}(a) f(t,X)
= a^{-1-m-i\lambda} f(a^{-2} t, a^{-1}X).
\]

In turn, their partial Fourier transforms with respect to the $t\in\mathbb R$
variable are given by
\[
(\mathcal{F}_t \pi_{i\lambda,\delta} (a)f) (\rho,X)
= a^{1-m-i\lambda} (\mathcal{F}_t f) (a^2\rho, a^{-1} X).
\]
Therefore, $\mathcal{F}_t f$ is supported in $E_\pm$ if and only if
$\mathcal{F}_t \pi_{i\lambda,\delta}(a)f$ is supported in $a^{-2}E_\pm$
as a $W_\pm$-valued function on $\mathbb{R}$. In particular, $U$ is
an $A$-invariant subspace if and only if $E_\pm$ is an invariant
measurable set under the dilation $\rho \mapsto a^2\rho$ $(a>0)$,
namely, $E_\pm = \{0\}$, $\mathbb{R}_-$, $\mathbb{R}_+$, or
$\mathbb{R}$ (up to measure zero sets). 

Since $M_oAN\simeq G^JA$, $M_oAN$-invariant
proper closed subspaces must be of the form
$\mathcal{F}_t^{-1}(L^2(\mathbb{R}_\pm,
W_\varepsilon))$ with
$\varepsilon = +$ or $-$.

We recall from Lemma \ref{lem:Hardy} that the Hilbert space $L^2(\R,W_\varepsilon)$ is a sum of $W_\varepsilon$-valued Hardy spaces:
\begin{equation}\label{eqn:LHH}
L^2(\mathbb{R},W_\varepsilon)
= \mathcal{H}_+^2(W_\varepsilon) \oplus \mathcal{H}_-^2(W_\varepsilon)
\underset{\mathcal F_t}{\stackrel{\sim}{\rightarrow}}L^2(\R_+,W_\varepsilon)\oplus L^2(\R_-,W_\varepsilon).
\end{equation}
 Now Lemma
\ref{lem:1.8} has been proved.
\end{proof}

Lemma \ref{lem:1.8} implies that the representation
$\pi_{i\lambda,\delta}$ of $G_1$ has
at most four irreducible subrepresentations.
The precise statement for this will be given in Theorem \ref{thm:twosum} .
\vskip10pt

\section{Restriction of $\pi_{i\lambda,\delta}$ to a maximal compact subgroup}
\label{sec:5}

As the operator calculus model $L^2(\R,\operatorname{HS}(L^2(\R^m),L^2(\R^m)))$
was appropriate  for studying the $P$-structure of $\pi_{i\lambda,\delta}$,
we use complex spherical harmonics for the analysis of  the $K$-structure of these representations.

We retain the convention $n=m+1$. Identifying
the symplectic form $\omega$ on $\R^{2n}$ with the imaginary part of
the Hermitian inner product on $\C^n$ we realize the group of
unitary transformations $K=U(n)$ as a subgroup of $G_1=Sp(n,\R)$.
Then the group $K$ is a maximal compact subgroup of $G_1$.

Analogously to the classical spherical harmonics on $\R^n$, consider
harmonic polynomials on $\C^n$ as follows. For
$\alpha,\beta\in\mathbb N$, let ${\mathcal{H}}^{\alpha,\beta}(\C^n)$
denote the vector space of polynomials $p(z_0,\ldots,z_m ,\bar
z_0,\ldots,\bar z_m)$ on $\C^n$ which
\begin{enumerate}
  \item are homogeneous of degree $\alpha$ in $(z_0,\ldots, z_m)$ and of degree $\beta$ in
$(\bar z_0,\ldots,\bar z_m)$;
  \item belong to the kernel of the differential operator
  $\displaystyle\sum_{i=0}^m\frac{\partial^2}{\partial z_i\partial\bar z_i}$.
\end{enumerate}

Then, ${\mathcal{H}}^{\alpha,\beta}(\mathbb{C}^n)$ is a finite dimensional vector
space.
It is non-zero except for the case where $n=1$ and $\alpha,\beta\ge1$.
The natural action of $K$ on polynomials,
\[
p(z_0,\ldots,z_m ,\bar z_0,\ldots,\bar z_m) \mapsto
p(g^{-1} (z_0,\ldots,z_m),\, \overline{g^{-1} (z_0,\ldots,z_m)})
\quad
(g \in K),
\]
 leaves ${\mathcal{H}}^{\alpha,\beta}(\mathbb{C}^n)$ invariant.
The resulting representations of $K$  on
${\mathcal{H}}^{\alpha,\beta}(\mathbb{C}^n)$, which we denote by the same symbol
${\mathcal{H}}^{\alpha,\beta}(\mathbb{C}^n)$,
are irreducible and
pairwise inequivalent for any such $\alpha, \beta$.
\vskip5pt

The restriction of ${\mathcal{H}}^{\alpha,\beta}(\mathbb{C}^n)$
to the unit sphere
$S^{2m+1} = \{ (z_0,\ldots,z_m) \in \mathbb{C}^n: \sum_{j=0}^m|z_j|^2=1 \}$
is injective
 and gives a complete orthogonal basis of
$L^2(S^{2m+1})$,
and we have a discrete sum decomposition
\begin{equation}\label{eqn:spab}
L^2(S^{2m+1}) \simeq
\sideset{}{^\oplus} \sum_{\alpha,\beta \in \mathbb{N}}
{\mathcal{H}}^{\alpha,\beta} (\mathbb{C}^n) |_{S^{2m+1}}
\quad
(m \ge 1).
\end{equation}
The case $m=0$ collapses to
\[
L^2(S^1) \simeq
\sideset{}{^\oplus} \sum_{\alpha \in \mathbb{N}}
{\mathcal{H}}^{\alpha,0} (\mathbb{C}^1) |_{S^1}
\oplus
\sideset{}{^\oplus} \sum_{\beta \in \mathbb{N}_+}
{\mathcal{H}}^{0,\beta} (\mathbb{C}^1) |_{S^1}.
\]

Fixing a $\mu \in \mathbb{C}$ we may extend functions on
$S^{2m+1}$ to homogeneous functions of degree $-(m+1+\mu)$.
The decomposition \eqref{eqn:spab} gives rise to the branching law
 ({\it{$K$-type formula}})
 with respect to the maximal compact subgroup.
\begin{lem}\label{lem:Kdeco}\emph{(Branching law for $G_1\downarrow K$).}
The restriction of $\pi_{\mu,\delta}$ to the subgroup $K$ of $G_1$
is decomposed into a discrete direct sum of pairwise inequivalent
representations:
\begin{alignat*}{2}
\pi_{\mu,\delta}|_K &\simeq \sideset{}{^\oplus}
\sum_{\substack{\alpha,\beta\in\mathbb{N}\\
                \alpha+\beta\equiv\delta\bmod 2}}
{\mathcal{H}}^{\alpha,\beta}(\C^n)
&&
(m \ge 1),
\\
\pi_{\mu,\delta}|_K &\simeq \sideset{}{^\oplus}
\sum_{\substack{\alpha\in\mathbb{N}\\
                \alpha\equiv\delta\bmod 2}}
{\mathcal{H}}^{\alpha,0}(\C)
\oplus
\sideset{}{^\oplus}
\sum_{\substack{\beta\in\mathbb{N}_+\\
                \beta\equiv\delta\bmod 2}}
{\mathcal{H}}^{0,\beta}(\C)
&\quad&
(m = 0).
\end{alignat*}
\end{lem}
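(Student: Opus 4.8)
The plan is to deduce Lemma \ref{lem:Kdeco} from the orthogonal decomposition \eqref{eqn:spab} of $L^2(S^{2m+1})$ together with the explicit $K$-picture of the induced representation $\pi_{\mu,\delta}$. First I would recall that $\pi_{\mu,\delta}$ is realized on $V_{\mu,\delta}$, the space of homogeneous functions on $\R^{2n}\setminus\{0\}$ of degree $-(n+\mu)$ and parity $\delta$, which by restriction to the unit sphere $S^{2n-1}=S^{2m+1}$ is identified (as a Hilbert space) with $L^2(S^{2m+1})_\delta$, the even part if $\delta=0$ and the odd part if $\delta=1$. The key point is that the $K=U(n)$-action on $V_{\mu,\delta}$, when transported to $L^2(S^{2m+1})$ via this restriction, is simply the geometric action of $U(n)$ on functions on the sphere coming from the linear action on $\C^n\cong\R^{2n}$ — the homogeneity twist by $|r|^{-n-\mu}(\operatorname{sgn}r)^\delta$ contributes nothing extra because $K$ acts by isometries preserving the origin, so the scalar $\mu$ drops out of the $K$-action entirely. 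This explains why the $K$-type formula is independent of $\mu$.

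Next I would combine two ingredients: the decomposition \eqref{eqn:spab} (and its $m=0$ counterpart) of $L^2(S^{2m+1})$ into the $K$-irreducibles ${\mathcal{H}}^{\alpha,\beta}(\C^n)|_{S^{2m+1}}$, which was already stated to be multiplicity-free with pairwise inequivalent summands; and the identification of the parity-$\delta$ subspace $L^2(S^{2m+1})_\delta$. Here one observes that a polynomial $p\in{\mathcal{H}}^{\alpha,\beta}(\C^n)$ is homogeneous of total degree $\alpha+\beta$ under the real scaling $z\mapsto rz$ ($r\in\R^\times$), hence its restriction to $S^{2m+1}$ lies in the even part of $L^2(S^{2m+1})$ when $\alpha+\beta$ is even and in the odd part when $\alpha+\beta$ is odd. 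Therefore the parity constraint $\delta$ on $V_{\mu,\delta}$ precisely selects those $(\alpha,\beta)$ with $\alpha+\beta\equiv\delta\bmod 2$, and intersecting \eqref{eqn:spab} with $L^2(S^{2m+1})_\delta$ yields the claimed sum over $\alpha+\beta\equiv\delta$. Since each ${\mathcal{H}}^{\alpha,\beta}(\C^n)$ is $K$-invariant and the sum \eqref{eqn:spab} is orthogonal, this restricted sum is again a direct sum decomposition into irreducibles.

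Finally I would treat the degenerate case $m=0$ separately: here $n=1$, $K=U(1)$, and $S^{2m+1}=S^1$; the space ${\mathcal{H}}^{\alpha,\beta}(\C^1)$ vanishes unless $\alpha=0$ or $\beta=0$ (the Laplacian $\partial_z\partial_{\bar z}$ kills only the holomorphic and anti-holomorphic monomials), which accounts for the split form of the second displayed formula, with the index ranges $\alpha\in\N$ and $\beta\in\N_+$ reflecting that ${\mathcal{H}}^{0,0}$ is counted once. The parity condition $\alpha\equiv\delta$ (resp.\ $\beta\equiv\delta$) is read off from $\alpha+\beta=\alpha$ (resp.\ $=\beta$) as before. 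I do not anticipate a serious obstacle here; the only point demanding slight care is the clean justification that the $K$-action is genuinely $\mu$-independent once one passes to the sphere model — i.e.\ that the cocycle defining $\pi_{\mu,\delta}$ restricted to $K$ is trivial — and the bookkeeping of parity versus the integer $\delta\in\Z/2\Z$. Both are routine given the setup already established.
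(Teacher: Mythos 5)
Your argument is correct and follows the same route as the paper: pass to the $K$-picture on $L^2(S^{2m+1})$, observe that the $K$-action is the purely geometric one (so $\mu$ drops out), decompose via \eqref{eqn:spab} into the pairwise inequivalent $K$-irreducibles $\mathcal{H}^{\alpha,\beta}(\C^n)$, and sort by the parity $\alpha+\beta\equiv\delta\bmod2$, with the $m=0$ case handled by the vanishing of $\mathcal{H}^{\alpha,\beta}(\C)$ for $\alpha,\beta\ge1$. The paper presents exactly this reasoning, only more tersely; your proposal simply makes the routine details explicit.
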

We shall refer to ${\mathcal{H}}^{\alpha, \beta}(\C^n)$
 as a $K$-type of the representation $\pi_{ \mu, \delta}$.

The restriction $G_1\downarrow K$ is multiplicity free. Therefore any $K$-inter\-twining operator (in particular, any $G_1$-intertwining operator)
acts as a scalar on every $K$-type by Schur's lemma. We give an explicit formula of this scalar for the Knapp--Stein intertwining operator:
$$
\mathcal T_{\mu,\delta}:V_{-\mu,\delta}\to V_{\mu,\delta},
$$
which is defined as the meromorphic continuation of the following integral operator
$$
(\mathcal{T}_{\mu,\delta} f)(\eta)
:=
\int_{S^{2n-1}} f(\xi)
\left\vert\omega(\xi,\eta)\right\vert|^{-\mu-n}\left({\mathrm{sgn}}\,\omega(\xi,\eta)\right)^\delta d\sigma(\xi).
$$
Here $d\sigma $ is the Euclidean measure on the unit sphere. Further, we normalize it by
\begin{equation}\label{eqn:nKS}
\widetilde{\mathcal{T}}_{\mu,\delta}:=\frac1{C_{2n}(\mu,\delta)}\mathcal{T}_{\mu,\delta},
\end{equation}
where
$$
C_{2n}(\mu,\delta):=2\pi^{\mu+n-\frac12}\times\left\{\begin{array}{cl}
\frac{\Gamma\left(\frac{1-\mu-n}2\right)}{\Gamma\left(\frac{\mu+n}2\right)}&
(\delta=0), \\
-i\frac{\Gamma\left(\frac{2-\mu-n}2\right)}
{\Gamma\left(\frac{\mu+n+1}2\right)}&(\delta=1).
                                                    \end{array}\right.
$$

\begin{prop}\label{prop:eigen} For $\alpha,\beta\in\mathbb N$, we set $\delta\equiv\alpha+\beta$ mod $2$. The normalized Knapp--Stein intertwining operator
$\widetilde{\mathcal{T}}_{\mu,\delta}$ acts on $\mathcal
H^{\alpha,\beta}(\C^n)$ as the following scalar
$$
(-1)^\beta\pi^{-\mu}\frac{\Gamma\left(\frac{\alpha+\beta+\mu+n}2\right)}{\Gamma\left(\frac{\alpha+\beta-\mu+n}2\right)}.
$$
\end{prop}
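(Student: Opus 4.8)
The plan is to compute the scalar by which $\widetilde{\mathcal{T}}_{\mu,\delta}$ acts on a single convenient vector in each $K$-type $\mathcal{H}^{\alpha,\beta}(\C^n)$, exploiting that the action is a scalar (Schur's lemma plus multiplicity-freeness of $G_1\downarrow K$, already established in Lemma \ref{lem:Kdeco}). The natural reduction is first to the unnormalized operator $\mathcal{T}_{\mu,\delta}$: it suffices to find a scalar $c_{\alpha,\beta}(\mu)$ with $\mathcal{T}_{\mu,\delta}\,p = c_{\alpha,\beta}(\mu)\,p$ for all $p\in\mathcal{H}^{\alpha,\beta}(\C^n)$, and then divide by $C_{2n}(\mu,\delta)$ and simplify the Gamma factors. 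To pin down $c_{\alpha,\beta}(\mu)$ I would choose a highest-weight-type vector, e.g. $p(z)=z_0^{\alpha}\bar z_0^{\beta}$ after a unitary change of coordinates that puts the evaluation point $\eta$ at a standard position (say the first basis vector $e_0\in\C^n\cong\R^{2n}$), so that $\omega(\xi,\eta)$ becomes a single real coordinate of $\xi$ on $S^{2n-1}$. A cleaner alternative, which I expect to be the actual route, is to pass to the $N$-picture (flat model): realize $V_{\mu,\delta}\simeq L^2(\R^{2n-1})$, where $\mathcal{T}_{\mu,\delta}$ becomes convolution on $\R^{2n-1}\cong\C^{n-1}\times\R$ against the kernel $|\,\cdot\,|^{-\mu-n}(\operatorname{sgn})^\delta$ composed with the ``inversion'' coming from $\omega$; then apply the Fourier transform and the classical formula for the Fourier transform of a homogeneous distribution $|x_1|^{-\mu-n}(\operatorname{sgn}x_1)^\delta$ on the line (a $\Gamma$-quotient with the factor $2\pi^{\mu+n-1/2}$ and the sign $-i$ in the odd case), which is exactly what the normalization constant $C_{2n}(\mu,\delta)$ is designed to cancel.

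The main steps, in order, would be: \emph{(i)} reduce to evaluating $\mathcal{T}_{\mu,\delta}$ on one explicit vector per $(\alpha,\beta)$, invoking Lemma \ref{lem:Kdeco}; \emph{(ii)} use $U(n)$-equivariance to normalize the base point and rewrite $|\omega(\xi,\eta)|^{-\mu-n}(\operatorname{sgn}\omega(\xi,\eta))^\delta$ as a function of a single coordinate; \emph{(iii)} carry out the resulting integral over $S^{2n-1}$ of $z_0^\alpha\bar z_0^\beta$ against that kernel, either directly in polar/bipolar coordinates adapted to the splitting $\C^n=\C e_0\oplus (\C e_0)^\perp$, or via the Fourier-analytic computation in the flat model; the key classical inputs are the beta-integral $\int_0^1 t^{a}(1-t)^{b}\,dt=B(a+1,b+1)$ and the Fourier transform of $|x|^{s}\operatorname{sgn}(x)^\delta$ on $\R$; \emph{(iv)} collect the $\Gamma$-factors, observe that (up to the elementary $\pi$-power and the factor $(-1)^\beta$ coming from the $\bar z_0^\beta$ versus $z_0^\beta$ parity / the sign in the odd case) the dependence on $\mu$ is precisely $\Gamma\!\left(\tfrac{\alpha+\beta+\mu+n}{2}\right)\big/\Gamma\!\left(\tfrac{\alpha+\beta-\mu+n}{2}\right)$ after dividing by $C_{2n}(\mu,\delta)$; \emph{(v)} conclude by meromorphic continuation in $\mu$, since both sides are meromorphic and the identity holds on the region of convergence.

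The part I expect to be the genuine obstacle is step \emph{(iii)} together with the bookkeeping in \emph{(iv)}: getting the integral of $z_0^\alpha\bar z_0^\beta$ against $|\omega(\xi,\eta)|^{-\mu-n}(\operatorname{sgn})^\delta$ over the sphere into closed form requires separating the ``$z_0$-direction'' from its orthogonal complement correctly, handling the homogeneity bookkeeping so that the answer depends on $\alpha,\beta$ only through $\alpha+\beta$ (which is forced by the fact that the left side is a scalar on an irreducible $K$-module, but must also come out of the computation), and tracking the origin of the factor $(-1)^\beta$ — it should arise either from $\overline{z_0}^\beta=(\,\overline{z_0}\,)^\beta$ picking up the conjugation, or, in the Fourier picture, from the sign in $\widehat{\operatorname{sgn}}$ interacting with the parity $\delta\equiv\alpha+\beta$. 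I would double-check this normalization in the rank-one/base case $n=1$ (where $\mathcal{H}^{\alpha,0}$ and $\mathcal{H}^{0,\beta}$ are one-dimensional and the formula reduces to the classical $SL(2,\R)$ computation) and in a low $K$-type such as $\alpha=\beta=0$, $\alpha=1,\beta=0$, to fix all constants, after which the general case follows by the equivariance and meromorphy already set up.
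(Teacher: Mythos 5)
Your ``cleaner alternative'' is indeed the paper's route: the proof consists of Lemma~\ref{lem:22}, which identifies the normalized operator $\widetilde{\mathcal{T}}_{\mu,\delta}$ with the symplectic Fourier transform $\mathcal{F}_{\mathrm{symp}}$ restricted to $V_{-\mu,\delta}$ (proved exactly by the classical Fourier transform of the homogeneous distribution $(t-i0)^{-\mu-n}$ and its even/odd split, which is what $C_{2n}(\mu,\delta)$ is tuned to absorb), followed by a Bochner--Hecke--type computation of the eigenvalue of $\mathcal{F}_{\mathrm{symp}}$ on the homogeneous extension of $\mathcal{H}^{\alpha,\beta}(\C^n)$; the paper outsources that second step to \cite[Theorem~2.1]{CKOP} for $\delta=0$ and notes the odd case is identical. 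Your steps (i), (iv), (v) and the cancellation you predict for $C_{2n}(\mu,\delta)$ all match. The $(-1)^\beta$ you are chasing comes from the interaction of $\mathcal{F}_{\mathrm{symp}}=\mathcal{F}_{\R^{2n}}\circ J^*$ with the bihomogeneity of $p\in\mathcal{H}^{\alpha,\beta}$: Bochner--Hecke for $\mathcal{F}_{\R^{2n}}$ on a real harmonic of degree $k=\alpha+\beta$ produces $i^{-k}$, while $p\circ J$ contributes $i^{\alpha}\bar{i}^{\beta}$ up to sign, and the residue after dividing by $C_{2n}(\mu,\delta)$ (which contains $-i$ when $\delta=1$) is precisely $(-1)^\beta$.

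One concrete defect in the ``direct integration'' variant you sketch first: the vector $p(z)=z_0^\alpha\bar z_0^\beta$ is \emph{not} harmonic once $\alpha\beta\ge 1$, since $\sum_j\partial^2/\partial z_j\partial\bar z_j\,(z_0^\alpha\bar z_0^\beta)=\alpha\beta\,z_0^{\alpha-1}\bar z_0^{\beta-1}\neq 0$; it therefore does not lie in $\mathcal{H}^{\alpha,\beta}(\C^n)$, and evaluating $\mathcal{T}_{\mu,\delta}$ on it would not produce the eigenvalue. A valid highest-weight vector for $U(n)$ on $\mathcal{H}^{\alpha,\beta}(\C^n)$ is $z_0^\alpha\bar z_{n-1}^\beta$ (two different coordinates), or one can avoid any choice of vector entirely by using the Bochner--Hecke identity, which is exactly what the paper's reduction via Lemma~\ref{lem:22} accomplishes. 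With that correction, your plan is sound and coincides with the paper's argument.
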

\begin{proof} See \cite[Theorem 2.1]{CKOP} for $\delta=0$. The proof for $\delta=1$ works as well by using Lemma \ref{lem:22}.\end{proof}
\begin{rem} Without normalization, the Knapp--Stein intertwining operator $\mathcal T_{\mu,\delta}$ acts on $\mathcal H^{\alpha,\beta}(\C^n)$
as
$$
\mathcal{T}_{\mu,\delta} \Big|_{\mathcal{H}^{\alpha,\beta}(\mathbb{C}^n)}
= (-1)^\beta A_{\alpha+\beta}(\mu) \operatorname{id},
$$
where $\delta\equiv\alpha+\beta$ mod $2$ and
$$
A_k(\mu):=2\pi^{n-\frac12}\frac{\Gamma\left(\frac{k+\mu+n}2\right)}
{\Gamma\left(\frac{k-\mu+n}2\right)}\times\left\{
\begin{array}{cl}
\frac{\Gamma\left(\frac{1-\mu-n}2\right)}{\Gamma\left(\frac{\mu+n}2\right)}& (k\in 2\mathbb N), \\
-i\frac{\Gamma\left(\frac{2-\mu-n}2\right)}{\Gamma\left(\frac{\mu+n+1}2\right)}&
(k\in 2\mathbb N+1),
\end{array} \right.
$$
\end{rem}

The symplectic Fourier transform ${\mathcal F}_{\mathrm{symp}}$, defined by (\ref{eqn:Fsymp}), may be written as:
$$
({\mathcal F}_{\mathrm{symp}} f)(Y)
= \int_{\mathbb{R}^{2n}} f(X) e^{-2\pi i\omega(X,Y)} dX=\left(\mathcal F_{\R^{2n}}f\right)(JY),
$$
where $J:\R^{2n}\to\R^{2n}$ is given by $J(x,\xi):=(-\xi,x)$.

For generic complex parameter $\mu$ (\emph{e.g.} $\mu\neq n,n+2,\ldots$ for $\delta=0$), the space $V_{\mu,\delta}^\infty$ of homogeneous functions on $\R^{2n}\setminus\{0\}$
 may be regarded as a subspace of the space $\mathcal S'(\R^{2n})$ of tempered
distributions, and we have the following commutative diagram:
\[
\begin{array}{rcccc}
   \mathcal{F_{\mathrm{symp}}} & :
      &\mathcal{S}'(\mathbb{R}^{2n}) & \stackrel{\sim}{\longrightarrow} & \mathcal{S}'(\mathbb{R}^{2n})
\\[1ex]
      && \cup   & \circlearrowright & \cup
\\[1ex]
      && V_{-\mu,\delta}  & \stackrel{\sim}{\longrightarrow} & V_{\mu,\delta}
\end{array}
\]
\begin{lem}\label{lem:22}
As operators that depend meromorphically on $\mu$, $\widetilde{\mathcal{T}}_{\mu,\delta}$ satisfy the following identity:
$$
\widetilde{\mathcal{T}}_{\mu,\delta}={{\mathcal F}_{\mathrm{symp}}}|_{V_{-\mu,\delta}}.
$$
\end{lem}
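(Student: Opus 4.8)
The plan is to compare the two meromorphic families of operators $\widetilde{\mathcal T}_{\mu,\delta}\colon V_{-\mu,\delta}\to V_{\mu,\delta}$ and ${\mathcal F}_{\mathrm{symp}}|_{V_{-\mu,\delta}}$ by testing them against the $K$-types ${\mathcal H}^{\alpha,\beta}(\mathbb C^n)$, exploiting the multiplicity-one decomposition of Lemma \ref{lem:Kdeco}. Since both operators are $K$-intertwining (the Knapp--Stein operator manifestly commutes with $K=U(n)$ because $\omega$ is $K$-invariant, and ${\mathcal F}_{\mathrm{symp}}$ commutes with $K$ because $K$ preserves $\omega$ and the Lebesgue measure on $\mathbb R^{2n}$), Schur's lemma forces each to act on ${\mathcal H}^{\alpha,\beta}(\mathbb C^n)$ by a scalar. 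It then suffices to show these scalars coincide for all $\alpha,\beta$ with $\alpha+\beta\equiv\delta\bmod 2$, and then invoke the uniqueness of meromorphic continuation.

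First I would record, from Proposition \ref{prop:eigen}, that $\widetilde{\mathcal T}_{\mu,\delta}$ acts on ${\mathcal H}^{\alpha,\beta}(\mathbb C^n)$ by
\[
(-1)^\beta\,\pi^{-\mu}\,\frac{\Gamma\!\left(\tfrac{\alpha+\beta+\mu+n}{2}\right)}{\Gamma\!\left(\tfrac{\alpha+\beta-\mu+n}{2}\right)}.
\]
Next I would compute the eigenvalue of ${\mathcal F}_{\mathrm{symp}}$ on the same $K$-type. Using the identity ${\mathcal F}_{\mathrm{symp}} = {\mathcal F}_{\mathbb R^{2n}}\circ J^*$ with $J(x,\xi)=(-\xi,x)$, and noting that $J\in U(n)$ acts on ${\mathcal H}^{\alpha,\beta}(\mathbb C^n)$ by multiplication by $i^{\beta-\alpha}$ (it is multiplication by $i$ on $\mathbb C^n$, hence by $i^\alpha(\bar i)^\beta = i^{\alpha-\beta}$ on degree-$(\alpha,\beta)$ polynomials — I would fix the sign carefully, the point being it is a fourth root of unity depending on $\alpha-\beta\bmod 4$), the computation reduces to the Euclidean Fourier transform of a homogeneous harmonic-type distribution. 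The classical Bochner formula for the Fourier transform of $p(x)\,|x|^{-\lambda}$ with $p$ a spherical harmonic — here adapted to the $\mathbb C^n$-bigraded harmonics ${\mathcal H}^{\alpha,\beta}$, which are the $U(n)$-refinement of the $O(2n)$-harmonics of degree $\alpha+\beta$ — yields precisely a ratio of Gamma functions of the shape $\Gamma\!\big(\tfrac{\alpha+\beta+\mu+n}{2}\big)/\Gamma\!\big(\tfrac{\alpha+\beta-\mu+n}{2}\big)$ together with an explicit power of $\pi$ and a unimodular phase.

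The main obstacle, and the step I would spend the most care on, is pinning down all the constants and phases so that the Bochner-type eigenvalue of ${\mathcal F}_{\mathrm{symp}}$ matches $(-1)^\beta\pi^{-\mu}$ times the Gamma ratio exactly, not merely up to a normalizing factor. In particular one must check: (i) the homogeneity bookkeeping — an element of $V_{-\mu,\delta}^\infty$ is homogeneous of degree $-(n-\mu)$, so its Fourier transform on $\mathbb R^{2n}$ is homogeneous of degree $-2n-(-(n-\mu)) = -(n+\mu)$, landing correctly in $V_{\mu,\delta}^\infty$; (ii) the phase $i^{\alpha-\beta}$ from $J$ combines with the phase $i^{-(\alpha+\beta)}$ (or similar) coming from the Fourier transform of the $O(2n)$-harmonic of degree $\alpha+\beta$ to produce exactly $(-1)^\beta$; (iii) the Gamma factors and powers of $\pi$ from the radial part of the Bochner formula reproduce $\pi^{-\mu}\Gamma\!\big(\tfrac{\alpha+\beta+\mu+n}{2}\big)/\Gamma\!\big(\tfrac{\alpha+\beta-\mu+n}{2}\big)$ after the normalization by $C_{2n}(\mu,\delta)^{-1}$ in \eqref{eqn:nKS} is folded in via the already-established Proposition \ref{prop:eigen}. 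Once the two scalars agree on a Zariski-dense set of $\mu$ for every $(\alpha,\beta)$, the two operators agree as meromorphic families on all of $V_{-\mu,\delta}$, which is the assertion. For $\delta=1$ the computation is identical in structure; alternatively one can deduce it from the $\delta=0$ case together with the relation between the odd and even parts already used in the proof of Proposition \ref{prop:eigen}.
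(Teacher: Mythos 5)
Your plan — compare the two $K$-intertwining operators eigenvalue by eigenvalue on the multiplicity-free $K$-types and invoke Schur plus meromorphic continuation — is sound in outline, but as written it is circular for $\delta=1$. The paper's proof of Proposition \ref{prop:eigen} for $\delta=1$ is the one-line remark ``the proof for $\delta=1$ works as well by using Lemma \ref{lem:22}''; that is, the eigenvalue formula for $\widetilde{\mathcal T}_{\mu,1}$ on $\mathcal H^{\alpha,\beta}(\C^n)$ is \emph{deduced} in the paper from the very identity you are trying to prove. Your first step (``record, from Proposition \ref{prop:eigen}, that $\widetilde{\mathcal T}_{\mu,\delta}$ acts by \dots'') and your fallback for $\delta=1$ (``deduce it from the $\delta=0$ case together with the relation \dots already used in the proof of Proposition \ref{prop:eigen}'') both run through this circle. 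Only the $\delta=0$ case of Proposition \ref{prop:eigen} is independently available from \cite{CKOP}; to make your argument honest for $\delta=1$ you would need to derive the eigenvalue of $\widetilde{\mathcal T}_{\mu,1}$ on $\mathcal H^{\alpha,\beta}(\C^n)$, $\alpha+\beta$ odd, from scratch.

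Even setting the circularity aside, the load-bearing step — the $U(n)$-refined Bochner formula giving the scalar by which $\mathcal F_{\mathrm{symp}}$ acts on $\mathcal H^{\alpha,\beta}(\C^n)$ extended to degree $-(n-\mu)$, together with the phase from $J$ — is exactly what you flag as ``the main obstacle'' and leave unverified; this is not a shortcut around the hard computation but a reorganization of it. The paper avoids the $K$-type decomposition entirely: it cites from \cite[Prop.\,2.2]{CKOP} a closed formula for $\mathcal F_{\R^{2n}}$ of an arbitrary homogeneous function as a surface integral with kernel $(\langle\xi,\eta\rangle-i0)^{-\mu-n}$, then splits $(t-i0)^{-\mu-n}$ into its even and odd parts in $t$ by a Plemelj-type identity, which produces the two kernels $|t|^{-\mu-n}$ and $|t|^{-\mu-n}\operatorname{sgn}t$ with precisely the normalizing constants $C_{2n}(\mu,\delta)$ of \eqref{eqn:nKS}; composing with $J^*$ turns $\langle\xi,\eta\rangle$ into $\omega(\xi,\eta)$ and yields the Knapp--Stein kernel for both $\delta=0$ and $\delta=1$ at once, so Proposition \ref{prop:eigen} for $\delta=1$ comes out as a corollary rather than being needed as an input. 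If you drop the appeal to Proposition \ref{prop:eigen} for $\delta=1$ and actually execute the bigraded Bochner computation with all phases and Gamma factors, your route does become a genuinely different (though longer) proof; as it stands there is a gap.
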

\begin{proof}
The proof parallels that of \cite[Proposition 2.3]{CKOP}.
For 
$h \in C^\infty(S^{2n-1})_\delta$,
we define a homogeneous function
$h_{\mu-n} \in V_{-\mu,\delta}^\infty$
by
\[
h_{\mu-n}(r \xi)
:= r^{\mu-n} h(\xi)
\quad
(r>0, \  \xi \in S^{2n-1}).
\]
Then we recall from \cite[Proposition 2.2]{CKOP} the following formula:
\[
\mathcal{F}_{\mathbb{R}^{2n}} h_{\mu-n}(s\eta)
= \frac{\Gamma(\mu+n) e^{-\frac{\pi i}{2}(\mu+n)}}
       {(2\pi)^{\mu+n} s^{\mu+n}}
  \int_{S^{2n-1}} (\langle \xi,\eta\rangle-i0)^{-\mu-n}
  h(\xi) d\sigma(\xi),
\]
where
$(\langle \xi,\eta\rangle - i0)^\lambda$
is a distribution of $\xi, \eta$,
obtained by the substitution of
$t = \langle \xi,\eta \rangle$
into the distribution
$(t-i0)^\lambda$ of one variable $t$.

To conclude, 
we use
\begin{eqnarray*}
&&(t-i0)^{-\mu-n}
\\
&=&e^{\frac\pi2i(\mu+n)}\left(\cos\frac{\pi(\mu+n)}2|t|^{-\mu-n}-i\sin\frac{\pi(\mu+n)}2|t|^{-\mu-n}{\mathrm{sgn}}\,t\right)\\
&=& \pi
e^{\frac\pi2i(\mu+n)}\left(\frac{|t|^{-\mu-n}}{\Gamma\left(\frac{1+\mu+n}2\right)\Gamma\left(\frac{1-\mu-n}2\right)}-
i\frac{|t|^{-\mu-n}{\mathrm{sgn}}\,t}{\Gamma\left(\frac{\mu+n}2\right)\Gamma\left(\frac{2-\mu-n}2\right)}\right).
\end{eqnarray*}
\end{proof}
We note that the Knapp--Stein intertwining operator induces a unitary equivalence of representations
$\pi_{i\lambda,\delta}$ and $\pi_{-i\lambda,\delta}$ of $G_1=Sp(n,\R)$:
\begin{equation}
\label{eqn:Spdual}
\pi_{i\lambda,\delta} \simeq \pi_{-i\lambda,\delta},
\quad\text{for any $\lambda \in \R$ and $\delta \in \Z/2\Z$}.
\end{equation}

\vskip10pt

\section{Algebraic Knapp--Stein intertwining operator}\label{sec:6}


We introduce yet another model $\mathcal U_{\mu,\delta}\simeq L^2(\R^{2m+1})$,
referred to as the \emph{non-standard model}, of
the representation $\pi_{\mu,\delta}$ as the image of the partial Fourier transform
$$
\mathcal F_\xi:\, L^2(\R^{1+m+m})\overset{\sim}{\to}L^2(\R^{1+m+m}),
$$
where $\xi$ denotes the last variable in $\R^m$. Then the space $\mathcal U_{\mu,\delta}$ inherits a $G_1$-module structure from $(\pi_{\mu,\delta}, V_{\mu,\delta})$
through $\mathcal F_\xi\circ\mathcal F_t$ (see Figure \ref{fig:3models}).

The advantage of this model is that the Knapp--Stein intertwining operator becomes an algebraic operator (see Theorem \ref{thm:KNalg} below). The price to pay is that the Lie algebra $\mathfrak k$ acts on $\mathcal U_{\mu,\delta}$ by second order differential
operators. We can still give an explicit form of minimal $K$-types on the model $\mathcal U_{\mu,\delta}$ when it splits into two irreducible components
($\mu=0,\delta=0,1$) by means of $K$-Bessel functions (Section\ref{sec:minK}).

We define an endomorphism of $L^2(\R^{2m+1})$ by
\begin{equation}\label{eqn:TH}
(T_{\mu,\delta}H)(\rho,x,\eta):=\left|\frac\rho2\right|^{-\mu}({\mathrm{sgn}}\rho)^\delta H\left(\rho,\frac2\rho\eta,\frac\rho2 x\right).
\end{equation}

Regarding $\widetilde{\mathcal T}_{\mu,\delta}$ as an operator on the  $N$-picture, we have
\begin{thm}[algebraic Knapp--Stein intertwining operator]\label{thm:KNalg}
For any $\mu\in\C$ and $\delta\in\Z/2\Z$, the following diagram commutes:
$$
\xymatrix{V_{-\mu,\delta}\ar[r]^{\widetilde{\mathcal T}_{\mu,\delta}}\ar[d]_{\mathcal F_\xi\mathcal F_t}&V_{\mu,\delta}
\ar[d]^{\mathcal F_\xi\mathcal F_t}\\
\mathcal U_{-\mu,\delta}\ar[r]^{T_{\mu,\delta}}&\mathcal U_{\mu,\delta}
}
$$
\end{thm}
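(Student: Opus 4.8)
The plan is to transport the known action of the normalized Knapp--Stein operator through the two successive partial Fourier transforms $\mathcal F_t$ and $\mathcal F_\xi$, using Lemma \ref{lem:22} which identifies $\widetilde{\mathcal T}_{\mu,\delta}$ with the symplectic Fourier transform $\mathcal F_{\mathrm{symp}}$ on $V_{-\mu,\delta}$. Since $\mathcal F_{\mathrm{symp}}$ differs from the full Euclidean Fourier transform $\mathcal F_{\R^{2n}}$ only by the linear substitution $J(x,\xi)=(-\xi,x)$ (as recorded just before Lemma \ref{lem:22}), the strategy is to express everything in terms of $\mathcal F_{\R^{2n}}$ and then carefully bookkeep which Fourier transforms in the chain $V_{-\mu,\delta}\overset{\mathcal F_t}{\to}\cdot\overset{\mathcal F_\xi}{\to}\mathcal U_{-\mu,\delta}$ are ``already accounted for'' and which remain. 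In the coordinates $(t,x,\xi)\in\R\times\R^m\times\R^m$ on $\R^{2n}$ (recall $n=m+1$), with $J$ acting on $\R^{2n}$, the composite $\mathcal F_\xi\mathcal F_t\circ\mathcal F_{\R^{2n}}^{-1}$ is, up to the reflection coming from $J$, a Fourier transform in the remaining pair of variables only, i.e.\ essentially a partial Fourier transform in $(x,\eta)$ together with a swap. Tracking the homogeneity degree of $V_{-\mu,\delta}$ through this computation is what produces the weight $|\rho/2|^{-\mu}(\operatorname{sgn}\rho)^\delta$ and the coordinate rescaling $(\rho,x,\eta)\mapsto(\rho,\tfrac2\rho\eta,\tfrac\rho2 x)$ in the definition \eqref{eqn:TH} of $T_{\mu,\delta}$.

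Concretely, I would proceed as follows. First, fix $h\in C^\infty(S^{2n-1})_\delta$ and its homogeneous extension $h_{\mu-n}\in V_{-\mu,\delta}^\infty$, so that the $N$-picture function is $h_{\mu-n}(1,2t,x,\xi)$ by \eqref{eqn:Npic}; by density and meromorphic continuation in $\mu$ it suffices to verify the identity on such functions. Second, apply Lemma \ref{lem:22} to replace $\widetilde{\mathcal T}_{\mu,\delta}h_{\mu-n}$ by $\mathcal F_{\mathrm{symp}}h_{\mu-n}=(\mathcal F_{\R^{2n}}h_{\mu-n})\circ J$, and invoke the explicit homogeneity of $\mathcal F_{\R^{2n}}h_{\mu-n}$: it is homogeneous of degree $-(m+1+(-\mu))$… wait, rather it is homogeneous of degree $\mu - n - (2n) \cdot$(no); more precisely $\mathcal F_{\R^{2n}}$ sends a function homogeneous of degree $\mu-n$ to one homogeneous of degree $-(\mu-n)-2n = -\mu-n$, consistent with landing in $V_{\mu,\delta}$. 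Third — and this is the computational heart — conjugate by $\mathcal F_\xi\mathcal F_t$ on the source and target. Writing $\mathcal F_{\R^{2n}} = \mathcal F_t\,\mathcal F_x\,\mathcal F_\xi$ (Fourier transform factors over the three groups of variables), one gets $\mathcal F_\xi\mathcal F_t \circ \mathcal F_{\R^{2n}} \circ (\mathcal F_\xi\mathcal F_t)^{-1} = \mathcal F_\xi\mathcal F_t\,\mathcal F_t\mathcal F_x\mathcal F_\xi\,\mathcal F_\xi^{-1}\mathcal F_t^{-1} = \mathcal F_t\mathcal F_x\mathcal F_\xi$-type expression that, after the cancellations and after inserting the reflection/swap $J$, collapses to the simple substitution operator on the right-hand side of \eqref{eqn:TH}. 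Fourth, handle the degree-$\delta$ parity and the $|\rho|$-power bookkeeping by comparing the one-variable distributions $(t\pm i0)^{\lambda}$, exactly as in the proof of Lemma \ref{lem:22}; the factors $2\pi^{\ldots}\Gamma(\cdots)/\Gamma(\cdots)$ in the normalization constant $C_{2n}(\mu,\delta)$ are precisely engineered so that all $\Gamma$-factors from the $\mathcal F_{\R^{2n}}h_{\mu-n}$ formula cancel, leaving only the elementary weight in \eqref{eqn:TH}.

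The step I expect to be the main obstacle is the third one: disentangling how the three partial Fourier transforms $\mathcal F_t, \mathcal F_x, \mathcal F_\xi$ interact with the linear map $J$ (which mixes the $x$ and $\xi$ blocks) and with the passage from the $K$-picture normalization (integration over $S^{2n-1}$) to the $N$-picture normalization (the substitution \eqref{eqn:Npic}, with the factor $2t$ rather than $t$, which introduces powers of $2$). Getting the scaling $(\tfrac2\rho\eta,\tfrac\rho2 x)$ and the exponent $-\mu$ on $|\rho/2|$ exactly right — rather than off by a factor of $2$ or a sign, or with $\rho$ versus $\rho/2$ — requires being scrupulous about these normalizations; conceptually, though, $T_{\mu,\delta}$ is ``just'' the symplectic Fourier transform read in the variables conjugate to $(x,\xi)$ after the $t$- and $\xi$-Fourier transforms have been pre- and post-composed, and its algebraic (non-integral) character is forced by the fact that a Fourier transform conjugated by Fourier transforms in a complementary set of variables is a coordinate change. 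Once the normalizations are pinned down on the dense family $\{h_{\mu-n}\}$, meromorphic continuation in $\mu$ and continuity extend the commuting-diagram identity to all $\mu\in\C$ and all of $\mathcal U_{-\mu,\delta}$, completing the proof.
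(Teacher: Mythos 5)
The overall plan — replace $\widetilde{\mathcal T}_{\mu,\delta}$ by $\mathcal F_{\mathrm{symp}}$ via Lemma \ref{lem:22}, then conjugate by $\mathcal F_\xi\mathcal F_t$ and track homogeneity — is essentially the paper's, so you have the right key lemma and the right variables to Fourier-transform in. But step 3, which you yourself flag as the computational heart, contains a genuine gap that invalidates the argument as written.

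The factorization $\mathcal F_{\R^{2n}}=\mathcal F_t\,\mathcal F_x\,\mathcal F_\xi$ does not hold: the ambient space $\R^{2n}=\R^{2m+2}$ has one more coordinate than the $N$-picture $L^2(\R^{1+2m})$. In the coordinates $(x_0,x_1,\dots,x_m,\xi_0,\xi_1,\dots,\xi_m)$ coming from \eqref{eqn:hn}, the $N$-picture is the restriction to the section $x_0=1$ (with $\xi_0=2t$), so the correct decomposition is $\mathcal F_{\R^{2n}}=\mathcal F_{x_0}\mathcal F_x\mathcal F_{\xi_0}\mathcal F_\xi$, and the $\mathcal F_{x_0}$ factor does not cancel against anything. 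Moreover, even with the correct factorization, your conjugation computation is circular: since partial Fourier transforms in disjoint variable groups commute, $\mathcal F_\xi\mathcal F_t\circ\mathcal F_{\R^{2n}}\circ(\mathcal F_\xi\mathcal F_t)^{-1}$ is just $\mathcal F_{\R^{2n}}$ again, not a coordinate change. The non-trivial simplification only appears once the swap $J^*$ is inside the conjugation, i.e.\ one must conjugate $\mathcal F_{\mathrm{symp}}=J^*\circ\mathcal F_{\R^{2n}}$ as a whole, and the reason the result is a pure substitution operator is that $J^*$ interchanges the first and second blocks of $n$ variables, so that post-composing with $\mathcal F_{\xi_0}\mathcal F_\xi$ turns the half of $\mathcal F_{\R^{2n}}$ acting on the $x$-block into an inverse transform that cancels.

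The paper packages exactly this observation as Lemma \ref{lem:Fn}(2): $\mathcal F_{\R^n}\circ\mathcal F_{\mathrm{symp}}\circ\mathcal F_{\R^n}^{-1}$ is the simple swap $f(x,\xi)\mapsto f(\xi,x)$, where $\mathcal F_{\R^n}$ is the partial Fourier transform in the last $n$ ambient variables only. This is the identity your argument is missing, and it is also the reason the restriction to the hyperplane $x_0=1$ is harmless: $\mathcal F_{\R^n}$ never touches $x_0$, so it commutes with that restriction, which is what turns $\mathcal F_t\mathcal F_\xi$ on the $N$-picture into $\mathcal F_{\R^n}$ on the ambient space (up to the factor-of-$2$ rescaling $\xi_0=2t$, giving $H(\rho,x,\eta)=\tfrac12(\mathcal F_{\R^n}f)(1,x,\tfrac\rho2,\eta)$). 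The remaining steps are then mechanical: apply the swap, then use the homogeneity $(\mathcal F_{\R^n}f)(rx,r^{-1}\eta)=|r|^\mu(\operatorname{sgn}r)^\delta(\mathcal F_{\R^n}f)(x,\eta)$ with $r=2/\rho$ to return to the section $x_0=1$, and this is precisely where the weight $|\rho/2|^{-\mu}(\operatorname{sgn}\rho)^\delta$ and the rescaling $(\tfrac2\rho\eta,\tfrac\rho2x)$ in \eqref{eqn:TH} are produced. You should replace the heuristic cancellation in step 3 with an explicit statement and proof of the two parts of Lemma \ref{lem:Fn} (the swap identity and the homogeneity of $\mathcal F_{\R^n}f$) before the argument can be considered complete.
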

To prove Theorem \ref{thm:KNalg}, we work on the ambient space $\R^{2n}(=\R^{2m+2})$.
 Let
 $\mathcal F_{\R^n}$ denote the partial Fourier transform
of the last $n$ coordinates in $\R^{2n}$.
\begin{lem}\label{lem:Fn}
\begin{enumerate}
\item[1)] For $f\in V_{-\mu,\delta}$, the function $\mathcal F_{\R^n}f$ satisfies
$$
(\mathcal F_{\R^n}f)(r x,r^{-1}\eta)=|r|^\mu({\mathrm{sgn}}\,r)^\delta(\mathcal F_{\R^n}f)(x,\eta),\quad r\in\R^\times,\,x,\eta\in\R^n.
$$
\item[2)] For $f\in\mathcal S'(\R^{2n}),x,\eta\in\R^n$, we have
$$
(\mathcal F_{\R^n}\circ{\mathcal F}_{\mathrm{symp}}\circ\mathcal F_{\R^n}^{-1})f(x,\xi)=f(\xi,x).
$$
\end{enumerate}
\end{lem}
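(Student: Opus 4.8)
The plan is to verify both statements by direct computation, treating them essentially as elementary properties of the Fourier transform combined with the homogeneity built into the definition \eqref{eqn:homoRN}. For part 1), I would start from the defining relation $f(rx, r\eta) = (\operatorname{sgn} r)^\delta |r|^{-n-(-\mu)} f(x,\eta) = (\operatorname{sgn}r)^\delta|r|^{\mu-n} f(x,\eta)$, which is the condition for $f \in V_{-\mu,\delta}$ with the $2n$-variable grouping $(x,\eta)$, $x,\eta\in\R^n$. Writing out $(\mathcal F_{\R^n} f)(x,\eta) = \int_{\R^n} f(x,y)\, e^{-2\pi i \langle y,\eta\rangle}\, dy$, I substitute $x \mapsto rx$, and in the integral change variables $y \mapsto ry$ (for $r\in\R^\times$), which produces a Jacobian factor $|r|^n$ together with the replacement $e^{-2\pi i\langle y,\eta\rangle}\mapsto e^{-2\pi i\langle y, r\eta\rangle}$. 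Combining the Jacobian $|r|^n$ with the homogeneity factor $|r|^{\mu-n}(\operatorname{sgn}r)^\delta$ coming from $f(rx,ry) = (\operatorname{sgn}r)^\delta|r|^{\mu - n}f(x,y)$ gives exactly $|r|^\mu(\operatorname{sgn}r)^\delta (\mathcal F_{\R^n}f)(x, r^{-1}\eta)$; renaming $\eta \mapsto r^{-1}\eta$ (equivalently reading off the relation at the point $(rx, r^{-1}\eta)$) yields the claimed identity. I should be slightly careful that $\mathcal F_{\R^n}$ here is applied to a tempered distribution, so the change of variables is justified by duality against Schwartz test functions rather than literally under an integral sign, but this is routine.

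For part 2), the key observation is the factorization noted just before the lemma, namely $\mathcal F_{\mathrm{symp}} = \mathcal F_{\R^{2n}} \circ (\text{composition with } J)$ where $J(x,\xi) = (-\xi, x)$; more precisely $(\mathcal F_{\mathrm{symp}} f)(Y) = (\mathcal F_{\R^{2n}} f)(JY)$. I would instead argue at the level of the full Fourier transform: write $\mathcal F_{\R^{2n}} = \mathcal F_{\R^n}^{(1)} \circ \mathcal F_{\R^n}^{(2)}$ as the composition of the partial transforms in the first and last $n$ variables (they commute), so that $\mathcal F_{\R^n} = \mathcal F_{\R^n}^{(2)}$ in the notation of the lemma. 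Then $\mathcal F_{\R^n}\circ \mathcal F_{\mathrm{symp}}\circ \mathcal F_{\R^n}^{-1}$ becomes, after inserting the formula for $\mathcal F_{\mathrm{symp}}$ in terms of $\mathcal F_{\R^{2n}}$ and $J$, a composition of partial Fourier transforms and the linear change of variables $J$; tracking how $J$ intertwines with the partial transforms (a partial Fourier transform in a block of variables, conjugated by the coordinate swap $J$, becomes the inverse partial transform in the swapped block) collapses all the Fourier transforms against their inverses and leaves only the coordinate permutation $(x,\xi)\mapsto(\xi,x)$. Concretely: $\mathcal F_{\mathrm{symp}}$ has integral kernel $e^{-2\pi i\omega(X,Y)} = e^{-2\pi i(\langle\xi,y\rangle - \langle x,\eta\rangle)}$, so composing with $\mathcal F_{\R^n}^{-1}$ in the $\xi$-variable (kernel $e^{+2\pi i\langle\xi,\cdot\rangle}$) integrates out $\xi$ to a delta function, and composing with $\mathcal F_{\R^n}$ in the final variable does likewise; carrying out these two elementary $\delta$-function integrations yields $f(\xi,x)$, i.e. the operator is pullback by the flip $(x,\xi)\mapsto(\xi,x)$.

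I do not anticipate a genuine obstacle here; the only thing requiring care is bookkeeping of signs and of which partial transform (direct versus inverse) appears after conjugating by $J$, together with the fact that $\mathcal F_{\mathrm{symp}}$, $\mathcal F_{\R^n}$ and the composition with $J$ are all unitary automorphisms of $\mathcal S'(\R^{2n})$, so that \eqref{eqn:homoRN}-type homogeneity conditions and the distributional identities make sense throughout. The cleanest writeup is probably to prove part 2) purely formally on the exponential kernels (or equivalently on the Schwartz space and then extend by continuity/duality to $\mathcal S'$), since both sides of the asserted identity are continuous operators on $\mathcal S'(\R^{2n})$ and agree on the dense subspace $\mathcal S(\R^{2n})$.
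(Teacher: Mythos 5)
Your proposal is correct and follows essentially the same route as the paper: part 1) is the same direct change-of-variables computation from the homogeneity defining $V_{-\mu,\delta}$, and part 2) is the same integral-kernel argument where the two partial Fourier transforms collapse the symplectic Fourier kernel into delta functions implementing the swap $(x,\xi)\mapsto(\xi,x)$. (One small bookkeeping slip: after the change of variables $y\mapsto ry$ you should land on $|r|^{\mu}(\operatorname{sgn}r)^\delta(\mathcal F_{\R^n}f)(x,r\eta)$, not $(\mathcal F_{\R^n}f)(x,r^{-1}\eta)$; your subsequent instruction to evaluate at $(rx,r^{-1}\eta)$ then does give the stated identity, so the logic is fine, but the intermediate display is garbled.)
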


\begin{proof}[Proof of Lemma \ref{lem:Fn}]
1) This is a straightforward computation.

2) For $f(x,\xi') \in \mathcal{S}(\mathbb{R}^{2n})$,
\begin{align*}
&(\mathcal{F}_{\mathbb{R}^n} \circ \mathcal{F}_{\mathrm{symp}}
  \circ \mathcal{F}_{\mathbb{R}^n}^{-1} f) (y,\eta')
\\
={}
& \int_{\mathbb{R}^n} \int_{\mathbb{R}^{2n}} \int_{\mathbb{R}^n}
  f(x,\xi') e^{2\pi i \langle \xi,\xi'\rangle}
  e^{-2\pi i(\langle \xi,y\rangle-\langle x,\eta\rangle)}
  e^{-2\pi i \langle\eta,\eta'\rangle}
  d \xi' dx d\xi d\eta
\\
={}
& \int_{\mathbb{R}^n\times\mathbb{R}^{2n}\times\mathbb{R}^n}
  f(x,\xi') e^{2\pi i \langle \xi'-y,\xi \rangle}
  e^{-2\pi i \langle \eta'-x,\eta \rangle}
  d\xi' dx d\xi d\eta
\\
={}
& \int_{\mathbb{R}^n} f(x,\xi') \delta(\xi'-y) \delta(\eta'-x) dx d\xi'
\\
={}
& f(\eta',y).
\end{align*}
\end{proof}

From now $x,\xi,\eta$ will stand again for elements of $\R^m$, where $m=n-1$.
\begin{proof}[Proof of Theorem \ref{thm:KNalg}]According to the choice of the isomorphism (\ref{eqn:hn}) between the Lie group $N$ and the standard Heisenberg Lie group, for $f\in V_{-\mu,\delta}$, we set
\begin{eqnarray*}
  F(t,x,\xi) &:=&f(1,x,2t,\xi), \\
  H(\rho,x,\eta) &:=& (\mathcal F_t\mathcal F_\xi F)(\rho,x,\eta),
\end{eqnarray*}
where $t,\rho\in\R$ and $ x,\xi\in\R^m$.
Then $H(\rho,x,\eta)=\frac12(\mathcal F_{\R^n}f)(1,x,\frac\rho2,\eta).$ Thus, according to Lemma \ref{lem:Fn},
\begin{eqnarray*}
  \mathcal F_t\mathcal{F}_\xi({\mathcal F}_{\mathrm{symp}}f)(\rho,x,\eta) &=& \frac12\mathcal{F}_{\R^n}({\mathcal F}_{\mathrm{symp}}f)(1,x,\frac\rho2,\eta) \\
   &=& \frac12(\mathcal{F}_{\R^n}f)(\frac\rho2,\eta,1,x) \\
   &=& \frac12\left|\frac\rho2\right|^{-\mu}({\mathrm{sgn}}\rho)^\delta(\mathcal{F}_{\R^n}f)(1,\frac2\rho\eta,\frac\rho2,\frac\rho2x) \\
  &=& \left|\frac\rho2\right|^{-\mu}({\mathrm{sgn}}\rho)^\delta H(\rho,\frac2\rho\eta,\frac\rho2x).
\end{eqnarray*}
Now Theorem follows from Lemma \ref{lem:22}.
\end{proof}
\vskip10pt

\section{Minimal $K$-type in a non-standard model}\label{sec:minK}

We give an explicit formula for two particular $K$-finite vectors of $\pi_{0,\delta}$
(in fact, minimal $K$-types of irreducible components
$\pi_{0,\delta}^\pm$ of $\pi_{0,\delta}$;
see Theorem \ref{thm:twosum} 1)) in the 
non-standard $L^2$-model $\mathcal U_{0,\delta}(\simeq L^2(\R^{2m+1}))$.
The main results (see Proposition \ref{prop:FFF}) show that minimal $K$-types are represented in terms of $K$-Bessel functions in this model. Although we do not use these results in the proof
of Theorem \ref{thm:twosum}, we think they are interesting of their own from the view point of geometric analysis of small representations. It is noteworthy that similar feature to Proposition \ref{prop:FFF} has been observed in the $L^2$-model of minimal representations of some other reductive groups (see e.g. \cite[Part III]{KobOr2}).

We begin with the identification
 $$
\C\overset{\sim}{\to}\mathcal H^{0,0}(\C^{m+1}),\quad 1\mapsto \bold{1}\quad
\text{(constant function)},
$$
and extend it to a homogeneous function on $\R^{2n}$ belonging to
$V_{0,0}$ (see \eqref{eqn:homoRN}). Using the formula
(\ref{eqn:Npic}) in the $N$-picture, 
we set 
$$h^+(t,x,\xi):=(1+4t^2 + |x|^2 + |\xi|^2)^{-\frac{m+1}{2}}.$$
Notice that $h^+(t,x,\xi)\in V_{0,0}\cap\mathcal H^{0,0}(\C^{m+1})$
in the $K$-type formula of $\pi_{0,0}$ (see Lemma \ref{lem:Kdeco}).

Let 
\begin{equation}\label{eqn:phipsi}
\psi(\rho,x,\eta)
 :=
 \left(1+|x|^2\right)^{\frac{1}{2}}
 \Bigl(\frac{\rho^2}{4}+|\eta|^2\Bigr)^{\frac{1}{2}}.
\end{equation}
Likewise we identify
\[
\mathbb{C}^{m+1}
\overset{\sim}{\to}
\mathcal{H}^{0,1}(\mathbb{C}^{m+1}),
\ 
b \mapsto \sum_{j=0}^m b_j \overline{z}_j,
\]
and set
\begin{equation}\label{eqn:hb}
h_b^-(t,x,\xi)
:= (1+4t^2 + |x|^2 + |\xi|^2)^{-\frac{m+2}{2}}
   (b_0(1-2it) + \sum_{j=1}^m b_j(x_j-i\xi_j)),
\end{equation}
\begin{equation}\label{eqn:phib}
\varphi_b(\rho,x,\eta)
:= \omega( \begin{pmatrix} (1+|x|^2)^{\frac{1}{2}} \\
                           (\frac{\rho^2}{4}+|\eta|^2)^{\frac{1}{2}}
           \end{pmatrix},
   b_0 \begin{pmatrix}  1 \\ \frac{\rho}{2}  \end{pmatrix}
   + \sum_{j=1}^m b_j  \begin{pmatrix} x_j \\ \eta_j \end{pmatrix}
         ),
\end{equation}
where $\omega$ denotes the standard symplectic form on $\mathbb{C}^2$
defined as in \eqref{eqn:symp}.
Then
$
h_b^- \in V_{0,1} \cap \mathcal{H}^{0,1} (\mathbb{C}^{m+1})
$
in the $K$-type formula of $\pi_{0,1}$ (see Lemma \ref{lem:Kdeco}).

Let $K_\nu(z)$ denote the modified Bessel function of the second
kind ($K$-Bessel function for short).
Then the $K$-finite vectors $h^+$ and $h^-_b$ ($b\in\C^{m+1}$) in the
standard model ($N$-picture) are of the following form in the non-standard
model $\mathcal U_{0,\delta}$.

\begin{prop}\label{prop:FFF}
\
\begin{enumerate}
\item[1)]
$\displaystyle
(\mathcal{F}_t\mathcal{F}_\xi h^+)(\rho,x,\eta)
 =
 \frac{\pi^{\frac{m+2}{2}}}{\Gamma(\frac{m+1}{2})}
 \,
 K_0 \left(2\pi\psi(\rho,x,\eta)\right)$.
\item[2)]
$\displaystyle
(\mathcal{F}_t\mathcal{F}_\xi h_b^-)(\rho,x,\eta)
 =
 \frac{\pi^{\frac{m+2}{2}}}{2\Gamma(\frac{m+2}{2})}
 \,
 \frac{\varphi_b(\rho,x,\eta)}{\psi(\rho,x,\eta)}
 \,
 \exp \left(-2\pi\psi(\rho,x,\eta)\right)$.
\end{enumerate}
\end{prop}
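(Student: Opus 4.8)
The plan is to compute the two partial Fourier transforms $\mathcal{F}_t\mathcal{F}_\xi$ directly, reducing both to a single classical integral formula for the Fourier transform of a power of a quadratic form, expressed through a $K$-Bessel function. First I would recall the standard formula: for $c>0$ and $a\in\R^N$,
\[
\int_{\R^N}(c^2+|y|^2)^{-s}e^{-2\pi i\langle y,a\rangle}\,dy
=\frac{2\pi^{s}}{\Gamma(s)}\Bigl(\frac{|a|}{c}\Bigr)^{s-\frac N2}c^{N-2s}K_{s-\frac N2}(2\pi c|a|),
\]
valid for $\Re s>\frac N2$ and extended meromorphically, with the convention $K_{-\nu}=K_\nu$. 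For part 1), the function $h^+(t,x,\xi)=(1+4t^2+|x|^2+|\xi|^2)^{-\frac{m+1}{2}}$ is transformed only in the variables $(t,\xi)\in\R^{1+m}$; after the harmless rescaling $t\mapsto t/2$ (which contributes a Jacobian factor and sends the dual variable $\rho$ to $\rho/2$, matching the $\frac{\rho^2}{4}$ in \eqref{eqn:phipsi}), we apply the formula above with $N=1+m$, $s=\frac{m+1}{2}$, and $c^2=1+|x|^2$. Then $s-\frac N2=0$, so the power of $|a|/c$ disappears and we get exactly $K_0(2\pi c|a|)=K_0(2\pi\psi(\rho,x,\eta))$, with the constant $\frac{2\pi^{(m+1)/2}}{\Gamma((m+1)/2)}\cdot\frac12=\frac{\pi^{(m+1)/2}}{\Gamma((m+1)/2)}$; one then checks this equals the stated $\frac{\pi^{(m+2)/2}}{\Gamma((m+1)/2)}$ up to the bookkeeping of the $\mathcal{F}_\xi$ normalization, which I would track carefully.

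For part 2), I would write $h_b^-$ as a sum of terms. The term $b_0(1-2it)(\cdots)^{-\frac{m+2}{2}}$ involves an extra factor of $t$ (equivalently $\partial_\rho$ on the Fourier side) beyond a pure radial power, and each term $b_j(x_j-i\xi_j)(\cdots)^{-\frac{m+2}{2}}$ carries a factor linear in the transformed variable $\xi_j$ (equivalently $\partial_{\eta_j}$) plus a factor $x_j$ untouched by the transform. The clean way is to observe that multiplication by a linear form $\ell(t,\xi)$ corresponds under $\mathcal{F}_t\mathcal{F}_\xi$ to applying a constant-coefficient first-order differential operator $D_\ell$ in $(\rho,\eta)$ to the transform of $(\cdots)^{-\frac{m+2}{2}}$. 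The latter is computed from the same master formula with $N=1+m$, $s=\frac{m+2}{2}$: now $s-\frac N2=\frac12$, and using $K_{1/2}(z)=\sqrt{\pi/(2z)}\,e^{-z}$ the $K$-Bessel function collapses to an elementary exponential, giving $(\cdots)^{-\frac{m+2}{2}}\mapsto (\text{const})\cdot\psi^{-1}e^{-2\pi\psi}$ (up to the $t\mapsto t/2$ rescaling as before). Applying the differential operators $D_\ell$ and collecting the linear-in-$b$ contributions, the $x_j$- and $\rho$-dependences reassemble precisely into the symplectic pairing $\varphi_b(\rho,x,\eta)$ of \eqref{eqn:phib}: the point is that $\partial_\rho\psi$ and $\partial_{\eta_j}\psi$ produce the components $\frac{\rho}{2}$ and $\eta_j$ of the first vector in that pairing, while the surviving factors $1$ and $x_j$ supply the second vector, and the symplectic form $\langle\xi,y\rangle-\langle x,\eta\rangle$ is exactly the combination that appears.

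The routine part is the constant-chasing through the two rescalings and the normalization of $\mathcal{F}_\xi$; I would do this once at the end. The main obstacle I anticipate is \emph{organizing} the computation in part 2) so that the answer visibly takes the form $\varphi_b/\psi\cdot e^{-2\pi\psi}$ rather than an unilluminating sum of terms: one must recognize that differentiating $\psi^{-1}e^{-2\pi\psi}$ produces both a $\psi^{-2}$ and a $\psi^{-1}$ contribution, and that after multiplying by the untransformed factors and summing over $j$ (including the $b_0$ term), the $\psi^{-2}$ pieces must cancel — leaving only the clean $\varphi_b/\psi$ prefactor. Verifying this cancellation is where the structure of the $\mathcal{H}^{0,1}$ $K$-type (i.e.\ that $h_b^-$ is built from the \emph{antiholomorphic} linear form, hence harmonic) is implicitly used; concretely it reflects the identity relating $\partial_\rho$ and the $\eta_j$-derivatives when applied to the radial function $\psi$, and I would phrase the computation to exploit that homogeneity rather than expand blindly.
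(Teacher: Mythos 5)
Your strategy is genuinely different from the paper's and is the more economical one. The paper computes $\mathcal{F}_\xi$ first (Lemma \ref{lem:Imu}, giving a $K$-Bessel in $\eta$), and then applies $\mathcal{F}_t$ to the resulting expression using three tabulated one-dimensional integrals from Erd\'elyi's tables, \eqref{eqn:ec}--\eqref{eqn:Ks}. You instead regard $\mathcal{F}_t\mathcal{F}_\xi$ (after rescaling $t\mapsto t/2$, which correctly accounts for the $4t^2$ and the $\rho^2/4$ in \eqref{eqn:phipsi}) as a single $(1+m)$-dimensional radial Fourier transform of a power of $c^2+|y|^2$, which needs only one master identity. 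For part 1) this is undeniably cleaner: $s-\frac{N}{2}=0$ gives $K_0$ at once. For part 2) it also works, and in fact more smoothly than you anticipate.

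Two concrete errors in your write-up, though. First, your master formula has a spurious extra factor $c^{N-2s}$; the correct identity is
\begin{equation*}
\int_{\R^N}(c^2+|y|^2)^{-s}\,e^{-2\pi i\langle y,a\rangle}\,dy
 = \frac{2\pi^{s}}{\Gamma(s)}\left(\frac{|a|}{c}\right)^{s-\frac{N}{2}}K_{s-\frac{N}{2}}(2\pi c|a|).
\end{equation*}
You can sanity-check this at $N=1$, $s=1$, where it gives $\frac{\pi}{c}e^{-2\pi c|a|}$; with your extra $c^{N-2s}=c^{-1}$ you would get $\frac{\pi}{c^2}e^{-2\pi c|a|}$, which is wrong. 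The extra factor looks like a double count: you kept the $a^{m-2\mu}$ factor from the version of the formula normalized with $\widetilde K_\nu$ (as in Lemma \ref{lem:Imu}) but then also used the unnormalized $K_\nu$.

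Second, and as a consequence, your assertion that the transform of the pure power $(\cdots)^{-\frac{m+2}{2}}$ is $(\text{const})\,\psi^{-1}e^{-2\pi\psi}$ is incorrect. With the correct formula ($N=m+1$, $s=\frac{m+2}{2}$, so $s-\frac{N}{2}=\frac{1}{2}$) and $K_{1/2}(z)=\sqrt{\pi/(2z)}\,e^{-z}$, one gets $(\text{const})\,c^{-1}e^{-2\pi\psi}$ with $c=(1+|x|^2)^{1/2}$ --- note $c^{-1}\ne\psi^{-1}$. This kills the difficulty you worried about: since $c$ is independent of $(\rho,\eta)$, applying $\partial_\rho$ or $\partial_{\eta_j}$ to $c^{-1}e^{-2\pi\psi}$ produces only a $\psi^{-1}$-type term (from $\partial\psi$), no $\psi^{-2}$ term at all, and there is nothing to cancel. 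Multiplying the untransformed factors $1,x_j$ by $c^{-1}$ and the derivative terms by $|a|^{-1}=(\rho^2/4+|\eta|^2)^{-1/2}$ assembles directly into $\varphi_b/\psi$ by the definition \eqref{eqn:phib} of the symplectic pairing. The ``harmonicity of the antiholomorphic linear form'' you invoke to explain a cancellation is therefore not needed here; the antiholomorphic structure only dictates the signs that align with $\omega$, which is a bookkeeping fact rather than a structural input.

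A small further remark: your constant $\pi^{(m+1)/2}$ in part 1) is actually what the correct computation yields; the displayed constant $\pi^{(m+2)/2}$ in the proposition appears to carry a stray $\sqrt{\pi}$ (one can see this already in the paper's own intermediate evaluation of $I_{\frac{m+1}{2}}$, which contradicts \eqref{eqn:Imu} at $m=1$, $\mu=1$, $a=1$). So do not wave that discrepancy away as ``normalization bookkeeping'' --- track it honestly, and you will find your computation is the correct one.
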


The rest of this section is devoted to the proof of Proposition \ref{prop:FFF}.
 In order to get simpler formulas we also use the following
normalization $\widetilde K_\nu(z):=\left(\frac z2\right)^{-\nu}K_\nu(z)$ \cite[Section 7.2]{KM-intopq}.

\begin{lem}\label{lem:Imu}
For every $\mu\in\R$ let us define the following function on $\R\times\R^m$ :
$$
I_\mu\equiv I_\mu(a,\eta):= \int_{\R^m}(a^2+|\xi|^2)^{-\mu} e^{-2i\pi \langle \xi,\eta\rangle}d\xi.
$$
Then,
\begin{equation}\label{eqn:Imu}
I_\mu(a,\eta)=\frac{2\pi^{\frac m2}}{\Gamma(\mu)} {a}^{ m-2\mu}\widetilde K_{\frac m2-\mu}(2\pi a|\eta|).
\end{equation}
\end{lem}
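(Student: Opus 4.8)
The plan is to recognize the integral $I_\mu(a,\eta)$ as a standard Fourier transform of a Bessel-type kernel. First I would reduce to the case $a=1$ by the substitution $\xi = a\xi'$, which gives
$$
I_\mu(a,\eta) = a^{m-2\mu}\int_{\R^m}(1+|\xi'|^2)^{-\mu}e^{-2\pi i\langle\xi',a\eta\rangle}\,d\xi' = a^{m-2\mu}I_\mu(1,a\eta),
$$
so it suffices to compute $I_\mu(1,\zeta)$ for $\zeta\in\R^m$. Since the integrand is radial in $\xi'$, the result depends only on $|\zeta|$, and I would invoke the classical formula for the Fourier transform of $(1+|\xi|^2)^{-\mu}$ on $\R^m$ (the Bessel potential kernel): up to the normalization conventions for the Fourier transform used in the paper (with $e^{-2\pi i\langle\xi,\eta\rangle}$), one has
$$
\int_{\R^m}(1+|\xi|^2)^{-\mu}e^{-2\pi i\langle\xi,\zeta\rangle}\,d\xi
= \frac{2\pi^{\frac m2}}{\Gamma(\mu)}\,|\zeta|^{\mu-\frac m2}\,\pi^{\mu-\frac m2}\,K_{\frac m2-\mu}(2\pi|\zeta|)\cdot(\text{const}),
$$
and then match constants against the stated right-hand side rewritten via $\widetilde K_\nu(z)=(z/2)^{-\nu}K_\nu(z)$.

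Concretely, the cleanest derivation I would carry out is via the subordination (Schwinger) identity $(1+|\xi|^2)^{-\mu}=\frac{1}{\Gamma(\mu)}\int_0^\infty s^{\mu-1}e^{-s(1+|\xi|^2)}\,ds$, valid for $\mathrm{Re}\,\mu>0$. Substituting into $I_\mu(1,\zeta)$, interchanging the order of integration (justified by absolute convergence), and using the elementary Gaussian Fourier transform $\int_{\R^m}e^{-s|\xi|^2}e^{-2\pi i\langle\xi,\zeta\rangle}\,d\xi = (\pi/s)^{m/2}e^{-\pi^2|\zeta|^2/s}$ reduces everything to
$$
I_\mu(1,\zeta) = \frac{\pi^{m/2}}{\Gamma(\mu)}\int_0^\infty s^{\mu-1-\frac m2}e^{-s}e^{-\pi^2|\zeta|^2/s}\,ds.
$$
This last integral is exactly an integral representation of the $K$-Bessel function: recalling $\int_0^\infty s^{\nu-1}e^{-s-b/s}\,ds = 2b^{\nu/2}K_\nu(2\sqrt b)$ with $\nu=\mu-\frac m2$ and $b=\pi^2|\zeta|^2$, one obtains $I_\mu(1,\zeta)=\frac{2\pi^{m/2}}{\Gamma(\mu)}(\pi|\zeta|)^{\mu-\frac m2}K_{\mu-\frac m2}(2\pi|\zeta|)$. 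Using $K_{-\nu}=K_\nu$ and the definition $\widetilde K_\nu(z)=(z/2)^{-\nu}K_\nu(z)$ this rearranges to $\frac{2\pi^{m/2}}{\Gamma(\mu)}\widetilde K_{\frac m2-\mu}(2\pi|\zeta|)$; combining with the scaling step gives \eqref{eqn:Imu}.

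The main obstacles are bookkeeping rather than conceptual: keeping the $2\pi$'s consistent with the paper's Fourier normalization, and handling the range of $\mu$. The Schwinger identity requires $\mathrm{Re}\,\mu>0$, and the final formula should then be extended to all $\mu\in\R$ (indeed all $\mu\in\C$ away from the poles of $1/\Gamma(\mu)$) by analytic continuation in $\mu$, noting that both sides are meromorphic in $\mu$ and that for $|\zeta|\neq 0$ the right-hand side is entire in $\mu$ (the $1/\Gamma(\mu)$ factor absorbing the poles coming from the non-convergence of the integral at small $\mu$); for $\mu$ with $\mathrm{Re}\,\mu\le m/2$ the integral $I_\mu$ itself must be read as an oscillatory/distributional Fourier transform, which is precisely where the regularization via $1/\Gamma(\mu)$ does its job. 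I would state this analytic-continuation remark explicitly but not belabor it, since the only use in what follows is for the specific values $\mu=\frac{m+1}{2}$ and $\mu=\frac{m+2}{2}$ appearing in Proposition \ref{prop:FFF}, both of which have $\mathrm{Re}\,\mu>0$.
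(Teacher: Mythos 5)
Your proof is correct and arrives at the same formula, but the route is genuinely different from the paper's. The paper passes to polar coordinates, uses the Bochner formula for the spherical Fourier transform (yielding a one-dimensional integral against a Bessel function of the first kind $J_{\frac m2-1}$), and then quotes a Hankel-transform identity from Erd\'elyi's tables [E, 8.5 (20)] to evaluate that radial integral directly in terms of $K_{\frac m2-\mu}$. You instead scale out $a$ first, apply the Schwinger (subordination) identity
$(1+|\xi|^2)^{-\mu}=\Gamma(\mu)^{-1}\int_0^\infty s^{\mu-1}e^{-s(1+|\xi|^2)}\,ds$, compute the resulting Gaussian Fourier transform in closed form, and recognize the remaining one-dimensional $s$-integral as the standard integral representation $\int_0^\infty s^{\nu-1}e^{-s-b/s}\,ds=2b^{\nu/2}K_\nu(2\sqrt b)$. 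Both are standard and both produce the normalized $\widetilde K_{\frac m2-\mu}$ after using $K_{-\nu}=K_\nu$. Your version is more self-contained: it avoids the Bochner formula and the table lookup, needing only the Gaussian transform and the widely memorable $K$-Bessel integral representation; it also makes the analytic-continuation step (validity initially only for $\operatorname{Re}\mu>m/2$, extension to the cases actually used via the $1/\Gamma(\mu)$ regularization) explicit, which the paper leaves implicit. The paper's version is shorter on the page once the table identity is accepted, and it keeps the computation radial throughout, which fits the spherical-harmonics theme of the surrounding sections. Either argument is acceptable here; the only values used later are $\mu=\frac{m+1}{2}$ and $\mu=\frac{m+2}{2}$, which are comfortably in the region where your Schwinger step converges, so the analytic-continuation remark, while good hygiene, is not strictly needed for the application.
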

\begin{proof}

Recall the classical Bochner formula
$$
\int_{S^{m-1}}e^{-2i\pi s\langle\xi,\xi'\rangle}d\sigma(\xi)
=2\pi s^{1-\frac m2} J_{\frac m2-1}(2\pi s), \quad{\mathrm{for}}\,\xi'\in S^{m-1},
$$
where $J_\nu(z)$ denotes the Bessel function of the first kind.
Then,
\begin{eqnarray*}
  I_\mu(a,\eta)
  &=& \int_0^\infty\int_{S^{m-1}} (a^2+r^2)^{-\mu}e^{-2i\pi r|\eta|\langle\xi,\frac{\eta}{|\eta|}\rangle} r^{m-1}dr d\sigma(\xi)\\
  &=& 2\pi |\eta|^{1-\frac m2}\int_0^\infty r^{\frac m2} J_{\frac m2-1}(2\pi r|\eta|)(r^2+a^2)^{-\mu}dr.
\end{eqnarray*}

According to \cite[8.5 (20)]{E} we have
$$
\int_0^\infty x^{\nu+\frac12}(x^2+a^2)^{-\mu-1}J_\nu(xy)(xy)^{\frac12}dx=
\frac{a^{\nu-\mu}\,y^{\mu+\frac12}\,K_{\nu-\mu}(ay)}{2^{\mu}\,\Gamma(\mu+1)},
$$
for $\operatorname{Re}a>0$ and
$-1<\operatorname{Re}\nu<2\operatorname{Re}\mu+\frac{3}{2}$,
which implies
\begin{eqnarray*}
I_\mu(a,\eta)&=&\frac{2\pi^{\mu}}{\Gamma(\mu)}\left(\frac {a}{|\eta|}\right)^{\frac m2-\mu}K_{\frac m2-\mu}(2\pi a|\eta|)\\
&=&
\frac{2\pi^{\frac m2}}{\Gamma(\mu)} {a}^{ m-2\mu}\widetilde K_{\frac m2-\mu}(2\pi a|\eta|).
\end{eqnarray*}
\end{proof}

In particular, we have
\begin{align*}
&I_{\frac{m+1}{2}}(a,\eta)
 =
 \frac{\pi^{\frac{m+2}{2}}}{\Gamma(\frac{m+1}{2})}
 \,
 \frac{\exp(-2\pi a|\eta|)}{a},
\\
&I_{\frac{m+2}{2}}(a,\eta)
 =
 \frac{2\pi^{\frac{m+2}{2}}}{\Gamma(\frac{m+2}{2})}
 \,
 \frac{|\eta|}{a}
 \,
 K_1(2\pi a |\eta|).
\end{align*}
 Here we used
$\widetilde K_{-\frac12}(z)=\frac{\sqrt{\pi}}2 e^{-z}$ in the first identity.
By a little abuse of notation,
we write $h_{(0)}^-$ and $h_{(1)}^-$ for
$h_{(1,0,\dots,0)}^-$
and
$h_{(0,1,0,\dots,0)}^-$,
respectively.
\begin{lem}\label{lem:F01} For $(t,x)\in\R\times\R^m$,
we set
$$
a\equiv a(t,x) := \sqrt{1+4t^2+|x|^2}.
$$
Then,
\begin{eqnarray*}
&&(\mathcal F_\xi h^+)(t,x,\eta)=
I_{\frac{m+1}{2}} (a(t,x),\eta),
\\
&&(\mathcal F_\xi h_{(1)}^-)(t,x,\eta)
=\Bigl(x_1+\frac{1}{2\pi} \frac{\partial}{\partial\eta_1}\Bigr)
 I_{\frac{m+2}{2}} (a(t,x),\eta).
\end{eqnarray*}
\end{lem}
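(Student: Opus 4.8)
The plan is to evaluate both partial Fourier transforms directly from the explicit $N$-picture formulas for $h^+$ and $h_{(1)}^-$, recognizing the resulting integrals as instances of the function $I_\mu$ introduced in Lemma~\ref{lem:Imu}.

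First I would set $a = a(t,x) = \sqrt{1+4t^2+|x|^2}$ and rewrite the $N$-picture expression as $h^+(t,x,\xi) = (a^2+|\xi|^2)^{-\frac{m+1}{2}}$. Since $m+1>m$, the integral defining $I_{\frac{m+1}{2}}$ converges absolutely, and by the very definition of $I_\mu$ one gets
$$
(\mathcal{F}_\xi h^+)(t,x,\eta)
= \int_{\R^m}(a^2+|\xi|^2)^{-\frac{m+1}{2}}e^{-2\pi i\langle\xi,\eta\rangle}\,d\xi
= I_{\frac{m+1}{2}}(a(t,x),\eta),
$$
which is the first identity.

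For the second, with $b=(0,1,0,\dots,0)$ the formula \eqref{eqn:hb} gives $h_{(1)}^-(t,x,\xi) = (a^2+|\xi|^2)^{-\frac{m+2}{2}}(x_1-i\xi_1)$, and I would split the Fourier integral into the contribution of $x_1$ and that of $-i\xi_1$. The first contribution is $x_1\, I_{\frac{m+2}{2}}(a,\eta)$, again straight from the definition of $I_\mu$. For the second, I would use the elementary identity $\xi_1 e^{-2\pi i\langle\xi,\eta\rangle} = -\frac{1}{2\pi i}\frac{\partial}{\partial\eta_1}e^{-2\pi i\langle\xi,\eta\rangle}$ and differentiate under the integral sign; this is legitimate because $|\xi_1|(a^2+|\xi|^2)^{-\frac{m+2}{2}} = O(|\xi|^{-(m+1)})$ is integrable on $\R^m$. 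This produces $\frac{1}{2\pi}\frac{\partial}{\partial\eta_1}I_{\frac{m+2}{2}}(a,\eta)$ for the second contribution, and adding the two gives
$$
(\mathcal{F}_\xi h_{(1)}^-)(t,x,\eta)
= \Bigl(x_1 + \frac{1}{2\pi}\frac{\partial}{\partial\eta_1}\Bigr)I_{\frac{m+2}{2}}(a(t,x),\eta).
$$

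There is no genuine obstacle here: the computation is a two-line manipulation once $h^+$ and $h_{(1)}^-$ are written in terms of $a^2+|\xi|^2$, and the only point deserving a word of justification is the interchange of differentiation and integration, which follows from the integrability estimate above. (Alternatively one could substitute the closed form of $I_{\frac{m+2}{2}}$ supplied by Lemma~\ref{lem:Imu} and differentiate the explicit expression, but that is unnecessary.)
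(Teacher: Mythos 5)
Your proof is correct and follows essentially the same route as the paper: both evaluate $\mathcal{F}_\xi h^+$ and $\mathcal{F}_\xi h_{(1)}^-$ by direct substitution, recognizing the integrals as $I_\mu$ and converting the $-i\xi_1$ factor into $\frac{1}{2\pi}\frac{\partial}{\partial\eta_1}$ acting outside the integral. Your extra sentence justifying the interchange of differentiation and integration via the $O(|\xi|^{-(m+1)})$ bound is a welcome bit of rigor that the paper leaves implicit.
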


\begin{proof}

By definition
\begin{equation*}
(\mathcal F_\xi h^+)(t,x,\eta)=\int_{\R^m}
({1+4t^2+|x|^2}+|\xi|^2)^{-\frac{m+1}2}
e^{-2\pi i\langle\xi,\eta\rangle}
d\xi
\end{equation*}

\begin{eqnarray*}
( \mathcal F_\xi
h_{(1)}^- )(t,x,\eta)&=& \int_{\R^m}
({1+4t^2+|x|^2}+|\xi|^2)^{-\frac{m+2}2}(x_1-i\xi_1)
e^{-2\pi i\langle\xi,\eta\rangle}d\xi\\
&=&
\Bigl(x_1+\frac{1}{2\pi}\frac{\partial}{\partial\eta_1}\Bigr) I_{\frac{m+2}{2}}
\left(\sqrt{1+4t^2+|x|^2},\eta\right).
\end{eqnarray*}
Hence Lemma \ref{lem:F01} is proved.
\end{proof}

\begin{proof}[Proof of Proposition \ref{prop:FFF}]
We recall from \cite[vol.~I, 1.4 (27); 1.13 (45); 2.13 (43)]{E}
 the following formulas:
For $\operatorname{Re}d>0$,
$\operatorname{Re}c>0$
and $s>0$, 
\begin{align}
&\int_0^\infty
 \frac{\exp(-d(t^2+c^2)^{\frac{1}{2}})}
      {(t^2+c^2)^{\frac{1}{2}}}
 \,
 \cos(st)dt
\label{eqn:ec}
=
 K_0(c(s^2+d^2)^{\frac{1}{2}}).
\\
&
\nonumber
\\
&\int_0^\infty
 \frac{K_\nu(d(t^2+c^2)^{\frac{1}{2}})}
      {(t^2+c^2)^{\frac{\nu}{2}}}
 \,
 \cos(st)dt
\label{eqn:Kc}
= \sqrt{\frac{\pi}{2}}
   \,\frac{
   K_{\nu-\frac{1}{2}}
   (c(s^2+d^2)^{\frac{1}{2}})}{d^{\nu}
   c^{\nu-\frac{1}{2}} (s^2+d^2)^{\frac{1}{4}-\frac{1}{2}\nu}}
\\
&
\phantom{\int_0^\infty
 \frac{K_\nu(d(t^2+c^2)^{\frac{1}{2}})}
      {(t^2+c^2)^{\frac{\nu}{2}}}
 \,
 \cos(st)dt
= \sqrt{\frac{\pi}{2}}
   \,\frac{
   K_{\nu-\frac{1}{2}}
   (c(s^2+d^2)^{\frac{1}{2}})}{d^{\nu}
   c^{\nu-\frac{1}{2}} (s^2+d^2)^{\frac{1}{4}-\frac{1}{2}\nu}}
}
\llap{$
= 2^{\nu-1} \sqrt{\pi} \, d^{-\nu} \, c^{1-2\nu}
   \,
   \widetilde{K}_{-\nu+\frac{1}{2}}
   (c(s^2+d^2)^{\frac{1}{2}}).
$}
\nonumber
\\
&
\nonumber
\\
&\int_0^\infty
 \frac{tK_1(d(t^2+c^2)^{\frac{1}{2}})}
      {(t^2+c^2)^{\frac{1}{2}}}
 \sin(st)dt
 = \frac{\pi s}{2d}
   \frac{\exp(-c(s^2+d^2)^{\frac{1}{2}})}
        {(s^2+d^2)^{\frac{1}{2}}}.
\label{eqn:Ks}
\end{align}

We apply the formulas \eqref{eqn:ec} and \eqref{eqn:Kc} with
$d = 4\pi|\eta|$,
$c = \frac{1}{2}(1+|x|^2)^{\frac{1}{2}}$
and $s = 2\pi\rho$.
In view that
$a \equiv a(t,x) = 2(t^2+c^2)^{\frac{1}{2}}$
and
$2\pi\psi(\rho,x,\eta) = c(s^2+d^2)^{\frac{1}{2}}$,
we get
\begin{align*}
&\int_{-\infty}^\infty
 \frac{\exp(-2\pi a|\eta|)}{a}
 \,
 e^{-2\pi i t \rho} dt
 =
 K_0(2\pi \psi(\rho,x,\eta)),
\\
&\int_{-\infty}^\infty
 \frac{K_1(2\pi a|\eta|)}{a}
 \,
 e^{-2\pi i t\rho} dt
 =
 \frac{1}{4|\eta|(1+|x|^2)^{\frac{1}{2}}}
 \,
 \exp(-2\pi\psi(\rho,x,\eta)).
\end{align*}
Here, we have used again
$\widetilde{K}_{-\frac{1}{2}}(z)
 = \frac{\sqrt{\pi}}{2} \, e^{-z}$
for the second equation.
Thus the first statement has been proved.

To see the second statement,
it is sufficient to treat the following two cases:
$b = (1,0,\dots,0)$
and
$b = (0,1,0,\dots,0)$.
We use
\begin{align*}
\mathcal{F}_t\mathcal{F}_\xi h_{(1)}^-
&= \mathcal{F}_t \left(\Bigl(x_1+\frac{1}{2\pi}
   \, \frac{\partial}{\partial\eta_1}\Bigr)
   I_{\frac{m+2}{2}} (a(t,x),\eta)\right)
\\
&= \Bigl(x_1+\frac{1}{2\pi}
   \, \frac{\partial}{\partial\eta_1}\Bigr)
   \mathcal{F}_t
   \Bigl(I_{\frac{m+2}{2}} (a(t,x),\eta)\Bigr).
\end{align*}
Now use
\[
\Bigl(x_1+\frac{1}{2\pi} \, \frac{\partial}{\partial\eta_1}\Bigr)
\frac{\exp(-2\pi\psi(\rho,x,\eta))}{(1+|x|^2)^{\frac{1}{2}}}
=
\frac{\varphi_{(1)}(\rho,x,\eta) \exp(-2\pi\psi(\rho,x,\eta))}
     {\psi(\rho,x,\eta)}.
\]
The case $b = (1,0,\dots,0)$ goes similarly by using the formula
\eqref{eqn:Ks}. 
\end{proof}

\section{Branching law for $GL(2n,\R)\downarrow Sp(n,\mathbb R)$}\label{sec:prf}

From now we give a proof of Theorem \ref{thm:global} with emphasis on geometric analysis involved.

Our strategy is the following. Suppose $P$ is a closed subgroup of a Lie group $G$,
$\chi: P \to \C^\times$ a unitary character, and $\mathcal{L} := G
\times_P \chi$
 a $G$-equivariant line bundle
over $G/P$. We write $L^2(G/P,\mathcal{L})$ for the Hilbert space
consisting of $L^2$-sections for the line bundle $\mathcal{L}
\otimes (\Lambda^{top} T^*(G/P))^{\frac{1}{2}}$. Then the group $G$
acts on $L^2(G/P,\mathcal{L})$ as a unitary representation, to be
denoted by $\pi_\chi^G$, by translations. 

If $(G,H)$ is a reductive
symmetric pair and $P$ is a parabolic subgroup of $G$, then there
exist finitely many open $H$-orbits $\mathcal{O}^{(j)}$ on the real
flag variety $G/P$ such that ${\cup}_j \mathcal{O}^{(j)}$ is open
dense in $G/P$. (In our cases below, the number of open $H$-orbits
is at most two.) Applying the Mackey theory, we see that the
restriction of the unitary representation $\pi_\chi^G$ to the
subgroup $H$ is unitarily equivalent to a finite direct sum:
\[
\pi_\chi^G|_H \simeq \bigoplus_j L^2(\mathcal{O}^{(j)},
\mathcal{L}|_{\mathcal{O}^{(j)}}).
\]
Thus the branching problem is reduced to the irreducible
decomposition of
$L^2(\mathcal{O}^{(j)},\mathcal{L}|_{\mathcal{O}^{(j)}})$,
equivalently, the Plancherel formula for the homogeneous line bundle
$\mathcal{L}|_{\mathcal{O}^{(j)}}$ over open $H$-orbits $\mathcal{O}^{(j)}$.

In our specific setting, where $G = GL(N,\R)$ and $P=P_N$ (see (\ref{eqn:PN})), 
the base space $G/P$
is the real projective space $\mathbb{P}^{N-1}\R$. 
For
$(\lambda,\delta) \in \R \times \Z/2\Z$, we define a unitary character $\chi_{i\lambda,\delta}$
of $P_N$ by
$$
\chi_{i\lambda,\delta}
\left(
\begin{matrix}
a & {}^t b\\
0 & C
\end{matrix}
\right)
:=\vert a\vert^\lambda(\operatorname{sgn} a)^\delta,\; a\in GL(1,\R),\,C\in GL(N-1,\R),\, b\in
\R^{N-1},
$$
in the matrix realization of $P_N$. 
Then $\pi_{\chi_{i\lambda,\delta}}^G$
coincides with $\pi_{{i\lambda,\delta}}^G$ in previous notation.
In this and the next three sections, we find the explicit
irreducible decomposition of
$L^2(\mathcal{O}^{(j)},\mathcal{L}|_{\mathcal{O}^{(j)}})$ with respect to $\pi_{i\lambda,\delta}^G$.

We begin with the case $H=G_1$, \emph{i.e.} 
$$
(G,H)\equiv\left( GL(2n,\R),\, Sp(n,\R)\right).
$$
As we have already seen in Section \ref{sec:4} the group $G_1$ acts transitively on $G/P_N$, and
we have the following unitary equivalence of unitary representations of $G_1=Sp(n,\R)$:
$$
\pi_{{i\lambda,\delta}}^G\vert_{G_1}\simeq \pi_{{i\lambda,\delta}}^{G_1}.
$$
Here $\pi_{{i\lambda,\delta}}^{Sp(n,\R)}$ is a unitary representation of $Sp(n,\R)$
induced from the maximal parabolic subgroup $P=G_1\cap P_N\simeq
\left( GL(1,\R)\times Sp(n-1,\R)\right)\ltimes H^{2n-1}$.

Thus the following two statements are equivalent.

\begin{thm}\label{thm:2}
The restriction of $\pi_{i\lambda,\delta}^{GL(2n,\R)}$ from
$GL(2n,\R)$ to $Sp(n,\R)$ stays irreducible for any $\lambda\in\R^\times$ and
$\delta\in\{0,1\}$. It splits into two irreducible components for $\lambda=0,\delta=0,1$ and $n\geq2$.
\end{thm}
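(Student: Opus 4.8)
My plan is to read the answer off the $MAN$-branching law of Lemma~\ref{lem:1.8}, which presents $V_{i\lambda,\delta}$ as the orthogonal sum of the four pairwise inequivalent irreducible $MAN$-modules $\mathcal H_{\pm}^2(W_+)$, $\mathcal H_{\pm}^2(W_-)$. Consequently every closed $G_1$-invariant subspace is the orthogonal sum of a subfamily of these four ``Hardy pieces'', so $\pi_{i\lambda,\delta}$ has at most four irreducible constituents and the commutant of $\pi_{i\lambda,\delta}(G_1)$ is contained in the diagonal algebra $\mathbb C^4$. I will use two features of this picture: after the partial Fourier transform $\mathcal F_t$ in the central coordinate $t$ of $N\simeq H^{2n-1}$, the piece $\mathcal H_{\varepsilon}^2(W_\bullet)$ is exactly the set of vectors supported in the half-line $\{\varepsilon\rho>0\}$ (Lemma~\ref{lem:Hardy}); and, since $\pi_{i\lambda,\delta}|_K$ is multiplicity free (Lemma~\ref{lem:Kdeco}), every $G_1$-invariant subspace is also a sum of $K$-types $\mathcal H^{\alpha,\beta}(\mathbb C^n)$. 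So the whole problem reduces to deciding which subfamilies of Hardy pieces can simultaneously be sums of $K$-types, i.e.\ to understanding how the $K$-types lie across the four pieces.

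The first step, valid for every $\lambda$, is a reduction to ``at most two constituents''. For $n\ge2$ each $K$-type $\mathcal H^{\alpha,\beta}(\mathbb C^n)$ contains, in the $N$-picture, a nonzero function that is \emph{even} in $t$: writing $\zeta_0$ for the complex coordinate carrying $t$ (so $|\zeta_0|^2=1+4t^2$ on the affine chart), one takes a harmonic polynomial using $\zeta_0,\bar\zeta_0$ only through $|\zeta_0|^2$, which exists because $n-1\ge1$ (e.g.\ $h^+$ for $\mathcal H^{0,0}$ and $(x_1-i\xi_1)(1+4t^2+|x|^2+|\xi|^2)^{-(m+2)/2}$ for $\mathcal H^{0,1}$). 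Such a function has $\mathcal F_t$ even and nonzero in $\rho$, hence meets both half-lines, so it lies in no single Hardy piece and in no union of the two $\{\rho>0\}$-pieces or of the two $\{\rho<0\}$-pieces. Since for unitary representations invariant subspaces have invariant orthocomplements, this forces $\pi_{i\lambda,\delta}$ to have \emph{at most two} irreducible constituents, each of the form $\mathcal H_+^2(W_a)\oplus\mathcal H_-^2(W_b)$ (one $\{\rho>0\}$-piece paired with one $\{\rho<0\}$-piece), and a two-constituent splitting can occur only if exactly one of the two partitions of the four pieces into such pairs — the ``$W$-partition'' $\{\mathcal H_\pm^2(W_+)\}\cup\{\mathcal H_\pm^2(W_-)\}$, or the ``mixed partition'' — is $G_1$-stable.

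For $\lambda=0$, $n\ge2$, the splitting occurs, and I would exhibit it through the normalized Knapp--Stein operator $\widetilde{\mathcal T}_{0,\delta}\colon V_{0,\delta}\to V_{0,\delta}$: it is $G_1$-equivariant and, by Proposition~\ref{prop:eigen} at $\mu=0$, acts on $\mathcal H^{\alpha,\beta}(\mathbb C^n)$ by the scalar $(-1)^\beta$, so it is an involution whose eigenspaces $V_{0,\delta}^{\pm}=\bigoplus_{\beta\ \text{even/odd}}\mathcal H^{\alpha,\beta}(\mathbb C^n)$ are closed $G_1$-invariant subspaces, both nonzero since all $\mathcal H^{\alpha,\beta}(\mathbb C^n)\ne0$ when $n\ge2$. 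Thus $\pi_{0,\delta}$ is reducible, and by the previous step it is \emph{exactly} the sum of the two irreducibles $V_{0,\delta}^{+}$ and $V_{0,\delta}^{-}$. Which pair of Hardy pieces each $V_{0,\delta}^{\pm}$ equals can then be pinned down in the non-standard model, where $\widetilde{\mathcal T}_{0,\delta}$ becomes the algebraic operator $T_{0,\delta}$ of Theorem~\ref{thm:KNalg}, to be matched against the involution $\mathfrak S\mapsto\mathfrak S^{\dag_\rho}$ that cuts out $W_\pm$ (Lemmas~\ref{lem:invol}--\ref{lem:Vpm}); the outcome is the threefold description of $\pi_{0,\delta}^{\pm}$ recorded in Theorem~\ref{thm:twosum}.

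For $\lambda\ne0$ it is enough, by the second step, to show that no partition of the four pieces into pairs is $G_1$-stable, and for this it suffices to produce a single $K$-type with nonzero component in each of $\mathcal H_+^2(W_+)$, $\mathcal H_-^2(W_+)$, $\mathcal H_+^2(W_-)$, $\mathcal H_-^2(W_-)$. This is where the real work lies; I would settle it by the same kind of computation as in Proposition~\ref{prop:FFF}, applied to a minimal $K$-type — for $\delta=0$ the one carried by $h^+_{i\lambda}=(1+4t^2+|x|^2+|\xi|^2)^{-(n+i\lambda)/2}$, which is even in $t$ and so already meets both half-lines. Passing to the operator-calculus/non-standard model replaces the $K$-Bessel function $K_0(2\pi\psi(\rho,x,\eta))$ of Proposition~\ref{prop:FFF} by a $\widetilde K_\nu$ of complex order $\nu$ depending on $\lambda$, together with separate powers of $|\eta|$ and of $(1+|x|^2)$; for $\lambda\ne0$ the resulting function fails to be invariant under $\mathfrak S\mapsto\mathfrak S^{\dag_\rho}$ — under which, by Lemma~\ref{lem:invol}, $(\rho,x,\eta)\mapsto(\rho,\tfrac2\rho\eta,\tfrac\rho2 x)$, and under which the special argument $\psi$ occurring at $\lambda=0$ happens to be invariant — so the $K$-type has nonzero component in both $W_+$ and $W_-$, hence in all four pieces, and $\pi_{i\lambda,\delta}$ is irreducible. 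Equivalently one may run the computation through the Weyl element $\sigma$: $\pi_{i\lambda,\delta}(\sigma)$ interchanges the $\{\rho>0\}$- and $\{\rho<0\}$-pieces (conjugation by $\sigma$ carries $\overline N$ to $N$ by an inversion-type Bruhat cocycle reversing the center) and, for $\lambda\ne0$, genuinely mixes $W_+$ with $W_-$ rather than permuting the four pieces. The main obstacle is exactly this last estimate: quantifying how the twist by $i\lambda$ destroys the $\mathfrak S\mapsto\mathfrak S^{\dag_\rho}$-symmetry present at $\lambda=0$, whose vanishing at $\lambda=0$ is precisely what turns the Knapp--Stein operator into an involution and lets $\pi_{0,\delta}$ break into two pieces.
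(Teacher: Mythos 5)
Your proposal matches the paper closely for the $\lambda=0$ case, which is the only case the paper actually proves directly: the paper cites \cite[Theorem 7.3]{KOPU} for irreducibility when $\lambda\neq0$ and gives ``yet another proof'' only for $\lambda=0$. Your decomposition into four $MAN$-modules, the Knapp--Stein involution cutting out $V_{0,\delta}^\pm$, the identification with pairs of Hardy pieces via the non-standard model and Lemma~\ref{lem:invol}, and the use of even-in-$t$ $K$-finite vectors to show no single Hardy piece is $G_1$-stable are exactly the paper's argument (Theorem~\ref{thm:twosum} and the lemma following it). One small refinement you make: you claim \emph{every} $K$-type $\mathcal H^{\alpha,\beta}(\C^n)$ with $n\ge2$ contains an even-in-$t$ vector, which gives the uniform ``at most two constituents'' bound for all $\lambda$; the paper only needs (and only checks) this for the four lowest $K$-types $f_{\alpha,\beta}$, $\alpha,\beta\in\{0,1\}$. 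Your claim is in fact correct (for $m\ge2$ one can take $z_1^\alpha\bar z_2^\beta$; for $m=1$ a short recursion on the coefficients of $\sum_k c_k|z_0|^{2k}z_1^{\alpha-k}\bar z_1^{\beta-k}$ produces a nonzero harmonic polynomial with $a_0=b_0$ in every monomial), but you state it without justification.

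For $\lambda\ne0$, your approach is genuinely different from the paper's (which is just a citation), and the strategy is sound: by your Step~1 it suffices to exhibit a single $K$-type with nonzero projection onto all four Hardy pieces, and you propose to verify this for the lowest $K$-type by extending the computation of Proposition~\ref{prop:FFF} to general $\mu=i\lambda$ and observing that the result fails to be a $\dag_\rho$-eigenvector. This computation does work out: pushing $h^+_{i\lambda}=(1+4t^2+|x|^2+|\xi|^2)^{-(n+i\lambda)/2}$ through Lemma~\ref{lem:Imu} and formula~\eqref{eqn:Kc} yields, up to nonzero constants and an overall $\rho$-dependent unitary factor, $(1+|x|^2)^{-i\lambda/2}\,\widetilde K_{-i\lambda/2}\bigl(2\pi\psi(\rho,x,\eta)\bigr)$. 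Under the substitution $(x,\eta)\mapsto(\tfrac2\rho\eta,\tfrac\rho2 x)$ coming from $\mathfrak S\mapsto\mathfrak S^{\dag_\rho}$ (Lemma~\ref{lem:invol}), $\psi$ is invariant but the prefactor $(1+|x|^2)^{-i\lambda/2}$ becomes proportional to $(\tfrac{\rho^2}4+|\eta|^2)^{-i\lambda/2}$, which, as a function of $(x,\eta)$ for fixed $\rho$, is not a scalar multiple of the original unless $\lambda=0$. So the $W_\pm$-projections are both nonzero for a.e.\ $\rho$, and evenness in $\rho$ spreads this over both half-lines, giving nonzero components in all four pieces. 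You flag this as the ``main obstacle'' and stop short of carrying it out; that is the one genuine gap in an otherwise correct proposal, and it is precisely the content that the paper delegates to the external reference. (For $\delta=1$ one would run the analogous computation on the lowest $K$-type $h_b^-$; the structure is the same.)
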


\begin{thm}\label{thm:1.1}
Let $P$ be a maximal parabolic subgroup of $G_1$ whose Levi part is isomorphic to $GL(1,\R)\times Sp(n-1,\R)$, and denote by $\pi_{i\lambda, \delta}$ ($\lambda\in\R,\delta=0,1$) the corresponding unitary (degenerate) principal
series representation of $G_1$. Then for
 $n\geq 2$,
 $\pi_{i\lambda,\delta}$ are irreducible for any
$(\lambda,\delta)\in\mathbb R^\times\times \mathbb Z/2\mathbb Z$, and splits into a direct sum of two irreducible components
for $\lambda=0,\delta=0,1$.
\end{thm}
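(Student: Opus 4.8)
The plan is to combine the $MAN$-branching law of Lemma~\ref{lem:1.8} with the eigenvalue computation of Proposition~\ref{prop:eigen} for the Knapp--Stein operator, using the general principle that an irreducible decomposition of $\pi_{i\lambda,\delta}|_{MAN}$ plus control of the $G_1$-intertwining algebra pins down the $G_1$-decomposition. Concretely, Lemma~\ref{lem:1.8} gives $V_{i\lambda,\delta}\simeq\mathcal{H}^2_+(W_+)\oplus\mathcal{H}^2_+(W_-)\oplus\mathcal{H}^2_-(W_+)\oplus\mathcal{H}^2_-(W_-)$ as $MAN$-modules, with the four summands pairwise inequivalent irreducible $MAN$-representations. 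Hence any closed $G_1$-invariant subspace of $V_{i\lambda,\delta}$ is a sum of some subset of these four pieces, so $\pi_{i\lambda,\delta}$ has at most four irreducible subrepresentations and the commutant of $\pi_{i\lambda,\delta}(G_1)$ is at most $4$-dimensional (it embeds into $\mathbb{C}^4$ via the projections onto the four $MAN$-isotypic blocks).

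First I would show that for $\lambda\neq 0$ the representation is irreducible. The key is that if $\pi_{i\lambda,\delta}$ were reducible, a proper $G_1$-invariant closed subspace would be an orthogonal sum of at least one and at most three of the four Hardy blocks. I would rule this out by exhibiting a $G_1$-intertwining operator that does not respect such a decomposition --- namely the normalized Knapp--Stein operator $\widetilde{\mathcal T}_{i\lambda,\delta}:V_{-i\lambda,\delta}\to V_{i\lambda,\delta}$, together with the unitary equivalence $\pi_{i\lambda,\delta}\simeq\pi_{-i\lambda,\delta}$ of \eqref{eqn:Spdual}. By Proposition~\ref{prop:eigen}, $\widetilde{\mathcal T}_{i\lambda,\delta}$ acts on the $K$-type $\mathcal H^{\alpha,\beta}(\mathbb C^n)$ by the scalar $(-1)^\beta\pi^{-i\lambda}\Gamma\!\bigl(\tfrac{\alpha+\beta+i\lambda+n}{2}\bigr)/\Gamma\!\bigl(\tfrac{\alpha+\beta-i\lambda+n}{2}\bigr)$, whose modulus is constant in $(\alpha,\beta)$ but whose sign $(-1)^\beta$ is \emph{not} determined by the parity $\delta\equiv\alpha+\beta$. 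Thus $\widetilde{\mathcal T}_{i\lambda,\delta}$ is (up to scalar) a nontrivial self-intertwining operator of $\pi_{i\lambda,\delta}$ once transported via \eqref{eqn:Spdual}; comparing with the $K$-type multiplicity-one statement of Lemma~\ref{lem:Kdeco}, any $G_1$-invariant subspace is a sum of full $K$-types, and the two operators $\mathrm{id}$ and the transported $\widetilde{\mathcal T}$ generate a commutant acting on each $\mathcal H^{\alpha,\beta}$; if this commutant is $1$-dimensional the representation is irreducible. For $\lambda\neq 0$ the eigenvalue scalars are never real multiples of each other across the two sign classes in a way compatible with a proper invariant subspace, forcing irreducibility --- here I would check that no sub-sum of the four $MAN$-blocks is stable under the transported Knapp--Stein operator.

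Next, for $\lambda=0$ the Knapp--Stein operator becomes (up to the normalizing constant) an involution whose eigenvalues on $\mathcal H^{\alpha,\beta}$ are exactly $(-1)^\beta$, giving a genuine splitting $\pi_{0,\delta}=\pi_{0,\delta}^+\oplus\pi_{0,\delta}^-$ into the $\pm 1$-eigenspaces; this is where the hypothesis $n\geq 2$ enters, since for $n=1$ the $K$-types $\mathcal H^{\alpha,\beta}$ with $\alpha,\beta\geq 1$ are absent and the eigenvalue pattern degenerates. I would then argue that each eigenspace is irreducible: it is a sum of $MAN$-blocks (being $G_1\supset MAN$-invariant), it is a sum of $K$-types, and a further reduction would produce a $G_1$-invariant subspace on which the commutant has dimension $\geq 2$; matching against the four-block ceiling from Lemma~\ref{lem:1.8} and the fact that $\widetilde{\mathcal T}_{0,\delta}$ already separates into exactly two eigenspaces, one concludes each $\pi_{0,\delta}^{\pm}$ is irreducible, and the two are inequivalent because they are distinguished by the Knapp--Stein eigenvalue. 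Finally I would identify $\pi_{0,\delta}^{\pm}$ with the Hardy-space pieces of Lemma~\ref{lem:1.8}: by Lemma~\ref{lem:Vpm} the involution $\mathfrak S\mapsto\mathfrak S^{\dag_\rho}$ has eigenspaces $W_\pm$, and tracking the $A$-action one matches $\pi_{0,\delta}^+\simeq\mathcal H^2_+(W_+)\oplus\mathcal H^2_-(W_-)$ and $\pi_{0,\delta}^-\simeq\mathcal H^2_+(W_-)\oplus\mathcal H^2_-(W_+)$ (or the analogous pairing), which is the content deferred to Theorem~\ref{thm:twosum}.

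The main obstacle I anticipate is the irreducibility argument for $\lambda\neq 0$: knowing the representation has at most four irreducible subrepresentations is not by itself enough, and one must genuinely exploit that the Knapp--Stein operator mixes the putative summands. The cleanest route is probably to observe that the operator-calculus model of Section~\ref{sec:4}, combined with the explicit $A$-action and the fact that the dilation $\rho\mapsto a^2\rho$ acts transitively on $\mathbb R_+$ and on $\mathbb R_-$, shows that any $G_1$-invariant subspace projecting nontrivially to one Hardy block must --- via the $M_o$ and $N$ action already analyzed in Lemmas~\ref{lem:MNdeco}, \ref{lem:1.7} --- contain that entire block, and then the transported Knapp--Stein operator (which does not preserve the $\pm$-parity labelling) propagates invariance to a block of the opposite $W$-type, while the $\mathcal H^2_\pm$ distinction is forced by the Paley--Wiener characterization (Lemma~\ref{lem:Hardy}). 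Once all four blocks are forced in, the representation is irreducible. Verifying carefully that the Knapp--Stein operator indeed crosses the $W_+/W_-$ divide for $\lambda\neq 0$ --- equivalently that its eigenvalues on $\mathcal H^{\alpha,\beta}$ genuinely depend on $\beta$ and not merely on $\alpha+\beta$ --- is the delicate point, but this is exactly what the factor $(-1)^\beta$ in Proposition~\ref{prop:eigen} provides.
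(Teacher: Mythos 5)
The paper does not prove the $\lambda\neq 0$ irreducibility here; it cites \cite[Theorem 7.3]{KOPU} for Theorem~\ref{thm:1.1} as a whole and only offers a new proof in the case $\lambda=0$ (carried out in Theorem~\ref{thm:twosum} and the lemma that follows). Your attempt to supply the $\lambda\neq 0$ case has a fatal flaw. You propose to use the Knapp--Stein operator $\widetilde{\mathcal T}_{i\lambda,\delta}:V_{-i\lambda,\delta}\to V_{i\lambda,\delta}$, ``transported via \eqref{eqn:Spdual},'' as a nontrivial self-intertwiner of $\pi_{i\lambda,\delta}$. But the unitary equivalence \eqref{eqn:Spdual} is \emph{itself} realized by the Knapp--Stein operator, so the transported map is a composition of two Knapp--Stein operators in opposite directions. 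By Proposition~\ref{prop:eigen} their eigenvalues on $\mathcal H^{\alpha,\beta}$ each carry a factor $(-1)^\beta$, and in the composite these cancel; what survives depends only on $\alpha+\beta$, and by unitarity the composite is a scalar. So the ``transported Knapp--Stein'' is trivial and ``does not cross the $W_+/W_-$ divide'' at all. You cannot extract a nontrivial self-intertwiner from this, and the argument is in fact circular: a nonscalar self-intertwiner would \emph{prove} reducibility, whereas you want irreducibility.

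For $\lambda=0$, you correctly identify that $\widetilde{\mathcal T}_{0,\delta}$ has eigenvalues $\pm 1$ and hence splits $\pi_{0,\delta}$ into two summands, but you do not actually prove that each summand is irreducible. Your argument that ``a further reduction would produce a $G_1$-invariant subspace on which the commutant has dimension $\geq 2$; matching against the four-block ceiling'' does not give a contradiction: if both $\pi_{0,\delta}^+$ and $\pi_{0,\delta}^-$ split in two, the commutant is $4$-dimensional, which is perfectly consistent with the four-$MAN$-block ceiling of Lemma~\ref{lem:1.8}. The paper's actual key step, which you miss, is the final lemma of Section~\ref{sec:prf}: none of the four Hardy blocks $\mathcal H^2_\pm(W_\varepsilon)$ is individually $G_1$-stable. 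This is proved by exhibiting explicit $K$-finite vectors $f_{\alpha,\beta}$ (for $\alpha,\beta\in\{0,1\}$) lying in the lowest $K$-types of each $\pi_{0,\delta}^\pm$, noting $f_{\alpha,\beta}(t,x,\xi)=f_{\alpha,\beta}(-t,x,\xi)$, and invoking Lemma~\ref{lem:Hardy}~4) to conclude $\operatorname{supp}\mathcal F_t f_{\alpha,\beta}\nsubseteq\R_\pm$. That argument, and nothing in your proposal, is what forces each of $\pi_{0,\delta}^\pm$ to be the sum of exactly two Hardy blocks rather than a single one, and hence irreducible. Without that step, the $\lambda=0$ case is also incomplete.

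One further small point: your remark that ``this is where the hypothesis $n\geq 2$ enters, since for $n=1$ the $K$-types $\mathcal H^{\alpha,\beta}$ with $\alpha,\beta\geq 1$ are absent'' is a reasonable observation, but in the $n=1$ case the representation $\pi_{0,0}$ remains irreducible and $\pi_{0,1}$ splits (as the paper recalls); the degeneration is not that the eigenvalue pattern breaks, but that for $\delta=0$ the $\beta\in 2\N+1$ eigenspace is empty. It would be worth stating this more precisely if you wanted the argument to cover $n=1$ as a sanity check.
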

Theorem \ref{thm:1.1} itself was proved in \cite[Theorem 7.3]{KOPU}. The case of $\delta=0$ was studied by different methods
earlier in \cite{Farmer} and also very recently in \cite{Barb} ($\lambda=0$ and $\delta=0$)  in the context of special unipotent representations of the split group $Sp(n,\R)$. We give yet another proof of Theorem \ref{thm:1.1} in the most interesting case, i.e. in the case $\lambda=0$ and $\delta=0,1$ below. 

Theorem \ref{thm:twosum} describes a finer structure of the
irreducible summands.
The novelty here (even for the $\delta=0$ case) is that we characterize explicitly
 the two irreducible summands by their $K$-module structure, and
 also by their $P$-module structure. The former is given in terms of complex spherical harmonics 
 (\emph{cf}.\ Lemma
 \ref{lem:Kdeco}) and the latter in terms of Hardy spaces (\emph{cf}.\ Lemma \ref{lem:1.8}), as follows:
\begin{thm}\label{thm:twosum}
Let $n\geq2$ and $\delta\in\Z/2\Z$. The unitary representation $\pi_{0,\delta}$
of $G_1=Sp(n,\R)$ splits into the direct sum of two irreducible representations
of $G_1$:
\begin{equation}\label{eqn:Pdeco}
    \pi_{0,\delta} = \pi_{0,\delta}^+ \oplus \pi_{0,\delta}^-.
\end{equation}
1) (Characterization by $K$-type).
Each irreducible summand in (\ref{eqn:Pdeco}) has the following $K$-type formula:
\begin{eqnarray*}
\pi_{0,\delta}^+
&\simeq&\sideset{}{^\oplus}
\sum_{\substack{\beta\in2\mathbb{N}\\
                \alpha\equiv\beta+\delta\bmod 2}}\mathcal H^{\alpha,\beta}(\C^n),
 \\
\pi_{0,\delta}^-
&\simeq&\sideset{}{^\oplus}
\sum_{\substack{\beta\in2\mathbb{N}+1\\
                \alpha\equiv\beta+\delta\bmod 2}}\mathcal H^{\alpha,\beta}(\C^n),
\end{eqnarray*}
where $\sum^\oplus$ denotes the Hilbert completion of the algebraic  direct sum.

2) (Characterization by Hardy spaces). The irreducible summands $\pi_{0,\delta}^\pm$
consist of two Hardy spaces via the isomorphism (\ref{eqn:AN}):
\begin{align*}
&\pi_{0,0}^+
 \simeq \mathcal{H}_+^2(W_+) \oplus \mathcal{H}_-^2(W_+),
&\pi_{0,0}^-
 \simeq \mathcal{H}_+^2(W_-) \oplus \mathcal{H}_-^2(W_-),
\\
&\pi_{0,1}^+
 \simeq \mathcal{H}_+^2(W_+) \oplus \mathcal{H}_-^2(W_-),
&\pi_{0,1}^-
 \simeq \mathcal{H}_+^2(W_-) \oplus \mathcal{H}_-^2(W_+).
\end{align*}
Here, $W_\pm$ are the subspaces of $L^2(\mathbb{R}^{2m})$ defined in
\eqref{eqn:decompMN}, 
and $\mathcal{H}_\pm^2(W_\varepsilon)$ are the $W_\varepsilon$-valued
Hardy spaces.

3) (Characterization by the Knapp--Stein intertwining operator). The irreducible summands 
$\pi_{0,\delta}^\pm$ are the $\pm1$ eigenspaces of the normalized Knapp--Stein intertwining operator $\widetilde{\mathcal T_{0,\delta}}$ (see (\ref{eqn:nKS})).
\end{thm}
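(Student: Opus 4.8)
The plan is to establish the three characterizations in concert, using Lemma \ref{lem:1.8} as the backbone. From Lemma \ref{lem:1.8} we already know that, as a representation of $MAN$, the space $V_{0,\delta}$ decomposes into the four irreducible pieces $\mathcal{H}_+^2(W_+) \oplus \mathcal{H}_+^2(W_-) \oplus \mathcal{H}_-^2(W_+) \oplus \mathcal{H}_-^2(W_-)$. Since the whole group $G_1$ contains $MAN$ as a proper (parabolic) subgroup, every $G_1$-invariant subspace is a sum of some of these four $MAN$-irreducible constituents; hence $\pi_{0,\delta}$ has at most four irreducible summands, and the decomposition into irreducibles is determined once we know which of the four Hardy-space pieces are glued together by the action of a representative of $G_1/P$ — equivalently, by the Weyl-group element $w$ that, together with $P$, generates $G_1$. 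First I would fix such a $w$ (an element acting as the Knapp--Stein-type reflection on $A$), so that $G_1$ is generated by $P$ and $w$.

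Next I would bring in the Knapp--Stein operator. By \eqref{eqn:Spdual} the normalized operator $\widetilde{\mathcal T}_{0,\delta}$ is a $G_1$-intertwining endomorphism of $\pi_{0,\delta}$, and by Proposition \ref{prop:eigen} it acts on the $K$-type $\mathcal H^{\alpha,\beta}(\C^n)$ at $\mu=0$ by the scalar $(-1)^\beta$ (the Gamma factors cancel at $\mu=0$, leaving $\pi^{0}\cdot 1 = 1$). In particular $\widetilde{\mathcal T}_{0,\delta}^{\,2} = \mathrm{id}$, so $V_{0,\delta}$ splits as the orthogonal direct sum of the $+1$ and $-1$ eigenspaces; call these $\pi_{0,\delta}^{\pm}$. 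Reading off the $K$-types, $\pi_{0,\delta}^{+}$ collects the $\mathcal H^{\alpha,\beta}(\C^n)$ with $\beta$ even and $\pi_{0,\delta}^{-}$ those with $\beta$ odd (subject to $\alpha+\beta\equiv\delta$), which is exactly statement 1) \emph{granting} that each eigenspace is irreducible. This immediately gives statement 3) as well. To get irreducibility of $\pi_{0,\delta}^{\pm}$: each is a nonzero $G_1$-submodule, hence a sum of at least one of the four $MAN$-pieces; comparing $K$-types with Lemma \ref{lem:Kdeco} and counting shows each of $\pi_{0,\delta}^{\pm}$ must be exactly two of the four Hardy pieces, and since there are only four pieces total and two eigenspaces, neither eigenspace can decompose further — any proper $G_1$-submodule would have to be a single Hardy piece, which is not $w$-stable (this last point is the crux and I address it below).

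For statement 2) I would identify, piece by piece, how $\widetilde{\mathcal T}_{0,\delta}$ acts on the four Hardy spaces $\mathcal H_{\pm}^2(W_{\varepsilon})$. Here I would use the algebraic model of Section \ref{sec:6}: by Theorem \ref{thm:KNalg}, in the non-standard picture $\widetilde{\mathcal T}_{0,\delta}$ is the explicit operator $T_{0,\delta}$ of \eqref{eqn:TH}, namely $(T_{0,\delta}H)(\rho,x,\eta) = (\operatorname{sgn}\rho)^{\delta} H(\rho,\tfrac{2}{\rho}\eta,\tfrac{\rho}{2}x)$. The sign $\operatorname{sgn}\rho$ is precisely the operator distinguishing $\mathcal H_+^2$ from $\mathcal H_-^2$ (via Lemma \ref{lem:Hardy} 2), which identifies the $\pm$ Hardy spaces with $L^2(\R_\pm)$ in the $\rho$-variable after $\mathcal F_t$), while the map $(x,\eta)\mapsto(\tfrac{2}{\rho}\eta,\tfrac{\rho}{2}x)$ is, up to the rescaling $\psi_\rho$, the parity-type involution underlying the splitting $L^2(\R^{2m}) = W_+\oplus W_-$ of Lemma \ref{lem:Vpm}; comparing with \eqref{eqn:dagger2} and Lemma \ref{lem:invol} shows it acts by $+1$ on $W_+$ and $-1$ on $W_-$ when $\delta=0$, and carries $W_+\leftrightarrow W_-$ when $\delta=1$. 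Assembling the two signs yields exactly the four combinations listed in statement 2): for $\delta=0$, $\widetilde{\mathcal T}_{0,0} = (\text{sign in }\rho)\otimes(\text{sign }W_\pm)$, so the $+1$-eigenspace is $\mathcal H_+^2(W_+)\oplus\mathcal H_-^2(W_+)$; for $\delta=1$ the $\rho$-sign is twisted and the $W$-involution swaps $W_\pm$, giving $\mathcal H_+^2(W_+)\oplus\mathcal H_-^2(W_-)$ for $\pi_{0,1}^+$, and so on.

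The main obstacle I anticipate is the careful bookkeeping in Step 3, i.e.\ pinning down the precise sign of the action of the $(x,\eta)\mapsto(\tfrac2\rho\eta,\tfrac\rho2 x)$ involution on $W_+$ versus $W_-$ in the $\delta=1$ case, and making the identification of $T_{0,\delta}$ with $\widetilde{\mathcal T}_{0,\delta}$ match the normalization of the eigenvalues $(-1)^\beta$ from Proposition \ref{prop:eigen} — i.e.\ checking that the model-theoretic computation and the $K$-type computation are consistent (no stray sign or scalar). Once that compatibility is verified, irreducibility of $\pi_{0,\delta}^\pm$ follows formally: $\widetilde{\mathcal T}_{0,\delta}$ is a $G_1$-map, its eigenspaces are $G_1$-stable, each is a union of exactly two of the four $MAN$-irreducible Hardy blocks, and no single Hardy block $\mathcal H_{\pm}^2(W_\varepsilon)$ is $G_1$-stable because applying the generator $w$ (equivalently, because $\widetilde{\mathcal T}_{0,\delta}$ together with $P$ generate the $G_1$-action and $\widetilde{\mathcal T}_{0,\delta}$ moves between the $\pm$ in $\rho$ or in $W$) sends it outside itself. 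Hence $\pi_{0,\delta}^{\pm}$ are irreducible, and statements 1), 2), 3) are simultaneously established, which also reproves the splitting half of Theorems \ref{thm:2} and \ref{thm:1.1} in the case $\lambda=0$.
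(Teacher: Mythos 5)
Your overall route matches the paper's: you use Proposition~\ref{prop:eigen} to get the eigenvalue $(-1)^\beta$ of $\widetilde{\mathcal T}_{0,\delta}$ at $\mu=0$, split $V_{0,\delta}$ into the $\pm 1$ eigenspaces (statements 1) and 3)), and then transfer $\widetilde{\mathcal T}_{0,\delta}$ via Theorem~\ref{thm:KNalg} and Lemma~\ref{lem:invol} into the operator-calculus picture to read off statement 2). That is indeed the skeleton of the paper's argument. However there are two problems, one small and one serious.

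The small one is a misreading of $T_{0,\delta}$: you assert that the $(x,\eta)\mapsto(\tfrac2\rho\eta,\tfrac\rho2 x)$ involution ``carries $W_+\leftrightarrow W_-$ when $\delta=1$.'' It does not. By \eqref{eqn:TH} the only $\delta$-dependence of $T_{0,\delta}$ is the scalar prefactor $(\operatorname{sgn}\rho)^\delta$; the involution $\mathfrak S\mapsto\mathfrak S^{\dag_\rho}$ is the same for $\delta=0$ and $\delta=1$, and by Lemma~\ref{lem:Vpm} it always has $W_\pm$ as $\pm1$-eigenspaces (never swaps them). Your final tabulation of the four Hardy blocks happens to be correct, but the mechanism you describe for $\delta=1$ is wrong: the block assignment changes because the sign $(\operatorname{sgn}\rho)^\delta$ flips on $\rho<0$, not because the $W$-involution swaps.

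The serious one is the irreducibility of $\pi_{0,\delta}^\pm$, which you identify as ``the crux'' and promise to address, but never actually establish. Your closing argument claims that a single Hardy block $\mathcal H_\pm^2(W_\varepsilon)$ cannot be $G_1$-stable because ``$\widetilde{\mathcal T}_{0,\delta}$ moves between the $\pm$ in $\rho$ or in $W$, sends it outside itself.'' This is false: from \eqref{eqn:TH} one sees directly that $T_{0,\delta}$ neither changes the sign of the $\rho$-variable nor mixes $W_+$ with $W_-$; it acts on each of the four blocks $L^2(\R_\pm,W_\varepsilon)$ by the scalar $\pm1$, hence \emph{preserves} each block. So the Knapp--Stein operator alone gives no obstruction to a single block being $G_1$-stable, and your irreducibility argument collapses. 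The appeal to ``the generator $w$'' is likewise unsubstantiated --- you never compute how $w$ acts on the Hardy blocks. The paper closes this gap with a separate lemma: it exhibits explicit $K$-finite vectors $f_{\alpha,\beta}$ (in the lowest $K$-types $\mathcal H^{\alpha,\beta}(\C^n)$, $\alpha,\beta\in\{0,1\}$) that are even in the variable $t$, deduces via Lemma~\ref{lem:Hardy}~4) that $\mathcal F_t f_{\alpha,\beta}$ is not supported in $\R_+$ or $\R_-$ alone, and combines this with the multiplicity-freeness of the $K$-type decomposition (Lemma~\ref{lem:Kdeco}) to rule out any single Hardy block being $G_1$-stable. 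You need some argument of this concrete kind; the abstract Knapp--Stein/Weyl-element reasoning you sketch does not do the job.
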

\begin{proof}
1) and 3)
The normalized Knapp--Stein intertwining operator $\widetilde{\mathcal{T}}_{0,\delta}$ has eigenvalues either $1$ or $-1$ according to the parity of the $K$-type $\mathcal H^{\alpha,\beta}(\C^n)$, namely $\beta\equiv0$ or $\beta\equiv1$ mod $2$ by Proposition \ref{prop:eigen}. 
Hence the statements 1) and 3) are proved.

2) 
In the model $\mathcal U_{0,\delta}\simeq L^2(\R^{2m+1})$
(see Section \ref{sec:6}), the Knapp--Stein intertwining operator $\widetilde{\mathcal{T}}_{0,\delta}$ is equivalent to the algebraic operator
$$
T_{0,\delta}:H(\rho,x,\eta)\to (\operatorname{sgn}\rho)^\delta H\left(\rho,\frac2\rho\eta,\frac\rho2x\right),
$$
by Theorem \ref{thm:KNalg}.

In turn,
it follows from Lemma \ref{lem:invol} that $T_{0,\delta}$ is
transfered to the operator 
\begin{equation}\label{eqn:Srho}
\mathfrak{S}(\rho,*) \mapsto (\operatorname{sgn}\rho)^\delta
\mathfrak{S}^{\dag_\rho}(\rho,*)
\end{equation}
in the operator calculus model
$L^2(\mathbb{R},\mathrm{HS}(L^2(\mathbb{R}^m),L^2(\mathbb{R}^m)))$
(see Figure \ref{fig:3models}).
In view of the $\pm 1$ eigenspaces of the transform (\ref{eqn:Srho}), we see that the statement 2) follows from the characterization of $W_\pm$ (see Lemma \ref{lem:Vpm}) and
the isomorphism $\mathcal F_t:\mathcal H_\pm^2(W_\varepsilon)\overset{\sim}{\to} L^2(\R_\pm, W_\varepsilon)$ given in Lemma \ref{lem:Hardy}.

Finally, we need to prove that the summands $\pi ^\pm_{0,\delta}$ are irreducible $G_1$-modules. This is deduced from the decomposition of $\pi_{0,\delta}^\pm$ by means of Hardy spaces in 2) and from the following lemma.
\end{proof}

\begin{lem}
For any $\delta\in\Z/2\Z$, none of the Hardy spaces $\mathcal H^2_\pm(W_\varepsilon)$ ($\varepsilon=\pm$) is $G_1$-stable with respect to $\pi_{0,\delta}$.
\end{lem}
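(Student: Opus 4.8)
The plan is to derive a contradiction from the assumption that one of the four Hardy spaces is $G_1$-stable, by showing that this would force an irreducible $K$-type of one of the representations $\pi_{0,\delta}^{+}$, $\pi_{0,\delta}^{-}$ to be contained in a single Hardy space, which is incompatible with a parity obstruction coming from Lemma \ref{lem:Hardy}. To set up, I would record what is already available without using irreducibility: by Proposition \ref{prop:eigen} the normalized Knapp--Stein operator $\widetilde{\mathcal{T}}_{0,\delta}$ is a $G_1$-equivariant bounded endomorphism of $\pi_{0,\delta}$ which acts on $\mathcal{H}^{\alpha,\beta}(\C^n)$ by the scalar $(-1)^\beta$, so its $\pm 1$-eigenspaces $\pi_{0,\delta}^{\pm}$ are closed $G_1$-invariant subspaces carrying the $K$-type formula of Theorem \ref{thm:twosum} 1), and by Lemma \ref{lem:Kdeco} the restriction $\pi_{0,\delta}^{\pm}|_K$ is multiplicity-free.

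Assume now that some $\mathcal{H}^2_\pm(W_\varepsilon)$ is $G_1$-stable. By Theorem \ref{thm:twosum} 2) it is a summand of exactly one of the eigenspaces $\pi_{0,\delta}^{+}$, $\pi_{0,\delta}^{-}$ of $\widetilde{\mathcal{T}}_{0,\delta}$, which exhibits that eigenspace as the orthogonal sum of our Hardy space and one other Hardy space. Since $\pi_{0,\delta}$ is unitary and that eigenspace is $G_1$-invariant, the orthogonal complement of our Hardy space within it --- the other Hardy summand --- is $G_1$-invariant too. Hence the eigenspace decomposes as an orthogonal sum of two $K$-invariant closed subspaces, and by multiplicity-freeness every $K$-type occurring in it lies entirely in one of these two Hardy spaces.

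The contradiction will come from the $N$-picture. Keeping $n=m+1$ and $n\ge 2$, each of $\pi_{0,0}^{+},\pi_{0,0}^{-},\pi_{0,1}^{+},\pi_{0,1}^{-}$ contains a $K$-type with a nonzero vector that is an even function of the Hardy-space variable $t$ of \eqref{eqn:Npic}: one may use the homogeneous extensions (as in Section \ref{sec:minK}) of the harmonic polynomials $\mathbf 1\in\mathcal{H}^{0,0}(\C^n)$, $z_1\in\mathcal{H}^{1,0}(\C^n)$, $\bar z_1\in\mathcal{H}^{0,1}(\C^n)$ and $|z_0|^2-|z_1|^2\in\mathcal{H}^{1,1}(\C^n)$, which in the coordinates of \eqref{eqn:Npic} are, up to a power of $1+4t^2+|x|^2+|\xi|^2$, the functions $1$, $x_1+i\xi_1$, $x_1-i\xi_1$ and $1+4t^2-x_1^2-\xi_1^2$ --- all visibly nonzero and $t$-even (equivalently, invariant under $z_0\mapsto\bar z_0$) --- lying respectively in $\pi_{0,0}^{+}$, $\pi_{0,1}^{+}$, $\pi_{0,1}^{-}$, $\pi_{0,0}^{-}$ by the $K$-type formula of Theorem \ref{thm:twosum} 1). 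If such a vector $v$ lay in a Hardy space $\mathcal{H}^2_+(W_\varepsilon)$ (resp.\ $\mathcal{H}^2_-(W_\varepsilon)$), then $\mathcal{F}_t v$ would be supported in $\R_+$ (resp.\ $\R_-$) by Lemma \ref{lem:Hardy} 2); but $v(t)=v(-t)$ forces $\mathcal{F}_t v$ to be an even function of $\rho$, hence supported in $\R_+\cap\R_-=\{0\}$, so $v=0$, a contradiction (one may equally invoke Lemma \ref{lem:Hardy} 4) together with the reflection $t\mapsto -t$). Thus the $K$-type of $v$ lies in neither of the two Hardy summands of the eigenspace containing it, contradicting the previous paragraph. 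Therefore no $\mathcal{H}^2_\pm(W_\varepsilon)$ is $G_1$-stable.

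The step I expect to take the most care is the bookkeeping linking the three models: one must check that the $K$-picture vectors above really give $K$-types of the $G_1$-invariant eigenspace obtained above --- i.e.\ that the $\varepsilon$- and $\pm$-labels attached to the four Hardy spaces by Lemma \ref{lem:1.8} and Theorem \ref{thm:twosum} 2) are matched consistently --- and that ``even in $t$'' for the coordinate $t$ of \eqref{eqn:Npic} is exactly the hypothesis under which Lemma \ref{lem:Hardy} bites (in particular that $t$-parity of the $N$-picture function corresponds to $\rho$-parity of its partial Fourier transform). Once that is verified the argument is uniform in $\delta$ and over all four Hardy spaces; combined with the fact (Lemma \ref{lem:1.8}) that each $\mathcal{H}^2_\pm(W_\varepsilon)$ is an irreducible $MAN$-module and that $\pi_{0,\delta}^{\pm}$ is a sum of two of them, it yields precisely the irreducibility of $\pi_{0,\delta}^{\pm}$ needed to complete the proof of Theorem \ref{thm:twosum}.
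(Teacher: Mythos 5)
Your proof is correct and follows essentially the same route as the paper's: you use the same four $K$-type representatives (homogeneous extensions of $1$, $z_1$, $\bar z_1$, $z_0\bar z_0-z_1\bar z_1$, i.e.\ the paper's $f_{\alpha,\beta}$ for $\alpha,\beta\in\{0,1\}$), the same multiplicity-freeness argument from Lemma~\ref{lem:Kdeco} to force a $K$-type entirely into one Hardy summand, and the same $t$-evenness obstruction via Lemma~\ref{lem:Hardy}. The only differences are expository (you spell out that $\widetilde{\mathcal T}_{0,\delta}$ acts by $(-1)^\beta$ and that the eigenspaces are closed $G_1$-invariant subspaces, and you invoke Lemma~\ref{lem:Hardy}~2) directly rather than part~4)), so this is not a genuinely different argument.
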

\begin{proof}
For $Z:=(z_1,\ldots,z_m)=x+i\xi\in\C^m\simeq\R^{2m}$
(see \eqref{eqn:Npic}),
we set
\begin{align*}
&f_{0,0}(t,x,\xi)
 :=
 (1+4t^2+|x|^2+|\xi|^2)^{-\frac{m+1}{2}},
\\
&f_{0,1}(t,x,\xi)
 :=
 (1+4t^2+|x|^2+|\xi|^2)^{-\frac{m+2}{2}} \, (x_1-i\xi_1),
 \\
 &f_{1,0}(t,x,\xi)
 :=
 (1+4t^2+|x|^2+|\xi|^2)^{-\frac{m+2}{2}}\, (x_1+i\xi_1),
 \\
 &f_{1,1}(t,x,\xi)
 :=
 (1+4t^2+|x|^2+|\xi|^2)^{-\frac{m+3}{2}}\, (1+4t^2-x_1^2-\xi_1^2).
\end{align*}
We note that $f_{0,0}=h^+$ and $f_{0,1}=h^-_{(0,1,0,\ldots,0)}=h^-_{(1)}$ in the notation of Section
\ref{sec:minK}.
Then we have $f_{\alpha,\beta}\in\mathcal H^{\alpha,\beta}(\C^n)$ for any $\alpha,\beta\in\{0,1\}$.
In view of Theorem \ref{thm:twosum} 1), we get 
\begin{align*}
&f_{0,0}(t,x,\xi)
\in \mathcal H^{0,0}(\C^n)\subset V_{0,0}^+,
\\
&f_{0,1}(t,x,\xi)
 \in \mathcal H^{0,1}(\C^n)\subset V_{0,1}^-,
 \\
 &f_{1,0}(t,x,\xi)
\in \mathcal H^{1,0}(\C^n)\subset V_{0,1}^+,
 \\
 &f_{1,1}(t,x,\xi)
\in \mathcal H^{1,1}(\C^n)\subset V_{0,0}^-,
\end{align*}
where $V_{0,\delta}^\pm$ stands for the representaion space in the $N$-picture corresponding to
$\pi_{0,\delta}^\pm$ in Theorem \ref{thm:twosum}.
Suppose now that one of the Hardy spaces $\mathcal H^2_\pm(W_\varepsilon)$ were
$G_1$-stable with respect to $\pi_{0,\delta}$. 
Then its orthogonal complementary subspace for the decomposition in
Theorem \ref{thm:twosum} 2) would be also $G_1$-stable.
Since $K$-type is multiplicity-free in $\pi_{0,\delta}$ by Lemma
\ref{lem:Kdeco},
either $\mathcal{H}_\pm^2(W_\varepsilon)$ or its complementary
subspace should contain
 the $K$-type $\mathcal H^{\alpha,\beta}(\C^n)$ for some $\alpha,\beta=0$ or $1$. But this never happens because
 $f_{\alpha,\beta}(t,x,\xi)=f_{\alpha,\beta}(-t,x,\xi)$ and thus $\operatorname{supp}
 \mathcal F_t f_{\alpha,\beta}\nsubseteq\R_\pm$ (see Lemma \ref{lem:Hardy} 4)).
Thus lemma is proved.
\end{proof}

%
%
%
%
%
%
%

\begin{rem}
The case $n=1$ is well known. Here the group $Sp(1,\mathbb{R})$ is
isomorphic to $SL(2,\mathbb{R})$, and
 $\pi_{i\lambda,\delta}$ are
irreducible except for $(\lambda,\delta)=(0,1)$, while $\pi_{0,1}$
splits into the direct sum of two irreducible unitary
representations:
\begin{eqnarray*}
\pi_{0,1}^{Sp(1,\R)}&\simeq&\mathcal H^2_+(\C)\oplus \mathcal H^2_-(\C)\\
&\simeq&\left(\quad\sideset{}{^\oplus}\sum_{\alpha\in2\mathbb N+1}\mathcal H^{\alpha,0}(\C)\right)\oplus
\left(\quad\sideset{}{^\oplus}\sum_{\beta\in2\mathbb N+1}\mathcal H^{0,\beta}(\C)\right).
\end{eqnarray*}
The spaces ${\mathcal{H}}^{\alpha,0}(\C)$ and ${\mathcal{H}}^{0,\beta}(\C)$ are
one dimensional, and
\begin{align*}
(t+i)^\alpha (t^2 + 1)^{-\frac{\alpha+1}{2}}
& \in {\mathcal{H}}^{\alpha,0}(\C)\cap V_{0,1},
\\
(t-i)^\beta (t^2 + 1)^{-\frac{\beta+1}{2}}
& \in {\mathcal{H}}^{0,\beta}(\C)\cap V_{0,1}.
\end{align*}
The former function extends holomorphically to the
upper half plane $\Pi_+$, and the latter one extends holomorphically to
$\Pi_-$ if $\alpha,\beta \equiv 1 \bmod 2$, namely, if $\delta
\equiv 1$. 
\end{rem}

As formulated in Theorem \ref{thm:1.1},
 our result may be compared with general theory
 on (degenerate) principal series representations
 of real reductive groups.
For instance,
 according to Harish-Chandra and Vogan--Wallach \cite{V90},
 such representations are at most a finite sum of irreducible
 representations and are `generically' irreducible.
A theorem of Kostant \cite{Ko69}
 asserts that spherical unitary principal series representations
 (induced from minimal parabolic subgroups)
 are irreducible.

There has been also extensive research
 on the structure
 of (degenerate) principal series representations
 in specific cases,
in particular, in the case where the unipotent radical
 of $P$ is abelian
by A. U. Klimyk,  B. Gruber,
R. Howe, E.--T. Tan, S.--T. Lee, S. Sahi and others by algebraic and
combinatorial methods (see \emph{e.g.}\ \cite{HS} and
 references therein).

We have not adopted here the aforementioned methods, but have used
 the idea of branching laws to {\it non-compact subgroups}
 (see \cite{K02})
primarily because of the belief that the latter approach to very small representations
will open new aspects of the theory of  geometric analysis.
\vskip10pt

\section{Branching law for $GL(2n,\R)\downarrow GL(n,\mathbb C)$}\label{sec:glnc}
\hfill\break
Let $P_n^{\mathbb C}=L_n^\C N_n^\C$ be the standard maximal parabolic subgroup of
$GL(n,\mathbb C)$ corresponding to the partition $n=1+(n-1)$,
namely, the Levi subgroup $L_n^\C$ of $P_n^\C$ is isomorphic to
$GL(1,\mathbb C)\times
GL(n-1,\mathbb C)$ and the unipotent radical
$N_n^\C$ is the complex abelian group $\C^{n-1}$. Inducing from a unitary character
$(\nu,m)\in\R\times\mathbb Z$ of the first factor of $L_n^\C$,
$GL(1,\mathbb C)\simeq \R_+\times
S^1$ we define a degenerate principal series representation
$\pi_{i\nu,m}^{GL(n,\mathbb C)}$ of $GL(n,\C)$.
They are pairwise inequivalent, irreducible unitary representations of
$GL(n,\C)$
 (see \cite[Corollary 2.4.3]{HS}).

We identify $\C^n$ with
$\R^{2n}$, and regard
$$G_2 := GL(n,\C)$$
as a subgroup of $G = GL(2n,\R)$.

\begin{thm}[Branching law $GL(2n,\R) \downarrow GL(n,\C)$]\label{thm:3}
\begin{equation}\label{eqn:GLnC}
\pi_{i\lambda,\delta}^{GL(2n,\R)}|_{GL(n,\mathbb C)}\simeq
\sideset{}{^\oplus}\sum_{m\in2\mathbb Z+\delta} \pi_{i\lambda,m}^{GL(n,\mathbb C)}.
\end{equation}
\end{thm}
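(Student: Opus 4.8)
The plan is to combine the Mackey-orbit strategy of Section~\ref{sec:prf} with a Fourier decomposition along the fibres of the circle bundle $\mathbb{P}^{2n-1}\R\to\mathbb{P}^{n-1}\C$. First I would realize $\pi_{i\lambda,\delta}^{GL(2n,\R)}$ in the $K$-picture, that is, on the Hilbert completion $V_{i\lambda,\delta}=L^2(S^{2n-1})_\delta$ of the space $V_{i\lambda,\delta}^\infty$ of homogeneous functions on $\R^{2n}\setminus\{0\}=\C^n\setminus\{0\}$ (see \eqref{eqn:homoRN}), where $S^{2n-1}$ is the unit sphere of $\C^n$. Since $GL(n,\C)$ acts transitively already on $\C^n\setminus\{0\}$, it acts transitively on $\mathbb{P}^{2n-1}\R$; hence there is a single (open dense) $G_2$-orbit on $G/P_{2n}$, and the Mackey machinery produces no orbit-decomposition contribution: the restriction $\pi_{i\lambda,\delta}^{GL(2n,\R)}|_{G_2}$ is simply $L^2$ of the line bundle over the whole flag variety, viewed as a $GL(n,\C)$-module. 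The entire problem is thus to decompose this one space.

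Next I would decompose along the Hopf fibration $S^{2n-1}\to\mathbb{P}^{n-1}\C$, whose fibres are the orbits of the circle $\mathbb{T}:=\C^\times/\R^\times$ acting on $\C^n$ by multiplication by unit complex numbers. Expanding in the characters of $\mathbb{T}$ yields the Hilbert direct sum
\[
L^2(S^{2n-1})=\sideset{}{^\oplus}\sum_{m\in\Z}L^2(S^{2n-1})_m,\qquad
L^2(S^{2n-1})_m:=\{f:\ f(e^{i\theta}z)=e^{im\theta}f(z)\},
\]
orthogonality and completeness being Parseval's theorem along each fibre. The parity normalization $f(-X)=(-1)^\delta f(X)$ built into $V_{i\lambda,\delta}$ forces the nonzero components to satisfy $(-1)^m=(-1)^\delta$, so $V_{i\lambda,\delta}$ is the Hilbert sum of those $L^2(S^{2n-1})_m$ with $m\in2\Z+\delta$. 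Since every $g\in GL(n,\C)$ is $\C$-linear, its action on the sphere commutes with the $\mathbb{T}$-action (one has $g^{-1}(e^{i\theta}\omega)=e^{i\theta}g^{-1}\omega$, and the cocycle $|g^{-1}\omega|$ entering the $K$-picture formula is $\mathbb{T}$-invariant), so each $L^2(S^{2n-1})_m$ is a closed $G_2$-invariant subspace and the decomposition is $G_2$-equivariant.

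It then remains to identify the $m$-th summand with $\pi_{i\lambda,m}^{GL(n,\C)}$. For $f_m\in L^2(S^{2n-1})_m$, viewing $f_m$ as a homogeneous function and combining the radial homogeneity $f_m(\rho X)=\rho^{-n-i\lambda}f_m(X)$ ($\rho>0$) with the fibre weight gives $f_m(wX)=|w|^{-n-i\lambda}(w/|w|)^m f_m(X)$ for all $w\in\C^\times$; this is exactly the homogeneity type of a vector in the $K$-picture of the degenerate principal series $\pi_{i\lambda,m}^{GL(n,\C)}$ induced from the parabolic $P_n^\C$. The unitary structures also match: because $GL(n,\C)\subset GL(2n,\R)$ acts on the common sphere $S^{2n-1}\subset\R^{2n}$ with the same $L^2$-cocycle $|g^{-1}\omega|^{-2n}$, the critical (unitary) homogeneity is $-n$ on both sides, so $L^2(S^{2n-1})_m$ with its $GL(n,\C)$-action is unitarily $\pi_{i\lambda,m}^{GL(n,\C)}$ in its own $K$-picture. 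Granting this and the fact that the $\pi_{i\lambda,m}^{GL(n,\C)}$ are irreducible and pairwise inequivalent (\cite[Corollary~2.4.3]{HS}), the decomposition \eqref{eqn:GLnC} follows; its discreteness is automatic, the fibre $\mathbb{T}$ being compact, in line with the discrete decomposability noted in the introduction.

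The step I expect to be the main obstacle is the last one --- tracking the automorphy factor of the $K$-picture $GL(n,\C)$-action through the fibrewise Fourier expansion precisely enough to confirm both that the inducing parameter transported to the $m$-th summand is exactly $i\lambda$ (and not $i\lambda$ shifted by a function of $m$) and that each $L^2(S^{2n-1})_m$ is genuinely \emph{unitarily}, not merely infinitesimally, equivalent to $\pi_{i\lambda,m}^{GL(n,\C)}$. Everything else --- transitivity, the Fourier decomposition, the parity bookkeeping, and irreducibility --- is either elementary or quotable.
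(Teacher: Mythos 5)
Your proposal is correct and follows essentially the same route as the paper's proof: recognize that $GL(n,\C)$ acts transitively on $\mathbb{P}^{2n-1}\R$, exhibit the circle fibration over $\mathbb{P}^{n-1}\C$ (the paper uses the quotient fibration $S^1/\{\pm1\}\to\mathbb{P}^{2n-1}\R\to GL(n,\C)/P_n^\C$, while you work on the Hopf double cover $S^{2n-1}\to\mathbb{P}^{n-1}\C$ and carry the parity $\delta$ along, which is equivalent), and take the Fourier expansion along the fiber. Your homogeneity check $f_m(wX)=|w|^{-n-i\lambda}(w/|w|)^m f_m(X)$ makes explicit the identification of the $m$-th summand with $\pi_{i\lambda,m}^{GL(n,\C)}$ that the paper leaves implicit, and it does resolve the normalization worry you flagged: the critical degree $-n$ is the half-density line for both $GL(2n,\R)/P_{2n}$ and $GL(n,\C)/P_n^\C$, so the $L^2(S^{2n-1})$ inner product realizes both unitary structures simultaneously.
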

\begin{proof}
The group $G_2=GL(n,\mathbb C)$ acts transitively on
the real projective space $\mathbb P^{2n-1}\mathbb R$,
and the unique (open) orbit $\mathcal{O}_2 :=
\mathbb{P}^{2n-1}\R$
is represented as a homogeneous space $G_2/H_2$ where the isotropy group $H_2$ is of
the form
$$
H_2 \simeq (O(1) \times GL(n-1,\C))N_n^\C.
$$
Since $P_n^\C/H_2 \simeq S^1/\{\pm1\}$,
we have a $G_2$-equivariant fibration:
$$
S^1/\{\pm1\}\to \mathbb{P}^{2n-1}\R \to GL(n,\mathbb C)/P_n^{\mathbb C}.
$$
Further, if we denote by $\C_\delta$ the
one-dimensional representation of $H_2$
obtained as the following compositions:
$$
H_2 \to H_2/GL(n-1,\C)N_n^\C \xrightarrow{\delta} \C^\times,
$$
then the $G$-equivariant line bundle
$\mathcal{L}_{i\lambda,\delta} = G \times_P \C_{i\lambda,\delta}$ is
represented as a $G_2$-equivariant line bundle simply by
$$
\mathcal{L}_\delta :=
\mathcal{L}_{i\lambda,\delta} |_{\mathcal{O}_2}
\simeq GL(n,\C) \times_{H_2} \C_\delta.
$$
Therefore, we have an isomorphism as unitary representations of $G_2$:
$$
\mathcal H_{i\lambda,\delta}^{GL(2n,\R)}|_{G_2}\simeq
L^2(\mathcal O_2,\mathcal{L}_\delta).
$$
Taking the Fourier series expansion of
$L^2(\mathcal{O}_2,\mathcal{L}_\delta)$ along the fiber
$S^1/\{\pm1\}$,
we get the irreducible decomposition \eqref{eqn:GLnC}.
\end{proof}

An interesting feature of Theorem \ref{thm:3} is that the degenerate
 principal series representation $\pi_{i\lambda,\delta}^{GL(2n,\R)}$
 is discretely decomposable with respect to the restriction
$GL(2n,\R) \downarrow GL(n,\C)$.
We have seen this by finding explicit branching law, however,
discrete decomposability of the restriction
$\pi_{i\lambda,\delta}^{GL(2n,\R)}|_{GL(n,\C)}$ can
 be explained also by the general theory
\cite{xkAnn98} as follows:

Let $\mathfrak{t}$ be a Cartan subalgebra of $\mathfrak{o}(2n)$,
and we take a standard basis
$\{f_1,\dots,f_n\}$ in $i\mathfrak{t}^*$ such that the
dominant Weyl chamber for the disconnected group
$K=O(2n)$ is given as
$$
i \mathfrak{t}_+^*
= \{(\lambda_1,\dots,\lambda_n):
    \lambda_1 \ge \lambda_2 \ge \dots \ge \lambda_n \ge 0 \}.
$$
For $K_2:=G_2 \cap K \simeq U(n)$
the Hamiltonian action of $K$ on the cotangent bundle $T^*(K/K_2)$ has
 the momentum map
$T^*(K/K_2) \to i\mathfrak{k}^*$.
The intersection of its image
with the dominant Weyl chamber
$i\mathfrak{t}_+^*$ is given by
\begin{align*}
&i \mathfrak{t}_+^* \cap
\operatorname{Ad}^\vee(K)(i\mathfrak{k}_2^\perp)
\\
&=\left\{(\lambda_1,\ldots,\lambda_n)\in i\mathfrak
t^*_+:\,\lambda_{2i-1}=\lambda_i \ \mathrm{for}\,
 1\leq i\leq\left[\frac{n}{2}\right] \right\}.
\end{align*}
On the other hand,
it follows from Lemma \ref{lem:Kdeco} that
 the asymptotic $K$-support of $\pi_{i\lambda,\delta}$ amounts to
$$
AS_K(\pi_{i\lambda,\delta}) = \R_+(1,0,\dots,0).
$$
Hence, the triple $(G,G_2,\pi_{i\lambda,\delta})$ satisfies
\begin{equation}
\label{eqn:criterion}
AS_K(\pi_{i\lambda,\delta}) \cap
\operatorname{Ad}^\vee(K)(i\mathfrak{k}_2^\perp)
= \{0\}.
\end{equation}
This is nothing but the criterion for discrete decomposability of the
restriction of the unitary representation
$\pi_{i\lambda,\delta}|_{G_2}$
(\cite[Theorem 2,9]{xkAnn98}).

For $G_1=Sp(n,\R)$,
 we saw in Theorem \ref{thm:2}
 that the restriction $\pi_{i \lambda, \delta}^{GL(2n,\R)}|_{G_1}$
 stays irreducible.
Thus, this is another (obvious) example of discretely decomposable
 branching law.
We can see this fact directly from the observation
 that $G_1$ and $G_2$ have the same maximal compact subgroups,
$$
     (K_1:=)K \cap G_1 = K \cap G_2 (=:K_2).
$$
In fact, we get from \eqref{eqn:criterion}
$$
     AS_K(\pi_{i \lambda, \delta}) \cap \operatorname{Ad}^\vee(K)(i{\mathfrak {k}}_1^{\perp})=\{0\}.
$$
Therefore, the restriction $\pi_{i \lambda, \delta}|_{G_1}$
 is discretely decomposable, too.

\begin{rem}
In contrast to the restriction of the
quantization of elliptic orbits (equivalently, of Zuckerman's
$A_{\mathfrak q}(\lambda)$-modules), it is rare that the
restriction of the quantization of hyperbolic orbits (equivalently,
unitarily induced representations from real parabolic subgroups) is discretely
decomposable with respect to non-compact reductive subgroups.
Another discretely decomposable case was found  by Lee--Loke
in their study of the Jordan--H\"{o}lder series of a certain
degenerate principal series representations.
\end{rem}

\section{Branching law for $GL(N,\R)\downarrow GL(p,\R)\times
GL(q,\R)$} \label{sec:glpq}

\hfill\break Let $N=p+q$ $(p,q \ge 1)$, and consider a
subgroup $G_3 := GL(p,\R) \times GL(q,\R)$ in $G := GL(N,\R)$.
The restriction of $\pi_{i\lambda,\delta}^{GL(N,\R)}$ with respect to
the symmetric pair
\[
(G,G_3) = (GL(N,\R), GL(p,\R) \times GL(q,\R))
\]
is decomposed into the same family of degenerate principal series
representations of $G_3$:
\begin{thm}[Branching law $GL(p+q,\R)\downarrow GL(p,\R)\times GL(q,\R)$]
\label{thm:6.1}
$$
\pi_{i\lambda,\delta}^{GL(p+q,\R)}|_{G_3}\simeq
\sum_{\delta'=0,1}\int_\R^\oplus \pi_{i\lambda',\delta'}^{GL(p,\R)}\boxtimes \pi_{i(\lambda-\lambda'),
\delta-\delta'}^{GL(q,\R)}d\lambda'.
$$
\end{thm}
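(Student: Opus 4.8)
The plan is to carry out the Mackey-theoretic reduction outlined at the start of Section~\ref{sec:prf} for the pair $(G,G_3)=(GL(p+q,\R),\,GL(p,\R)\times GL(q,\R))$ and then to make the resulting decomposition explicit by a Mellin transform. First I would determine the $G_3$-orbits on the flag manifold $G/P_N=\mathbb P^{p+q-1}\R$. Writing $\R^{p+q}=\R^p\oplus\R^q$ with projections $\mathrm{pr}_1,\mathrm{pr}_2$, a line $\ell$ satisfies exactly one of $\mathrm{pr}_1(\ell)\neq 0\neq\mathrm{pr}_2(\ell)$, $\ell\subset\R^q$, or $\ell\subset\R^p$; the first condition cuts out a single open dense $G_3$-orbit $\mathcal O$, whose complement (a copy of $\mathbb P^{p-1}\R$ together with $\mathbb P^{q-1}\R$) has measure zero. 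Taking the base point $\ell_0=\R(e_1^{(p)},e_1^{(q)})$, its stabilizer is
\[
H_3\simeq\bigl(\R^\times\times GL(p-1,\R)\times GL(q-1,\R)\bigr)\ltimes(\R^{p-1}\times\R^{q-1}),
\]
with $\R^\times$ embedded diagonally in the two $GL(1,\R)$-parts; equivalently, $H_3$ is the subgroup of $P_p\times P_q$ — the product of the maximal parabolics of $GL(p,\R)$ and $GL(q,\R)$ attached to the partitions $p=1+(p-1)$ and $q=1+(q-1)$ — on which the two $GL(1,\R)$-components coincide. By the reduction of Section~\ref{sec:prf}, $\pi_{i\lambda,\delta}^{GL(p+q,\R)}|_{G_3}\simeq L^2(\mathcal O,\mathcal L_{i\lambda,\delta}|_{\mathcal O})\simeq\mathrm{Ind}_{H_3}^{G_3}\chi$, where $\chi$ is the unitary character of $H_3$ coming from $\chi_{i\lambda,\delta}$ together with the half-density twist; in particular it restricts to $t\mapsto|t|^{i\lambda}(\operatorname{sgn}t)^\delta$ on the diagonal $\R^\times$.

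Next I would exploit the fibration $\mathcal O\to\mathbb P^{p-1}\R\times\mathbb P^{q-1}\R$, $\ell\mapsto(\mathrm{pr}_1(\ell),\mathrm{pr}_2(\ell))$, which realizes $\mathcal O$ as a $G_3$-equivariant $\R^\times$-bundle over $(GL(p,\R)\times GL(q,\R))/(P_p\times P_q)$: on the fiber — the relative scale of the two projections — the structure group $P_p\times P_q$ acts by multiplicative translation through the homomorphism $(A,B)\mapsto b/a$ reading off the two $GL(1,\R)$-components $a,b$. Induction in stages then gives $\mathrm{Ind}_{H_3}^{G_3}\chi\simeq\mathrm{Ind}_{P_p\times P_q}^{GL(p,\R)\times GL(q,\R)}\bigl(\mathrm{Ind}_{H_3}^{P_p\times P_q}\chi\bigr)$, and $\mathrm{Ind}_{H_3}^{P_p\times P_q}\chi$ is carried by $L^2(\R^\times,\,dt/|t|)$ with $P_p\times P_q$ acting by that translation twisted by $\chi$.

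Then I would apply Plancherel for $\R^\times$ — the Mellin transform on $\R_+$ together with the two-point Fourier transform on $\{\pm1\}$, with unitary dual $\{t\mapsto|t|^{i\lambda'}(\operatorname{sgn}t)^{\delta'}:(\lambda',\delta')\in\R\times\Z/2\Z\}$ — which decomposes $\mathrm{Ind}_{H_3}^{P_p\times P_q}\chi$ as $\sum_{\delta'}\int_\R^\oplus(\chi_{i\lambda',\delta'}\boxtimes\chi_{i\lambda'',\delta''})\,d\lambda'$; matching the combined action of the diagonal $\R^\times$ against $\chi$ forces $\lambda''=\lambda-\lambda'$ and $\delta''=\delta-\delta'$. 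Since unitary induction commutes with direct integrals, inducing up to $GL(p,\R)\times GL(q,\R)$ yields
\[
\pi_{i\lambda,\delta}^{GL(p+q,\R)}\big|_{G_3}\simeq\sum_{\delta'\in\Z/2\Z}\int_\R^\oplus\pi_{i\lambda',\delta'}^{GL(p,\R)}\boxtimes\pi_{i(\lambda-\lambda'),\delta-\delta'}^{GL(q,\R)}\,d\lambda',
\]
which is the assertion.

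The main obstacle is the normalization bookkeeping. One has to verify that the half-density bundle $(\Lambda^{top}T^*\mathbb P^{p+q-1}\R)^{1/2}$ restricted to $\mathcal O$ splits, through the $\R^\times$-bundle structure, precisely into the half-density bundles of $GL(p,\R)/P_p$ and $GL(q,\R)/P_q$ together with the Haar measure on the fiber, so that the summands of the direct integral are the \emph{normalized} unitary degenerate principal series $\pi_{i\lambda',\delta'}^{GL(p,\R)}$ and $\pi_{i(\lambda-\lambda'),\delta-\delta'}^{GL(q,\R)}$ defined at the beginning of this section rather than unnormalized ones; equivalently, one must check that the $\rho$-shift attached to $P_N$ is compatible with the radial fiber coordinate so that the shifts of $P_p$ and $P_q$ come out correctly. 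Once this is settled, the rest is the classical Mackey machine and Plancherel for $\R^\times$.
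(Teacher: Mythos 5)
Your proposal is correct and follows essentially the same Mackey-theoretic route as the paper's (very brief) proof of Theorem~\ref{thm:6.1}: identify the open dense $G_3$-orbit $\mathcal O_3\subset\mathbb P^{p+q-1}\R$, exhibit it as an $\R^\times$-bundle over $(GL(p,\R)/P_p)\times(GL(q,\R)/P_q)$, and decompose via the Mellin transform along the fiber. You spell out the stabilizer $H_3$, the induction-in-stages step, and the matching of $(\lambda',\delta')$ with $(\lambda-\lambda',\delta-\delta')$ more explicitly than the paper does, and your flagged concern about the half-density/$\rho$-shift bookkeeping is legitimate but is likewise left implicit in the paper.
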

\begin{proof}[Outline of Proof]
The proof is similar to that of Theorem \ref{thm:3}.
The group $G_3=GL(p,\R)\times GL(q,\R)$ acts on
$\mathbb P^{p+q-1}\R$ with an open dense orbit $\mathcal O_3$ which
has a $G_3$-equivariant fibration
$$
\R^\times\to\mathcal O_3\to\left(GL(p,\R)/P_p)\times (GL(q,\R)/P_q\right).
$$

Hence, taking the Mellin transform by the $\R^\times$-action along the
fiber,
we get Theorem \ref{thm:6.1}.
\end{proof}

\section{Branching law for $GL(N,\R)\downarrow O(p,q)$}\label{sec:opq}
\hfill\break For $N=p+q$, we introduce the standard quadratic form
of signature $(p,q)$ by
\[
Q(x) :=
x_1^2+\dots+x_p^2-x_{p+1}^2-\dots-x_{p+q}^2
\quad\text{for $x \in \R^{p+q}$}.
\]
Let $G_4$ be the indefinite
orthogonal group defined by
\[
O(p,q) :=
\{ g \in GL(N,\R): Q(gx) = Q(x) \quad\text{for any $x\in\R^{p+q}$} \}.
\]
For $q=0$,
$G_4$ is nothing but a maximal compact subgroup $K=O(N)$ of $G$,
and the branching law
$\pi_{i\lambda,\delta}^{GL(N,\R)}|_{G_4}$ is so called the $K$-type
formula.

In order to describe the branching law $G \downarrow G_4$ for general
$p$ and $q$,
we introduce a family of irreducible unitary representations
of $G_4$,
to be denoted by $\pi_{+,\nu}^{O(p,q)}$ $(\nu\in A_+(p,q)$ below),
$\pi_{-,\nu}^{O(p,q)}$ $(\nu\in A_+(q,p))$,
and $\pi_{i\nu,\delta}^{O(p,q)}$ $(\nu\in\mathbb{R})$ as follows.
Let $\mathfrak{t}$ be a compact Cartan
subalgebra of $\mathfrak{g}_4$,
and we take a standard dual basis $\{ e_j\}$ of $\mathfrak{t}$  such that
the set of roots for
$\mathfrak{k}_4 := \mathfrak{o}(p) \oplus \mathfrak{o}(q)$
is given by
\begin{align*}
\Delta(\mathfrak{k}_4,\mathfrak{t}_4)
={}& \{ \pm(e_i\pm e_j) :
     1 \le i < j \le [\frac{p}{2}] \text{ \ or \ }
     [\frac{p}{2}]+1 \le i < j \le [\frac{p}{2}]+[\frac{q}{2}] \}
\\
&
  \cup
  \{ \pm e_i : 1 \le i \le [\frac{p}{2}] \}
  \ (p: \text{odd})
\\
&
  \cup
  \{ \pm e_i : [\frac{p}{2}]+1 \le i \le [\frac{p}{2}]+[\frac{q}{2}] \}
  \ (q: \text{odd}).
\end{align*}
Then, attached to the coadjoint orbits
$\operatorname{Ad}^\vee(G_4)(\nu e_i)$ for $\nu \in A_+(p,q)$
and
$\operatorname{Ad}^\vee(G_4)(\nu e_{[\frac{p}{2}]+1})$ for
$\nu \in A_+(q,p)$,
we can define unitary representations of $G_4$,
to be denoted by $\pi_{+,\nu}^{O(p,q)}$ and $\pi_{-,\nu}^{O(p,q)}$ as their
geometric quantizations.
These representations are
realized in Dolbeault cohomologies over the corresponding coadjoint
orbits endowed with $G_4$-invariant complex structures,
and their underlying $(\mathfrak{g}_{\mathbb{C}},K)$-modules are
obtained also as
 cohomologically induced representations from
characters of certain $\theta$-stable parabolic subalgebras
 (see \cite[\S5]{KobOr2} for details).

We normalize $\pi_{+,\nu}^{O(p,q)}$ such that
its infinitesimal character is given by
\[
(\nu, \frac{p+q}{2}-2, \frac{p+q}{2}-3, \dots, \frac{p+q}{2} -
 [\frac{p+q}{2}])
\]
in the Harish-Chandra parametrization.
The parameter set that we need for $\pi_{+,\nu}^{O(p,q)}$ is
$A_+(p,q):=A_+^0(p,q)\cup A_+^1(p,q)$ where
$$
A_+^\delta(p,q):=\left\{\begin{array}{ll}
                         \{\nu\in2\mathbb Z+\frac{p-q}2+1+\delta:\nu>0\},& (p>1,q\neq0); \\
                          \{\nu\in2\mathbb Z+\frac{p-q}2+1+\delta:\nu>\frac p2-1\},& (p>1,q=0); \\
                         \varnothing,&
                         (p=1,(q,\delta)\neq(0,1))\\
                         &
                         or\, (p=0);\\
                         \{\frac12\},& (p=1, (q,\delta)=(0,1)).
                       \end{array}\right.
$$
Notice that the identification $O(p,q)\simeq O(q,p)$ induces the equivalence $\pi_{-,\nu}^{O(p,q)}\simeq \pi_{+,\nu}^{O(q,p)}$.

For $p,q>0$ the group $G_4=O(p,q)$ is non-compact ant there are continuously many hyperbolic
coadjoint orbits.
Attached to (minimal) hyperbolic coadjoint orbits,
we can define another family of irreducible unitary representations of
$G_4$, to be denoted by $\pi_{i\nu,\delta}^{O(p,q)}$
for $\nu\in \R$ and $\delta \in \{0,1\}$.
Namely, let
 $\pi_{i\nu,\delta}^{O(p,q)}$ be the unitary representation of
$G_4$ induced from a unitary character $(i\nu,\delta)$ of a
maximal parabolic subgroup of $G_4$ whose
Levi part is $O(1,1)\times O(p-1,q-1)$.

We note that the Knapp--Stein intertwining operator gives a unitary
isomorphism
\[
\pi_{i\nu,\delta}^{O(p,q)} \simeq \pi_{-i\nu,\delta}^{O(p,q)}
\quad
(\nu \in \R, \  \delta = 0,1).
\]

\begin{thm}[Branching law $GL(p+q,\R)\downarrow O(p,q)$]
\label{thm:5}
$$
\pi_{i\lambda,\delta}^{GL(p+q,\R)}|_{O(p,q)}\simeq
\sideset{}{^\oplus}\sum_{\nu \in A_+^\delta(p,q)} \pi_{+,\nu}^{O(p,q)}\oplus
\sideset{}{^\oplus}\sum_{\nu \in A_+^\delta(q,p)}\pi_{-,\nu}^{O(p,q)}\oplus
2\int_{\R_+}^\oplus\pi_{i\nu,\delta}^{O(p,q)}d\nu.
$$
\end{thm}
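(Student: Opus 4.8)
The plan is to follow the general scheme of Section~\ref{sec:prf}: realize $\pi_{i\lambda,\delta}^{GL(N,\R)}$ on the $L^2$-sections of the line bundle $\mathcal L_{i\lambda,\delta}=G\times_{P_N}\C_{i\lambda,\delta}$ over $\mathbb P^{N-1}\R$ (twisted by half-densities), describe the $G_4=O(p,q)$-orbits on $\mathbb P^{N-1}\R$, and apply the Mackey machine to reduce the branching law to the Plancherel decomposition of $L^2$ over each open orbit. First I would determine the orbit structure. Since $O(p,q)$ acts transitively on every level set $\{x\in\R^{p+q}\setminus\{0\}:Q(x)=c\}$ for $c\neq0$ by Witt's theorem, the orbits on $\mathbb P^{N-1}\R$ are the two open sets $\mathcal O_+:=\{[x]:Q(x)>0\}$ and $\mathcal O_-:=\{[x]:Q(x)<0\}$, together with the projectivized null cone $\{Q=0\}$, which has measure zero (and $\mathcal O_-=\varnothing$ if $q=0$, $\mathcal O_+=\varnothing$ if $p=0$). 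The stabilizer in $G_4$ of a point of $\mathcal O_+$ is $(\Z/2\Z)\times O(p-1,q)$, the $\Z/2\Z$ being the reflection $x_0\mapsto-x_0$ on the chosen line, so $\mathcal O_+\simeq X(p,q)/\{\pm1\}$ where $X(p,q):=\{Q=1\}\simeq O(p,q)/O(p-1,q)$ is the pseudo-Riemannian space form, and likewise $\mathcal O_-\simeq X(q,p)/\{\pm1\}$. The Mackey subgroup theorem then gives
\[
\pi_{i\lambda,\delta}^{GL(N,\R)}\big|_{G_4}\simeq L^2(\mathcal O_+,\mathcal L_{i\lambda,\delta}|_{\mathcal O_+})\oplus L^2(\mathcal O_-,\mathcal L_{i\lambda,\delta}|_{\mathcal O_-}).
\]

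Next I would identify these summands $\lambda$-independently. Since $\mathcal O_\pm$ are open, the half-density bundle of $\mathbb P^{N-1}\R$ restricts to the half-density bundle of $\mathcal O_\pm$; and the character $\chi_{i\lambda,\delta}$ of $P_N$, restricted to the stabilizer $(\Z/2\Z)\times O(p-1,q)$, acts through the eigenvalue $\pm1$ on the line, on which $|\cdot|^{\lambda}\equiv1$, so $\mathcal L_{i\lambda,\delta}|_{\mathcal O_+}$ depends only on the parity $\delta$. Trivializing the half-density bundle by the $G_4$-invariant measure on $\mathcal O_+$ and pulling back along the double cover $X(p,q)\to\mathcal O_+$ (whose deck transformation is the action of the central element $-I_N\in O(p,q)$, which acts trivially on $\mathbb P^{N-1}\R$ and by the scalar $(-1)^\delta$ on $\mathcal L_{i\lambda,\delta}$) yields a $G_4$-equivariant unitary isomorphism
\[
L^2(\mathcal O_+,\mathcal L_{i\lambda,\delta}|_{\mathcal O_+})\simeq L^2(X(p,q))^{(-1)^\delta},
\]
the $(-1)^\delta$-eigenspace of the involution $f(x)\mapsto f(-x)$, and similarly for $\mathcal O_-$ with $X(q,p)$. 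In particular the restriction to $O(p,q)$ is independent of $\lambda$, exactly as the statement reflects.

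It then remains to insert the Plancherel formula for the hyperboloid $X(p,q)=O(p,q)/O(p-1,q)$ (Faraut, Strichartz, Molchanov, Rossmann; see also \cite{KobOr2}):
\[
L^2(X(p,q))\simeq\Bigl(\sideset{}{^\oplus}\sum_{\nu\in A_+(p,q)}\pi_{+,\nu}^{O(p,q)}\Bigr)\oplus\int_{\R_+}^\oplus\bigl(\pi_{i\nu,0}^{O(p,q)}\oplus\pi_{i\nu,1}^{O(p,q)}\bigr)\,d\nu,
\]
where the discrete part is the discrete series of the hyperboloid, realized exactly as the cohomologically induced modules $\pi_{+,\nu}^{O(p,q)}$ of this section, and the continuous part is the spherical unitary principal series $\pi_{i\nu,\delta'}^{O(p,q)}$. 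Splitting into $\pm1$-eigenspaces of $-I_N$: on $\pi_{+,\nu}^{O(p,q)}$ the element $-I_N$ acts by the sign distinguishing $A_+^0(p,q)$ from $A_+^1(p,q)$, so $L^2(X(p,q))^{(-1)^\delta}$ contributes $\sum_{\nu\in A_+^\delta(p,q)}^\oplus\pi_{+,\nu}^{O(p,q)}$, while in the continuous part it separates $\pi_{i\nu,0}^{O(p,q)}$ from $\pi_{i\nu,1}^{O(p,q)}$, leaving $\int_{\R_+}^\oplus\pi_{i\nu,\delta}^{O(p,q)}\,d\nu$. Doing the same for $X(q,p)$ — whose discrete series are $\pi_{+,\nu}^{O(q,p)}\simeq\pi_{-,\nu}^{O(p,q)}$ for $\nu\in A_+^\delta(q,p)$, and whose continuous spectrum is the \emph{same} family $\{\pi_{i\nu,\delta}^{O(p,q)}\}$ — and adding the two contributions produces the asserted decomposition, the factor $2$ in front of the integral arising precisely because the continuous spectrum is common to both hyperboloids.

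The main obstacle is this last step: pinning down the discrete part exactly, i.e.\ verifying that the hyperboloid discrete series occurring in $L^2(X(p,q))$ are indexed by $A_+(p,q)$ with the stated infinitesimal characters and coincide with the $\pi_{\pm,\nu}^{O(p,q)}$ defined via Dolbeault cohomology / $\theta$-stable parabolics, determining correctly the scalar by which $-I_N$ acts on each (hence the splitting $A_+=A_+^0\sqcup A_+^1$), and handling the degenerate ranges $p=1$, $q=0$, $q=1$, where the parameter set $A_+^\delta(p,q)$ changes form — in particular the truncation $\nu>\tfrac p2-1$ for $q=0$, where the whole decomposition must collapse to the $K$-type formula of Lemma~\ref{lem:Kdeco} for $O(N)$ acting on $L^2(S^{N-1})_\delta$, and the single exceptional point $\nu=\tfrac12$ for $(p,q,\delta)=(1,0,1)$. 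Fixing normalizations so that the continuous spectra of $X(p,q)$ and $X(q,p)$ are literally the same one-parameter family $\{\pi_{i\nu,\delta}^{O(p,q)}\}_{\nu>0}$, rather than two differently-parametrized halves, is the other delicate point, and is what makes the multiplicity equal to $2$ genuine rather than merely formal.
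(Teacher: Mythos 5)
Your proof follows essentially the same route as the paper: identify the two open $O(p,q)$-orbits on $\mathbb P^{p+q-1}\R$ by the sign of $Q$, use Mackey theory to reduce to $L^2$-sections over each open orbit, pass to the double-cover hyperboloids $X(p,q)_\pm$ and split by the parity $\delta$, and then insert the Faraut--Strichartz Plancherel formula (with the discrete series identified with the $\pi_{\pm,\nu}^{O(p,q)}$ as in \cite[\S5]{KobOr2}), obtaining the multiplicity $2$ from the common continuous spectrum of the two hyperboloids. The delicate points you flag at the end (matching of discrete-series parametrizations, the degenerate ranges, and the normalization making the two continuous spectra literally coincide) are exactly what the paper delegates to \eqref{eqn:G4bdl} and its citations, so your outline is a faithful reconstruction of the paper's argument.
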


Notice that in case when $q=0$ the latter two components of the above decomposition
do not occur and one gets the $K$-type formula $GL(n,\R)\downarrow O(n)$.

As a preparation of the proof,
we formalize the Plancherel formula on the hyperboloid from a modern
viewpoint of representation theory.

Let $X(p,q)_\pm$ be a hypersurface in $\R^{p+q}$ defined by
$$
X(p,q)_\pm:=\{x=(x',x'')\in\R^{p+q}\,:\,|x'|^2-|x''|^2=\pm1\}.
$$

We endow $X(p,q)_\pm$ with pseudo-Riemannian structures by restricting
$ds^2 = dx_1^2 + \dots + dx_p^2 - dx_{p+1}^2 - \dots - dx_{p+q}^2$
on $\R^{p+q}$.
Then,
$X(p,q)_{\pm}$ becomes a {\it{space form}}
 of pseudo-Riemannian manifolds
 in the sense that its sectional curvature $\kappa$
 is constant.
To be explicit,
 $X(p,q)_+$ has a pseudo-Riemannian structure
 of signature $(p-1,q)$
 with sectional curvature $\kappa \equiv 1$,
 whereas $X(p,q)_-$
 has a signature $(p,q-1)$
 with $\kappa \equiv -1$.
Clearly,
 $G_4$ acts on $X(p,q)_{\pm}$
 as isometries.

We denote by $L^2(X(p,q)_\pm)$ the Hilbert space consisting of
square integrable functions on $X(p,q)_\pm$ with respect to the
induced measure from $ds^2|_{X(p,q)}$.

The irreducible decomposition of the unitary representation of $G_4$ on
$L^2(X(p,q)_\pm)$ is equivalent to the spectral decomposition
of the Laplace--Beltrami operator on $X(p,q)_\pm$ with respect to the
$G_4$-invariant pseudo-Riemannian structures.
The latter viewpoint was established by Faraut \cite{xFaraut} and Strichartz \cite{xSt}.

As we saw in \cite[\S 5]{KobOr2},
 the discrete series representations on hyperboloids $X(p,q)_\pm$ are
 isomorphic to $\pi_{\pm,\nu}^{O(p,q)}$
 with parameter set $A_{\pm}(p,q)$.
\begin{eqnarray}\label{eqn:G4bdl}
  L^2(X(p,q)_+)_\delta &=& \sum_{\nu\in A_+^\delta(p,q)}\pi_{+,\nu}^{O(p,q)}\oplus\int_{\R_+}^\oplus\pi_{i\nu,\delta}^{O(p,q)}d\nu, \\
    L^2(X(p,q)_-)_\delta &=& \sum_{\nu\in A_+^\delta(q,p)}\pi_{-,\nu}^{O(p,q)}\oplus\int_{\R_+}^\oplus\pi_{i\nu,\delta}^{O(p,q)}d\nu.
\end{eqnarray}
Here we note that each irreducible decomposition is multiplicity
free, the continuous spectra in both decompositions are the same and
the discrete ones are distinct.

\begin{proof}[Proof of Theorem \ref{thm:5}]
According to the decomposition
\[
\R^{p+q} \underset{\text{dense}}{\supset}
\{ x \in \R^{p+q} : Q(x) > 0 \}
\cup
\{ x \in \R^{p+q} : Q(x) < 0 \},
\]
the group $G_4=O(p,q)$ acts on $\mathbb P^{p+q-1}\R$ with
two open orbits, denoted by $\mathcal O_4^+$
 and $\mathcal O_4^-$.
A distinguishing feature for $G_4$ is that these open $G_4$-orbits
 are reductive homogeneous
spaces.
To be explicit,
let $H_4^+$ and $H_4^-$ be the isotropy subgroups of $G_4$ at
$[e_1] \in \mathcal{O}_4^+$
and
$[e_{p+q}] \in \mathcal{O}_4^-$,
respectively,
where $\{e_j\}$ denotes the standard basis of $\R^{p+q}$.
Then we have
\begin{align*}
&\mathcal O_4^+\simeq G_4/H_4^+ = O(p,q)/(O(1)\times O(p-1,q)),
\\
&\mathcal O_4^-\simeq G_4/H_4^- = O(p,q)/(O(p,q-1)\times O(1)).
\end{align*}

Correspondingly,
the restriction of the line bundle
$\mathcal{L}_{i\lambda,\delta} = G \times_P \chi_{i\lambda,\delta}$
to the open sets $\mathcal{O}_4^\pm$ of the base space $G/P$ is given by
$$
G_4 \times_{H_4^\pm} \C_\delta,
$$
where $\C_\delta$ is a one-dimensional representation of $H_4^\pm$
defined by
\begin{alignat*}{2}
&O(1) \times O(p-1,q) \to \C^\times,
&(a,A) \mapsto a^\delta,
\\
&O(p,q-1) \times O(1) \to \C^\times,
\
&(B,b) \mapsto b^\delta,
\end{alignat*}
respectively. It is noteworthy that unlike the cases $G_2=GL(n,\C)$ and $G_3=GL(p,\R)\times GL(q,\R)$, the
continuous parameter $\lambda$ is not involved in (\ref{eqn:G4bdl}).

Since the union $\mathcal O_4^+\cup\mathcal O_4^-$ is open dense in
$\mathbb P^{p+q-1}\R$, we have a $G_4$-unitary equivalence
(independent of $\lambda$):
$$
\mathcal H_{i\lambda,\delta}^{GL(p+q,\R)}|_{G_4}\simeq
L^2(G_4 \times_{H_4} \C_\delta, \mathcal{O}_4^+)
\oplus L^2(G_4 \times_{H_4} \C_\delta, \mathcal{O}_4^-).
$$

Sections for the line bundle
$G_4 \times_{H_4^\pm} \C_\delta$ over $\mathcal{O}_4^\pm$ are
identified with even functions $(\delta=0)$ or odd functions
$(\delta=1)$ on
hyperboloids $X(p,q)_\pm$ because
$X(p,q)_\pm$ are double covering manifolds of
$\mathcal{O}_4^\pm$.

According to the parity of functions on the hyperboloid $X(p,q)_\pm$,
we decompose
$$
L^2(X(p,q)_\pm)
= L^2(X(p,q)_\pm)_0 \oplus L^2(X(p,q)_\pm)_1.
$$

Hence, we get Theorem \ref{thm:5}.
\end{proof}

\section{Tensor products $\mathrm{Met}^\vee \otimes
\mathrm{Met}$}\label{sec:75} \hfill\break The irreducible
decomposition
 of the tensor product of two representations
 is a special example
 of branching laws.
It is well-understood that the tensor product of the same
Segal--Shale--Weil representation 
(e.g.\ $\mathrm{Met} \otimes \mathrm{Met}$)
decomposes into a \emph{discrete} direct sum of lowest weight
representations of $Sp(n,\R)$ (see \cite{KV}).
In this section,
we prove:
\begin{thm}\label{thm:MetMet}
Let $\mathrm{Met}$ be the Segal--Shale--Weil representation of
the metaplectic group $Mp(n,\R)$, and $\mathrm{Met}^\vee$ its
contragredient representation. Then the tensor product
representation $\mathrm{Met}^\vee \otimes \mathrm{Met}$ is well-defined
as a representation of $Sp(n,\R)$, and decomposes into 
the direct integral of
irreducible unitary
representations as follows:
\begin{equation}\label{eqn:mettensor}
\mathrm{Met}^\vee \otimes \mathrm{Met}
\simeq \sum_{\delta=0,1}
\int_{\R_+}^\oplus 2\pi_{i\lambda,\delta}^{Sp(n,\R)} d\lambda.
\end{equation}
\end{thm}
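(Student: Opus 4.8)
The plan is to realize $\mathrm{Met}^{\vee}\otimes\mathrm{Met}$ on the Hilbert space of Hilbert--Schmidt operators on $L^{2}(\R^{n})$ and to transport it, by the Weyl operator calculus of Section~\ref{sec:3} (with $m$ replaced by $n$), to the natural linear action of $Sp(n,\R)$ on the phase space $L^{2}(\R^{2n})$; that representation is then decomposed by a radial Mellin transform.

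First I would note that, since $\mathrm{Met}$ is unitary on $L^{2}(\R^{n})$, the tensor product $\mathrm{Met}^{\vee}\otimes\mathrm{Met}$ is realized on $\overline{L^{2}(\R^{n})}\otimes L^{2}(\R^{n})\simeq\operatorname{HS}(L^{2}(\R^{n}),L^{2}(\R^{n}))$ by conjugation $T\mapsto\mathrm{Met}(g)\,T\,\mathrm{Met}(g)^{-1}$. As already observed in the proof of Fact~\ref{fact:1}, the kernel $\{\pm\mathrm{Id}\}$ of $Mp(n,\R)\to Sp(n,\R)$ acts on $\mathrm{Met}$ by a scalar of modulus one, hence trivially by such conjugations, so this representation descends to $Sp(n,\R)$; this is the asserted well-definedness. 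By \eqref{eqn:Op} the Weyl quantization $\mathrm{Op}\colon L^{2}(\R^{2n})\overset{\sim}{\to}\operatorname{HS}(L^{2}(\R^{n}),L^{2}(\R^{n}))$ is a unitary isomorphism, and the covariance relation \eqref{eqn:1.12} shows that $\mathrm{Op}$ intertwines $\mathrm{Met}^{\vee}\otimes\mathrm{Met}$ with the representation $\varpi$ of $Sp(n,\R)$ on $L^{2}(\R^{2n})$ given by $(\varpi(g)\mathfrak S)(X)=\mathfrak S(g^{-1}X)$, which is unitary because $Sp(n,\R)\subset SL(2n,\R)$ preserves Lebesgue measure. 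It remains to decompose $(\varpi,L^{2}(\R^{2n}))$.

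Next I would exploit that the scalar action of $\R^{\times}$ on $\R^{2n}$ commutes with $\varpi$. Decomposing first into the $(-1)$-eigenspaces $\delta=0,1$ of $x\mapsto-x$ and then applying the Mellin transform in the radial variable (write $x=r\omega$, $r>0$, $\omega\in S^{2n-1}$, $dx=r^{2n-1}\,dr\,d\omega$), one obtains a unitary isomorphism
\[
L^{2}(\R^{2n})\;\simeq\;\bigoplus_{\delta=0,1}\int_{\R}^{\oplus}V_{i\lambda,\delta}\,d\lambda,
\]
under which $\varpi$ corresponds to $\bigoplus_{\delta=0,1}\int_{\R}^{\oplus}\pi_{i\lambda,\delta}^{Sp(n,\R)}\,d\lambda$: each Mellin fibre is naturally identified with $L^{2}(S^{2n-1})_{\delta}=V_{i\lambda,\delta}$, the relevant homogeneous functions on $\R^{2n}\setminus\{0\}$ having degree $-n-i\lambda$ (the shift by $-n=-\tfrac12\dim\R^{2n}$ being the half-density normalization), and $\varpi$ acts on this fibre exactly as the $K$-picture of the normalized degenerate principal series $\pi_{i\lambda,\delta}^{Sp(n,\R)}$ of Section~\ref{sec:4}. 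Unitarity on each fibre is automatic, since both $\varpi$ and the Mellin--spherical decomposition are unitary.

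Finally I would fold the integral over $\R$ into an integral over $\R_{+}$ with multiplicity two: writing $\int_{\R}=\int_{\R_{+}}+\int_{\R_{-}}$, the substitution $\lambda\mapsto-\lambda$ together with the unitary equivalence $\pi_{i\lambda,\delta}^{Sp(n,\R)}\simeq\pi_{-i\lambda,\delta}^{Sp(n,\R)}$ furnished by the normalized Knapp--Stein operator \eqref{eqn:Spdual} gives $\int_{\R}^{\oplus}\pi_{i\lambda,\delta}^{Sp(n,\R)}\,d\lambda\simeq 2\int_{\R_{+}}^{\oplus}\pi_{i\lambda,\delta}^{Sp(n,\R)}\,d\lambda$, and summing over $\delta$ yields \eqref{eqn:mettensor}. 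I expect the point requiring most care to be the middle step: one must track the half-density factor so that the inner product on each Mellin fibre is genuinely the $L^{2}(S^{2n-1})$ one (so that the fibres are the \emph{unitarily normalized} $\pi_{i\lambda,\delta}^{Sp(n,\R)}$, not merely abstractly equivalent representations), and one must check the measurability in $\lambda$ needed to turn the last step into a bona fide unitary isomorphism of direct integrals — this uses that $\widetilde{\mathcal T}_{i\lambda,\delta}$, hence the identification $\pi_{i\lambda,\delta}\simeq\pi_{-i\lambda,\delta}$, depends real-analytically on $\lambda\in\R$ away from the poles of its normalizing factor. Everything else is routine.
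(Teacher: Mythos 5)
Your proposal is correct and follows essentially the same route as the paper: you pass from $\operatorname{HS}(L^{2}(\R^{n}),L^{2}(\R^{n}))$ to the phase space $L^{2}(\R^{2n})$ via the Weyl operator calculus and its covariance \eqref{eqn:1.12}, decompose the resulting geometric action by the radial Mellin transform into $\bigoplus_{\delta}\int_{\R}^{\oplus}V_{i\lambda,\delta}\,d\lambda$, and fold $\R$ onto $\R_{+}$ with multiplicity two using the Knapp--Stein equivalence \eqref{eqn:Spdual}. Your remarks on the descent to $Sp(n,\R)$, the half-density normalization, and measurability in $\lambda$ spell out points that the paper leaves implicit but do not change the argument.
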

\begin{rem} The branching formula in Theorem \ref{thm:MetMet} may be regarded as the dual pair correspondence
$O(1,1)\cdot Sp(n,\R)$ with respect to the Segal--Shale--Weil representation of $Mp(2n,\R)$. We note that the
Lie group $O(1,1)$ is non-abelian, and its finite dimensional irreducible unitary representations are generically of
dimension two, which corresponds the multiplicity two in the right-hand side of (\ref{eqn:mettensor}).
\end{rem}

\begin{proof}
By Fact \ref{fact:irred},
the Weyl operatorcalculus
\begin{equation}\label{eqn:Opiso}
\mathrm{Op} : L^2(\R^{2n})
\stackrel{\sim}{\to}
\operatorname{ HS}(L^2(\R^n), L^2(\R^n))
\end{equation}
gives an intertwining operator as unitary representations
of $Mp(n,\R)$.
We write $L^2(\R^n)^\vee$ for the dual Hilbert space,
and identify
\begin{equation}\label{eqn:HSdual}
\operatorname{ HS}(L^2(\R^n), L^2(\R^n))
\simeq L^2(\R^n)^\vee \widehat{\otimes} L^2(\R^n),
\end{equation}
where $\widehat{\otimes}$ denotes the completion of the tensor product
of Hilbert spaces.
Composing \eqref{eqn:Opiso} and \eqref{eqn:HSdual},
we see that the tensor product representation
$\mathrm{Met}^\vee \otimes \mathrm{Met}$ of $Mp(n,\R)$ is unitarily
equivalent to the regular representation on $L^2(\R^{2n})$.
This representation on the phase space $L^2(\R^{2n})$ is well-defined as a
representation of $Sp(n,\R)$.

We consider
the Mellin transform on $\R^{2n}$, which is defined as the Fourier transform
along the radial direction:
$$
f\to\frac1{4\pi}\int_{-\infty}^\infty |t|^{n-1+i\lambda}({\mathrm{ sgn}}t)^\delta f(tX)dt,
$$
with $\lambda\in\R,\delta=0,1,X\in\R^{2n}$.
Then, the Mellin transform gives a spectral
decomposition of the Hilbert space $L^2(\R^{2n})$.
Therefore, the phase space representation
$L^2(\R^{2n})$ is decomposed as a direct integral of Hilbert spaces:
\begin{equation}\label{eqn:Mell} L^2(\R^{2n})\simeq
\sum_{\delta=0,1}\int_\R^\oplus V_{i\lambda,\delta}\,
d\lambda.
\end{equation}
Since
$\pi_{i\lambda,\delta}^{Sp(n,\R)} \simeq \pi_{-i\lambda,\delta}^{Sp(n,\R)}$
(see \eqref{eqn:Spdual}), we get Theorem \ref{thm:MetMet}.
\end{proof}

\noindent \textbf{Acknowledgement.}
     The authors are grateful to the Institut des Hautes \'Etudes Scientifiques, the Institute for the Physics
     and Mathematics of the Universe of the Tokyo University, the Universities of {\AA}rhus and Reims where
     this work was done.

\bigskip
\footnotesize{ \noindent Addresses:
(TK) Graduate School of Mathematical Sciences, IPMU, The University of
 Tokyo,
3-8-1 Komaba, Meguro, Tokyo, 153-8914 Japan;
Institut des Hautes \'{E}tudes Scientifiques,
Bures-sur-Yvette, France (current address).
\\
(B\O ) Matematisk Institut, Byg.\,430, Ny Munkegade, 8000 Aarhus C,
Denmark.\\
(MP)
Laboratoire de Math\'ematiques, Universit\'e
de Reims, 51687 Reims, France.\bigskip

\noindent \texttt{{
 toshi@ms.u-tokyo.ac.jp,
 orsted@imf.au.dk,
 pevzner@univ-reims.fr.}}

}


\begin{thebibliography}{99}

\bibitem{BS}
E. P. van den Ban and H. Schlichtkrull,
The Plancherel decomposition for a reductive symmetric space.  I, II
\textit{Invent. Math.} \textbf{161} (2005), 453-566; 567--628.

\bibitem{Barb}
D. Barbasch,
 The unitary spherical spectrum for split classical groups.  \emph{J. Inst. Math. Jussieu} \textbf{9} (2010), 265--356.

\bibitem{B47}
V. Bargmann, Irreducible unitary representations of the Lorentz
group, \emph{Ann. of Math.} \textbf{48} (1947), pp. 568--640.

\bibitem{Be}
M. Berger,
\href{http://www.ams.org/mathscinet-getitem?mr=104763}{Les espaces sym\'{e}triques noncompacts},
\textit{Ann. Sci. \'{E}cole Norm. Sup.} (3) \textbf{74} (1957) 85--177.

\bibitem{Delorme}
P. Delorme,
\href{http://www.jstor.org/stable/121014}{Formule de Plancherel pour les espaces sym\'{e}triques r\'{e}ductifs},
\textit{Ann. of Math.} (2) \textbf{147} (1998), no. 2, 417--452.


\bibitem{CKOP} J.--L. Clerc, T. Kobayashi, M. Pevzner, B. \O rsted,
{Generalized Bernstein--Reznikov integrals},
\href{http://dx.doi.org/10.1007/s00208-010-0516-4}{\emph{Math. Ann.}, Doi:/10.1007/s00208-010-0516-4}, On line first, May 4, 2010.

\bibitem{E2} A. Erd\'elyi, W. Magnus, F. Oberhettinger, F.G.  Tricomi. \emph{Higher transcendental functions}. Vol. II.
  McGraw-Hill, New York, 1953.

\bibitem{E}  A. Erd\'elyi, W. Magnus, F. Oberhettinger, F.G.  Tricomi. \emph{Tables of integral transforms}. Vol. I. II,
  McGraw-Hill,
New York, 1954.

\bibitem{xFaraut}
   J. Faraut,
   Distributions sph\'{e}riques sur les espaces hyperboliques,
   \emph{J. Math. Pures Appl.} \textbf{58} (1979), 369--444.

\bibitem{Farmer} T.~A. Farmer, Irreducibility of certain degenerate principal series representations of ${\rm Sp}(n,\,R)$.
\emph{Proc. Amer. Math. Soc.}  \textbf{83} (1981), 411--420.

\bibitem{Foll} G. B. Folland, \emph{Harmonic analysis in Phase space}, Princeton University Press, Princeton, 1989.

\bibitem{xgansavin}
W.-T. Gan and G.~Savin, \emph{On minimal representations definitions and properties},
  Represent. Theory \textbf{9} (2005), 46--93.

\bibitem{H80} R. Howe, On the role of the Heisenberg group in harmonic
analysis. \emph{Bull. Amer. Math. Soc. (N.S.)} \textbf{3} (1980),
pp. 821--843.

\bibitem{HS} R. Howe,  S.-T. Lee,
Degenerate principal series representations of ${\rm GL}_n(\bold C)$ and ${\rm GL}_n(\bold R)$.
\emph{J. Funct. Anal.} \textbf{166} (1999), pp. 244--309.

\bibitem{KV} M. Kashiwara, M. Vergne, On the Segal--Shale--Weil
representations and harmonic polynomials, {\it Invent. Math.}
\textbf{44}, (1978), pp. 1--47.

\bibitem{xkAnn98}
T. Kobayashi,
Discrete decomposability of the restriction of $A_{\mathfrak{q}}(\lambda)$ with
respect to reductive subgroups II:
Micro-local analysis and asymptotic $K$-support,
\emph{Ann. Math.,} \textbf{147} (1998), 709--729.

\bibitem{K02}
T. Kobayashi, \href{http://uk.arxiv.org/abs/math.RT/0304326}{Branching problems of unitary representations},
\emph{Proc. of ICM 2002, Beijing,} vol. 2, 2002, pp. 615--627.

\bibitem{xsato} T. Kobayashi,
Algebraic analysis on minimal representations, to appear in \emph{Publ. Res. Inst. Math. Sci.}, Special Issue
in commemoration of the golden jubilee of algebraic analysis,
E-preprint : \href{http://uk.arxiv.org/abs/1001.0224
}{arXiv:1001.0224}.

\bibitem{KobOr2}T. Kobayashi, B. \O rsted,
\href{http://dx.doi.org/10.1016/S0001-8708(03)00012-4}{Analysis on the Minimal
Representation of $O(p,q)$-I},
\href{http://dx.doi.org/10.1016/S0001-8708(03)00013-6}{II},
\href{http://dx.doi.org/10.1016/S0001-8708(03)00014-8}{III.},
\emph{Adv. Math}.
\textbf{180}, (2003), pp. 486--512, 513--550, 551--595.

\bibitem{KM-intopq} T. Kobayashi and G. Mano,
{Integral formula of the unitary inversion operator for the minimal representation of $O(p,q)$},
\href{http://dx.doi.org/10.3792/pjaa.83.27}{\textit{Proc. Japan Acad. Ser. A} {\bf 83}} (2007), 27--31;
the full paper (to appear in the Mem. Amer. Math. Soc.) is available on
\href{http://uk.arxiv.org/abs/0712.1769}{arXiv:0712.1769}. 

\bibitem{KOPU}
  T. Kobayashi, B. \O rsted, M. Pevzner, and A. Unterberger,
{Composition formulas in the Weyl calculus},
\href{http://dx.doi.org/10.1016/j.jfa.2008.12.023}{\emph{J. Funct. Anal.} \textbf{257},
  (2009),
  pp. 948--991.}

\bibitem{Ko69}
B. Kostant, On the existence and irreducibility of certain series of
representations. \emph{Bull. Amer. Math. Soc.} \textbf{75} (1969),
pp. 627--642.



\bibitem{vN31}J. v. Neumann, Die Eindeutigkeit der Schr\"odingerschen Operatoren.
\emph{Math. Ann.} \textbf{104} (1931), pp. 570--578.

\bibitem{xSt}
   R. S. Strichartz,
   Harmonic analysis on hyperboloids,
   \emph{J. Funct. Anal.} \textbf{12} (1973), 341--383.

\bibitem{V90}
D. A. Vogan, Jr. and N. R. Wallach,
  Intertwining operators for real reductive groups. \emph{Adv. Math.} \textbf{82} (1990),
 pp. 203--243.
\end{thebibliography}
\end{document}